\newtheorem{thm}{Theorem}[section]
\newtheorem{notation}[thm]{Notation}
\begin{document}
\title[Anomalies of Dirac type operators on Euclidean space]{Anomalies of Dirac type operators on Euclidean space}

\author{Alan Carey}
\address{Mathematical Sciences Institute, Australian National University, Canberra, ACT, Australia}
\email{alan.carey@anu,edu,au}

\author{Harald Grosse}
\address{Department of Physics, University of Vienna, Boltzmanngasse, Vienna}
\email{harald.grosse@univie.ac.at}

\author{Jens Kaad}
\address{International School of Advanced Studies (SISSA),
Via Bonomea 265,
34136 Trieste,
Italy}
\email{jenskaad@hotmail.com}

%
%
%
\thanks{All authors thank the referees for comments that have improved the exposition.
The first author thanks the Alexander von Humboldt Stiftung and colleagues at the University of M\"unster and acknowledges the support of the Australian Research Council. The third author is supported by the Fondation Sciences Math\'ematiques de Paris (FSMP) and by a public grant overseen by the French National Research Agency (ANR) as part of the ``Investissements d'Avenir'' program (reference: ANR-10-LABX-0098). All authors are very appreciative of the support offered by the Erwin Schr\"odinger Institute where much of this research was carried out. We are also grateful for the advice and wisdom of Fritz Gesztesy while this investigation was proceeding.}
\subjclass[2010]{19K56; 58J37, 81T50, 53B05, 81T75}
\keywords{Index theory, Non-Fredholm operators, Homological index, Anomaly, Curvature, Local formula.}

\begin{abstract} We develop by example a type of index theory for non-Fredholm operators. A general framework using cyclic homology for this
notion of index was introduced in a separate article \cite{CaKa:TIH} where it may be seen to generalise earlier ideas of Carey-Pincus and Gesztesy-Simon on this problem.
Motivated by an example in two dimensions in \cite{BGGSS:WKS} we introduce in this paper a class of examples of Dirac type operators on $\rr^{2n}$ 
that provide non-trivial examples of our homological approach. Our examples may be seen as extending old ideas about the notion of anomaly 
introduced by physicists to handle topological terms in quantum action principles with an important difference, namely we are dealing with purely geometric
data that can be seen to arise from the continuous spectrum of our Dirac type operators.
\end{abstract}

\maketitle

\section{Introduction}
\subsection{Background}
In two interesting papers from the latter part of the previous century R. W. Carey and J. Pincus in \cite{CaPi:IOG}, and F. Gesztesy and B. Simon in \cite{GeSi:TIW} made a start on an `index
theory' for non-Fredholm operators. These two papers study different aspects of the problem using related techniques.  In both papers this `index' is expressed in terms of the Krein spectral shift function from scattering theory.\footnote{It is termed the Witten index in \cite{GeSi:TIW}
but in fact Witten considered a scaling limit of a generalised McKean-Singer formula, that is, an
expression in terms of heat semigroups, \cite{Wi:CSB}. It was Gesztesy-Simon who discovered the connection between Witten's idea and
the spectral shift function and hence to Carey-Pincus.}  A comprehensive list of papers on the Witten index may be found in \cite{CGPST:WSS}.

In a companion paper \cite{CaKa:TIH} we developed a formalism based on cyclic homology in which this previous work can be seen to fit as a special case.
We termed the spectral invariant constructed in \cite{CaKa:TIH} the `homological index'.
The Gesztesy-Simon index is related to a scaling limit of the homological index
which is a functional defined on the homology of a certain bicomplex constructed using techniques from 
cyclic theory \cite{Lod:CH}. 
However it is not at all clear whether there are interesting examples of this formalism except in the very simplest case studied in \cite{BGGSS:WKS}.

In noncommutative geometry, from the spectral point of view, we start with an unbounded selfadjoint operator $\C D$ densely defined on a Hilbert space $\C H$. Then we introduce a subalgebra of the algebra of bounded operators $\sL(\C H)$ on $\C H$ and probe the structure of this algebra using Kasparov theory (computing in particular the topological index as a pairing in $K$-theory,
\cite{CoMo:LIF}).

The homological formalism developed in our companion paper suggests a variant on the conventional approach via spectral triples. This variant is also suggested by the concrete picture developed in the early papers of Boll\'e et al, \cite{BGGSS:WKS}, and of Gesztesy and Simon, \cite{GeSi:TIW}. 
Loosely speaking, whereas the topological interpretation of the Fredholm index
is given by K-theory we view cyclic homology as the appropriate tool to replace K-theory in the case of non-Fredholm operators.
The main results presented here are about finding non-trivial examples of our homological index. 
These examples arise from the study of  Dirac type operators on the manifold $\mathbb R^{2n}$. 

Our motivation stems partly from the desire to understand higher dimensional examples of the operators considered in \cite{BGGSS:WKS}, and partly from
an ambition to probe the meaning of the homological index from the spectral point of view for Dirac type operators
on general non-compact manifolds, 
There is also motivation from magnetic Hamiltonians in dimensions greater than two and we will take this up elsewhere.

To explain our results we need some notation which we now present.

\subsection{Unbounded operators}\label{s:unb}
In this paper we will work with the following framework. First we double our Hilbert space setting $\C H^{(2)} := \C H \op \C H$, introducing a grading operator $\ga = \ma{cc}{1 & 0 \\ 0 & -1}$. We let $\dir_+$ be a closed densely defined operator on $\C H$ and form the odd selfadjoint operator $\dir := \ma{cc}{0 & \dir_- \\ \dir_+ & 0}$, where $\dir_- = (\dir_+)^*$. We will study a class of perturbations of $\dir$ of the form
\[
\C D := \ma{cc}{
0 & \dir_- + A^* \\
\dir_+ + A & 0
},
\]
where $A$ is a bounded operator on $\C H$. We want to study an invariant of $\C D$ by mapping to bounded operators using the Riesz map
\[
\C D \mapsto T = (\dir_+ + A)\big(1 + (\dir_-+A^*)(\dir_+ + A) \big)^{-1/2}.
\]
These bounded operators generate an algebra to which our homological theory \cite{CaKa:TIH} applies. To see how natural constraints arise on $T$ and its adjoint $T^*$ note that we have the identities
\[
\C D^2 = \ma{cc}{
(\dir_- + A^*)(\dir_+ + A) & 0 \\
0 & (\dir_+ + A)(\dir_- + A^*)
}
\]
and
\begin{equation}\label{eq:boures}
\begin{split}
1 - T T^* = ( 1 + \C D_+ \C D_- )^{-1}  \q \T{and} \q
1 - T^* T = ( 1 + \C D_- \C D_+ )^{-1},
\end{split}
\end{equation}
where $\C D_+ := \dir_+ + A$ and $\C D_- = (\C D_+)^* = \dir_- + A^*$.

We will be interested in the case where the following conditions hold:
\begin{assu}\label{a:unb}
\begin{enumerate}
\item The unbounded operator $\dir_+$ is normal, thus $\dir_+ \dir_- = \dir_- \dir_+$.
\item The bounded operators $A$ and $A^*$ have the domain of $\dir_+$ as an invariant subspace.
\item The sum of commutators $[\dir_+,A^*]  + [A,\dir_-] : \T{Dom}(\dir_-) \to \C H$ extends to a bounded operator on $\C H$.
\end{enumerate}
\end{assu}

Remark that the normality of $\dir_+$ entails that $\T{Dom}(\dir_+) = \T{Dom}(\dir_-)$ by \cite[Theorem 13.32]{Rud:FA}. 
%

Under the conditions in Assumption \ref{a:unb} we also obtain 
\[
\T{Dom}( \C D_+ \C D_- ) = \T{Dom}(\dir_+ \dir_-) = \T{Dom}(\dir_- \dir_+) = \T{Dom}( \C D_- \C D_+ )
\]
and furthermore that the identity
\[
\big[ (\dir_+ + A),(\dir_- + A^*)\big](\xi) = [\dir_+,A^*](\xi) - [\dir_-,A](\xi) + [A,A^*](\xi)
\]
holds for each vector $\xi \in \T{Dom}(\dir_+ \dir_-)$. This entails that $[\C D_+, \C D_-] : \T{Dom}(\dir_+\dir_-) \to \C H$ extends to a bounded operator on $\C H$.

Let $F \in \sL(\C H)$ denote the bounded extension of the commutator $[\C D_+, \C D_-]$. We may then impose compactness conditions on the difference
\[
(1 - T^* T) - (1 - T T^*) = ( 1 + \C D_+ \C D_- )^{-1} F ( 1 + \C D_- \C D_+ )^{-1}.
\]
In fact we will show in Section \ref{s:anodir} that there is a class of Dirac-type operators on $\rr^{2n}$, $n \in \nn$ which satisfy our hypotheses and lead to the condition 
\begin{equation}
\label{summ}
(1 - T^* T)^n - (1 - T T^*)^n \in \sL^1(\C H),
\end{equation}
where $\sL^1(\C H) \su \sL(\C H)$ denotes the ideal of trace class operators. The connection between the dimension of the underlying space $\rr^{2n}$ and the condition (\ref{summ})
is not evident in the earlier work \cite{Cal:AIO,BGGSS:WKS} but is natural from the point of view of spectral geometry.

The next definition is fundamental for the present text:

\begin{dfn}
Suppose that there exists an $n \in \nn$ such that $(1 - T^* T)^n - (1 - T T^*)^n$ is of trace class. By the \emph{homological index} of $T$ in degree $n$ we will understand the trace $\T{Tr}\big( (1 - T^* T)^n - (1 - T T^*)^n \big) \in \rr$. The homological index in degree $n$ is denoted by $\T{H-Ind}_n(T)$. When $T=\C D_+(1+\C D_-\C D_+)^{-1/2}$ we say that the homological index associated to $\C D_+$ is $\T{H-Ind}_n(T)$.
\end{dfn}

\subsection{Stability}
 One of the central problems to study concerns the stability or invariance properties of the homological index. More precisely, given a bounded operator $B : \C H \to \C H$, when can we say that the homological index (in degree $n$) associated to $\dir_+ + A + B$ exists and agrees with the homological index (in degree $n$) of $\dir_+ + A$? Since we are dealing with a genuinely non-compact situation, it would be naive to expect that such an invariance result holds for any bounded operator $B$, and this is one of the main differences between the homological index and the Fredholm index. Indeed it is known from the examples in 
\cite{BGGSS:WKS}, in connection with the Witten index, that in degree $n=1$ the homological index cannot be stable under general bounded perturbations $B$. In fact a stability result for the Witten index,  that gives the flavour of the complexity of the issue, is proved in \cite{GeSi:TIW}.

In the examples we are considering in this paper we must impose decay conditions at infinity on the bounded operator $B$, and this is naturally done using Schatten ideals. In our companion paper we gave a careful treatment of the invariance problem and, for the convenience of the reader, we state a simplified version of the main result here. We would like to emphasize though that the invariance result proved in the companion paper is more general (and for this reason we use distinct notation there) and allows us to deal with unbounded perturbations as well. 

\begin{thm}
Suppose that
\[
(1 + \C D_+ \C D_-)^{-j} B (1 + \C D_- \C D_+)^{-k-1/2} \, \, , \, \, (1 + \C D_- \C D_+)^{-j} B^* (1 + \C D_+ \C D_-)^{-k-1/2} \in \sL^{n/(j + k)}(\C H)
\]
for all $j,k \in \{0,\ldots,n\}$ with $1 \leq j + k \leq n$. Suppose furthermore that there exists an $\ep \in (0,1/2)$ such that
\[
B \cd (1 + \C D_- \C D_+)^{-n-1/2 + \ep} \, \, , \, \, B^* \cd (1 + \C D_+ \C D_-)^{-n-1/2 + \ep} \in \sL^1(\C H)
\]

Then the homological index of $T_B := (\C D_+ + B)(1 + (\C D_- + B^*)(\C D_+ + B))^{-1/2}$ exists in degree $n \in \nn$ if and only if the homological index of $T := \C D_+(1 + \C D_- \C D_+)^{-1/2}$ exists in degree $n \in \nn$. And in this case we have that
\[
\T{H-Ind}_n(T_B) = \T{H-Ind}_n(T)
\]
\end{thm}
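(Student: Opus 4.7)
The plan is to prove the invariance via a one-parameter interpolation. Define $B_t := tB$ for $t \in [0,1]$ and set $\C D_+^t := \C D_+ + tB$, $\C D_-^t := \C D_- + tB^*$, together with the resolvents $P_t := (1 + \C D_-^t \C D_+^t)^{-1}$ and $Q_t := (1 + \C D_+^t \C D_-^t)^{-1}$. Writing $T_t$ for the Riesz transform of $\C D_+^t$, one has $1 - T_t^* T_t = P_t$ and $1 - T_t T_t^* = Q_t$ by \eqref{eq:boures}, so the goal is to show that $f(t) := \T{Tr}(P_t^n - Q_t^n)$ is well-defined (finite) for all $t \in [0,1]$ and constant in $t$; then $f(0) = f(1)$ is the desired conclusion.

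The first step is to establish that $P_t^n - Q_t^n$ is of trace class for every $t \in [0,1]$ whenever this is so at $t = 0$. This is done by iterating the second resolvent identity
\[
P_t - P_0 = -P_t \bigl( tB^* \C D_+ + t\C D_- B + t^2 B^* B \bigr) P_0,
\]
and the analogous expression for $Q_t - Q_0$, so that $P_t^n - P_0^n$ and $Q_t^n - Q_0^n$ decompose into polynomials in $B$, $B^*$ sandwiched by resolvents and powers of $\C D_\pm$. The two mixed Schatten hypotheses of the theorem provide, via H\"older's inequality in Schatten ideals, the necessary trace-class estimates for every summand; the endpoint $\ep$-regularization condition controls the terms in which $B$ appears together with the lowest available power of resolvent.

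Next I would differentiate $f$. The intertwining identities $\C D_+^t P_t = Q_t \C D_+^t$ and $P_t \C D_-^t = \C D_-^t Q_t$, valid on the common domains by Assumption~\ref{a:unb} and the functional calculus for $\C D_\pm^t \C D_\mp^t$, give
\[
\dot P_t = -P_t \bigl( B^* \C D_+^t + \C D_-^t B \bigr) P_t,
\qquad
\dot Q_t = -Q_t \bigl( B \C D_-^t + \C D_+^t B^* \bigr) Q_t.
\]
Combining with $\tfrac{d}{dt}\T{Tr}(P_t^n - Q_t^n) = n\T{Tr}(P_t^{n-1}\dot P_t) - n\T{Tr}(Q_t^{n-1}\dot Q_t)$ and cyclically pushing $\C D_+^t$ and $\C D_-^t$ through the trace with the help of the intertwiners $P_t^{n-1}\C D_-^t = \C D_-^t Q_t^{n-1}$ and $\C D_+^t P_t^{n-1} = Q_t^{n-1}\C D_+^t$, every term in the first trace cancels a matching term in the second. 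Hence $f'(t) \equiv 0$.

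The main obstacle will be the Schatten-class bookkeeping: each cyclic permutation requires independent trace-class membership of the resulting product, and absorbing an unbounded $\C D_\pm^t$ into a neighbouring resolvent must be done carefully. I would handle this by factoring each $P_t$ (resp.\ $Q_t$) into fractional powers, using that $\C D_+^t P_t^{1/2}$ and $\C D_-^t Q_t^{1/2}$ are bounded with uniform norm in $t$, and then applying H\"older in Schatten classes exactly as permitted by the hypotheses on $B$. Uniformity in $t$ of these bounds ensures $f'$ is integrable, so integrating from $0$ to $1$ yields $\T{H-Ind}_n(T_B) = \T{H-Ind}_n(T)$. Equivalence of existence in the two cases is automatic from the fact that the decomposition used in the first step exhibits $P_t^n - Q_t^n$ as $P_0^n - Q_0^n$ plus an unconditionally trace-class remainder.
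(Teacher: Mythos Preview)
The paper does not actually prove this theorem: immediately after the statement it writes ``For a proof of this invariance result we refer to \cite[Theorem 8.1]{CaKa:TIH}.'' From the surrounding discussion (``the homological index is defined as a functional on certain homology groups of the algebra generated by $T$ and $T^*$. It is thus stable under perturbations that do not change the homology class''), the argument in the companion paper is homological: one realises $\T{H-Ind}_n$ as a pairing with a class in cyclic homology and then checks that the Schatten hypotheses on $B$ force $T$ and $T_B$ to represent the same class. Your approach is completely different --- a direct analytic interpolation in $t$ followed by a vanishing-of-the-derivative argument using the intertwiners $\C D_+^t P_t = Q_t \C D_+^t$ and $P_t \C D_-^t = \C D_-^t Q_t$. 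The algebraic cancellation you describe is correct, and the strategy is a reasonable alternative to the homological route.

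There is, however, a genuine gap you should flag more sharply. You write $\tfrac{d}{dt}\T{Tr}(P_t^n - Q_t^n) = n\,\T{Tr}(P_t^{n-1}\dot P_t) - n\,\T{Tr}(Q_t^{n-1}\dot Q_t)$ and then manipulate each trace separately. But only the \emph{difference} $P_t^n - Q_t^n$ is assumed trace class; nothing in the hypotheses guarantees that $P_t^{n-1}\dot P_t$ or $Q_t^{n-1}\dot Q_t$ is individually in $\sL^1(\C H)$. Concretely, a term like $P_0^{i+1} B^* \C D_+ P_0^{n-i}$ needs $B^*$ sandwiched between powers of $P_0$ on both sides, whereas the hypotheses only control $Q_0^j B P_0^{k+1/2}$ and its adjoint; converting one $P_0$ factor to a $Q_0$ factor costs exactly the unbounded $\C D_+$ you are trying to absorb, and the half-power bookkeeping does not obviously close. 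The cyclic-permutation step has the same issue: cycling an unbounded $\C D_\pm^t$ through the trace is only legitimate once each of the rearranged products is already known to be trace class. To make your argument rigorous you would need either to keep the $P$- and $Q$-terms paired throughout the computation (differentiating and cancelling at the level of the difference before ever splitting the trace), or to prove an auxiliary lemma showing that the stated hypotheses do imply the finer Schatten memberships you use. This is likely where the cyclic-homology machinery of \cite{CaKa:TIH} earns its keep: it packages exactly this kind of pairing-level cancellation without ever needing the individual summands to be trace class.
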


For a proof of this invariance result we refer to \cite[Theorem 8.1]{CaKa:TIH}. We remark also that there is, in  \cite[Section 5]{CaKa:TIH}, a discussion of the homotopy invariance
of the homological index.

The reader may be puzzled by the complexity of the statement of this preceding theorem. This can be understood in 
part in terms of the topological meaning of the homological index. In the notation of the previous theorem we know from the examples in \cite{BGGSS:WKS} that the 
homological index cannot be invariant under compact perturbations of $T$. This is the first indication that, unlike the Fredholm index, the homological index is not directly associated to topological K-theory.  In our companion paper we show that the homological index is defined as a functional on certain homology groups of the algebra generated by $T$ and $T^*$. It is thus stable under perturbations that do not change the homology class and the hypotheses of the previous theorem stem from this fact.

On general manifolds we expect to see that the homological index depends not only on the topology,
but in fact is a spectral invariant that is much finer,  depending also on the geometry of the underlying space. This geometric dependence may be seen from our main result of this paper  (which we explain in the next subsection)
although we remark that the precise nature of the information that can be obtained from the homological index
remains to be determined.

In our companion paper, where the homological index was introduced, we did not address the question of non-triviality.
The main results of this paper can be seen to be a proof that the homological index is not trivial.
Most importantly, the homological viewpoint allows the systematic development of a higher dimensional theory into which the early work \cite{BGGSS:WKS, CaPi:IOG, GeSi:TIW} fits as the lowest degree case.

We now describe our main method.

\subsection{Scaling limits of the homological index}
Let $\dir_+ : \T{Dom}(\dir_+) \to \C H$ be a closed unbounded operator and let $A \in \sL(\C H)$ be a bounded operator. We will impose the conditions of Assumption \ref{a:unb} throughout this section. 

Let $\la \in (0,\infty)$ and consider the scaled operator $\la^{-1/2} \C D_+$. Let 
$$T_\la := \la^{-1/2}\C D_+(1 + \la^{-1} \C D_- \C D_+)^{-1/2}.$$ As in \eqref{eq:boures} we obtain 
\[
1 - T_\la T_\la^* = \la \cd (\la + \C D_+ \C D_-)^{-1} \q \T{and} \q 
1 - T_\la^* T_\la = \la \cd (\la + \C D_- \C D_+)^{-1}.
\]
We may then consider the difference of powers:
\[
(1- T_\la^* T_\la)^n - (1-T_\la T_\la^*)^n = \la^n \cd \big( (\la + \C D_- \C D_+)^{-n} - (\la + \C D_+ \C D_-)^{-n} \big)
\]

\emph{We are, in this paper, interested in the scaling limits of the homological index as the parameter $\la$ goes to either zero or infinity.} 
\vspace{3pt}

The scaling limit at zero is called the `Witten index' in the case $n=1$ in \cite{BGGSS:WKS} and \cite{GeSi:TIW}. What is known about this limit
can be found in these papers, the Carey-Pincus article and the recent preprint \cite{CGPST:WSS}. In particular we remark that when the operator $\C D$ is 
Fredholm and the scaling limit at zero exists then these articles demonstrate that it coincides with the Fredholm index.
The arguments in those papers can be adapted to establish an analogous result about the scaling limit
 at zero for $n>1$ in the Fredholm case. However in the non-Fredholm case little is known for $n>1$ and this is the subject of a future investigation.
Here our main focus will lie in the scaling limit at infinity. We will use the following terminology motivated by the examples in \cite{BGGSS:WKS, Cal:AIO}.

\begin{dfn}
Suppose that there exists an $n \in \nn$ such that $(1- T_\la^* T_\la)^n - (1-T_\la T_\la^*)^n \in \sL^1(\C H)$ for all $\la \in (0,\infty)$. The \emph{anomaly} in degree $n$ of the perturbed operator $\C D$ is then defined as the scaling limit 
\[
\T{Ano}_n(\C D) := \lim_{\la \to \infty} \T{H-Ind}_n(T_\la)
\]
whenever this limit exists.
\end{dfn}

If we were in a situation where the McKean-Singer formula  for the Fredholm index of $\C D_+$ was defined then the usual approach to obtaining a local formula for the index is to calculate the small time asymptotics of the trace of the heat kernel. This local formula is often referred to as the `anomaly'
in the physics literature.
It  corresponds in our situation to calculating the scaling limit at infinity. 

The use of the terminology `anomaly' can also be traced back to Callias \cite{Cal:AIO} where a resolvent  type formula, (that partly motivated the approach of  \cite{GeSi:TIW})  is introduced for the index of certain Fredholm  operators (of Dirac type) on $\mathbb R^{2n-1}$.
His index formula spurred many developments in the late 1970s and 80s. It differs from ours in two ways, first it applies to the odd dimensional case and second, the constraint (\ref{summ}) is absent: his formula depends only on the difference of resolvents. On the other hand the resolvent expansion methods of Callias  are analogous to our approach for calculating the anomaly.    

We remark that the result of Seeley which Callias states as Theorem 1 in \cite{Cal:AIO} may be used to demonstrate that the examples of Dirac type operators introduced in this paper are not Fredholm for general choices of connection.
Despite the fact that  we are dealing with the non-Fredholm situation, we are able to use the scaling limit at infinity 
(that is, the anomaly) as a means of
obtaining information on the non-triviality of the homological index.

{\it In the present paper we prove two main results.
First, we find conditions which ensure the existence of the homological index for a fixed degree $n \in \nn$. This will be carried out in Section \ref{ss:resana} and is stated as Theorem \ref{t:tradifpow}.

Second, and most importantly,  in Theorem \ref{t:locano} we give a {\it local formula} for the anomaly in terms of an integral of a $2n$-form on ${\mathbb R}^{2n}$ that is constructed as a function of the curvature of the connection coming from our Dirac type operator. The expression for the anomaly
may be seen to be non-zero by the calculation presented in Section \ref{nont}.}

The attentive reader will have noticed that our definitions of the anomaly, the Witten index (the scaling limit of the homological index at zero), as well as the homological index all depend on the degree $n \in \nn$. The precise way in which these spectral  invariants depend on the degree is yet to be determined. What can be inferred from the examples in this paper is that there exists a ``critical value'' of the degree such that the invariants are not well-defined in degrees strictly less than the critical value. For the Dirac-type operators, we consider in this paper, this critical value is exactly half the dimension of the underlying smooth manifold. Our computation of the anomaly for the examples in question shows moreover that the anomaly is independent of the degree above this critical value. We do not know whether this kind of behaviour holds in general for all of the spectral invariants introduced in this paper.
 
We remark that we impose very strong conditions on the connections we consider to reduce the number and complexity of the estimates needed to compute the anomaly.  It is, of course, important to establish the weakest conditions under which the anomaly exists, however, we have to leave this to another place.

\subsection{Motivation from physics examples}

We have indicated above the motivation that comes from the work of Boll\'e {\it et al},  Callias, Gesztesy-Simon and Witten.
The index problem posed in \cite{Cal:AIO} deals with non-self-adjoint operators on odd dimensional spaces. It fits into the framework of our companion paper \cite{CaKa:TIH} but we do not discuss this kind of example here. 

A second source of motivating examples is the study of 
Dirac operators coupled to connections in odd dimensions (in particular, the three dimensional case is of interest in condensed matter theory).
As we now explain these can be incorporated into our framework here.  In odd dimensions the fundamental invariant
is spectral flow. It is often claimed for models in condensed matter theory that spectral flow is a physically relevant invariant but
in many cases it is not obvious that one is dealing with Fredhom operators. On the other hand spectral flow is known to be related directly to Krein's spectral shift function and both can be expressed in terms of an index for a Dirac-type operator in even dimensions
as explained in \cite{GLMST}. We outline the argument. 

Starting from a pair of self-adjoint operators, denoted $A_{\pm}$  (in general unbounded), on a Hilbert space $\mathcal H$,  we introduce a
 `flow parameter' $s\in \mathbb R$ and a path of self adjoint operators  $A(s)$ with $\lim_{s\to\pm\infty} A(s) =A_\pm$ (where the limit is taken in an appropriate topology).
We then introduce a new operator acting on the `big Hilbert space'
$L^2(\mathbb R, \mathcal H\oplus \mathcal H)$ of the form
\[
\C D := \ma{cc}{
0 & \partial_s  + A(s) \\
-\partial_s +A(s)  & 0
} = \ma{cc}{0 & \C D_- \\ \C D_+ & 0}
\]
Under various assumptions one can relate the index of $\C D_+$ to the spectral flow for the path $A(s)$ when the latter path consists of Fredholm operators.
When the path $A(s), s\in \mathbb R$ consists of Dirac type operators on $\mathbb R^{2n-1}$ then $\C D$ is  of Dirac type on $\mathbb R^{2n}$. One may then ask (in the non-Fredholm case) the question of whether the
homological index for $T=\C D_+(1+\C D_-\C D_+)^{-1/2}$ is related to some generalised spectral flow, or spectral shift function, for the pair $A_{\pm}$.
 This kind of problem arises when $A_\pm$ are magnetic Dirac type operators. So for example if $A_\pm$ are   operators in three dimensions then   the degree $n=2$ homological index comes into play for $\C D_+$.
  We are currently investigating this application and will report on the outcome elsewhere.

\section{Preliminaries}

\subsection{Notation}
Let $\dir_+ : \T{Dom}(\dir_+) \to \C H$ be a closed unbounded operator and let $A \in \sL(\C H)$ be a bounded operator. The conditions in Assumption \ref{a:unb} will be in effect.

\begin{notation}\label{n:Iresana}
We will use the following notation for various unbounded operators related to $A$ and $\dir_+$:
\[
\C D_+ := \dir_+ + A : \T{Dom}(\dir_+) \to \C H, \quad\quad
\C D_- := (\C D_+)^* = \dir_- + A^* : \T{Dom}(\dir_-) \to \C H \]
\[
\De := \dir_+ \dir_- = \dir_- \dir_+ : \T{Dom}(\De) \to \C H, \]
\[
\De_1 := \C D_+ \C D_- : \T{Dom}(\De) \to \C H, \quad
\De_2 := \C D_- \C D_+ : \T{Dom}(\De) \to \C H
\]
The bounded extension of the commutator $[\C D_+,\C D_-] : \T{Dom}(\De) \to \C H$ is denoted by $F \in \sL(\C H)$.
\end{notation}

Remark that we have the expressions
\[
\begin{split}
\De_1 - \De & = A \dir_- + \dir_+ A^* + A A^* : \T{Dom}(\De) \to \C H, \\
\De_2 - \De & = A^* \dir_+ + \dir_- A + A^* A : \T{Dom}(\De) \to \C H.
\end{split}
\]

\begin{notation}\label{n:IIresana}
Introduce the following notation
\[
\begin{split}
V_1 := A \dir_- +  \dir_+ A^* + A A^* : \T{Dom}(\dir_+) \to \C H, \\
V_2 := A^* \dir_+ + \dir_- A + A^* A : \T{Dom}(\dir_+) \to \C H
\end{split}
\]
for the extensions to $\T{Dom}(\dir_+)$ of $\De_1 - \De$ and $\De_2 - \De$, respectively. Furthermore, we let
\[
Y_1(\la) := V_1 \cd (\la + \De)^{-1} : \C H \to \C H \q Y_2(\la) := V_2 \cd (\la+\De)^{-1} : \C H \to \C H
\]
for all $\la \in (0,\infty)$.
\end{notation}

It is important to notice that the linear operators $Y_1(\la) : \C H \to \C H$ and $Y_2(\la) : \C H \to \C H$ are unbounded operators in general. This is due to the fact that the commutator $[\dir_-,A]$ does \emph{not} always extend to a bounded operator. This condition is not even satisfied for the perturbations of the Dirac operator on $\rr^{2n}$ which we consider in Section \ref{s:anodir}. We will however impose differentiability conditions on $A$ later on which imply that $Y_1(\la)$ and $Y_2(\la)$ are bounded. These differentiability conditions will be the subject of the next subsection.

\subsection{Quantum differentiability}\label{ss:quadif}
While this subsection concerns classical pseudo-differential operators we anticipate noncommutative applications and hence frame the definitions accordingly.
In the first part of this  subsection we will consider a selfadjoint positive unbounded operator $\De : \T{Dom}(\De) \to \C H$ on the Hilbert space $\C H$. The functional calculus for selfadjoint unbounded operators provides us with a scale of dense subspaces $\C H^s := \T{Dom}(\De^{s/2}) \su \C H$, $s \in [0,\infty)$. Each subspace $\C H^s$ becomes a Hilbert space in its own right when equipped with the inner product
\[
\inn{\cd,\cd}_s : (\xi,\eta) \mapsto \inn{\xi,\eta} + \inn{\De^{s/2}\xi,\De^{s/2}\eta} \q \xi,\eta \in \C H^s.
\]
Notice also that $\C H^s \su \C H^r$ whenever $s \geq r$. This scale of Hilbert spaces plays the role of Sobolev spaces in noncommutative geometry.

Following Connes and Moscovici, \cite[Appendix B]{CoMo:LIF}, we consider the $*$-subalgebra $\T{OP}^0 \su \sL(\C H)$ consisting of bounded operators $T$ for which
\begin{enumerate}
\item The domain of $\De^{n/2}$ is an invariant subspace for all $n \in \nn$.
\item The iterated commutator $\de^n(T) : \T{Dom}(\De^{n/2}) \to \C H$ extends to a bounded operator on $\C H$ for all $n \in \nn$.
\end{enumerate}
Here $\de^n(T) : \T{Dom}(\De^{n/2}) \to \C H$ is defined recursively by $\de(T) = [\De^{1/2},T]$ and $\de^n(T) = [\De^{1/2},\de^{n-1}(T)]$.

For each $\la \in (0,\infty)$ and each $m \in \nn_0$ we define the algebra automorphism 
\[
\si_\la^m : \T{OP}^0 \to \T{OP}^0 \q \si_\la^m(T) := (\la + \De)^m T (\la + \De)^{-m}.
\]
Notice that the inverse $\si_\la^{-m} : \T{OP}^0 \to \T{OP}^0$ of $\si_\la^m$ can be defined by $\si_\la^{-m}(T) := \big( \si_\la^m(T^*) \big)^*$. The element $\si_\la^{-m}(T) \in \T{OP}^0$ then agrees with the bounded extension of $(\la + \De)^{-m} T (\la + \De)^m : \T{Dom}(\De^m) \to \C H$.

We record the following:

\begin{lemma}\label{l:difact}
Let $m \in \zz$. For each $T \in \T{OP}^0$ we have the estimate
\[
\| \si^m_\la(T) - T\| = O(\la^{-1/2}) \q \T{as }\, \la \to \infty
\]
\end{lemma}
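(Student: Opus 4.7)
The plan is to reduce first to the case $m=1$, handle that by direct functional calculus, and then induct on $|m|$, using the adjoint trick to pass between positive and negative exponents.

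\medskip

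For $m=1$ I would start from the identity
\[
\sigma_\lambda(T) - T = (\lambda + \Delta)T(\lambda+\Delta)^{-1} - T = [\Delta,T](\lambda+\Delta)^{-1},
\]
valid on $\T{Dom}(\Delta)$ and extending by boundedness. The point is now to decompose $[\Delta,T]$ in terms of the bounded operators $\delta(T), \delta^2(T) \in \T{OP}^0$. Using that $[\Delta^{1/2},\delta(T)] = \delta^2(T)$ one gets
\[
[\Delta,T] = \Delta^{1/2}\delta(T) + \delta(T)\Delta^{1/2} = 2\,\delta(T)\,\Delta^{1/2} + \delta^2(T),
\]
and therefore
\[
\sigma_\lambda(T) - T = 2\,\delta(T)\,\Delta^{1/2}(\lambda+\Delta)^{-1} + \delta^2(T)(\lambda+\Delta)^{-1}.
\]
The functional calculus for the selfadjoint positive $\Delta$ gives the elementary estimates $\|\Delta^{1/2}(\lambda+\Delta)^{-1}\| \leq \tfrac{1}{2}\lambda^{-1/2}$ (maximising $t^{1/2}/(\lambda+t)$ at $t=\lambda$) and $\|(\lambda+\Delta)^{-1}\| \leq \lambda^{-1}$. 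Together with the boundedness of $\delta(T)$ and $\delta^2(T)$ this yields the bound $\|\sigma_\lambda(T) - T\| = O(\lambda^{-1/2})$.

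\medskip

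For general $m \geq 1$ I would proceed by induction, writing
\[
\sigma_\lambda^m(T) - T = \sigma_\lambda^{m-1}\!\bigl(\sigma_\lambda(T) - T\bigr) + \bigl(\sigma_\lambda^{m-1}(T) - T\bigr).
\]
The second summand is $O(\lambda^{-1/2})$ by the induction hypothesis applied to $T$. For the first summand I use that $\sigma_\lambda^{m-1}$, being conjugation by a function of $\Delta$, leaves the factors $\Delta^{1/2}(\lambda+\Delta)^{-1}$ and $(\lambda+\Delta)^{-1}$ invariant, so the decomposition from the $m=1$ step gives
\[
\sigma_\lambda^{m-1}\!\bigl(\sigma_\lambda(T)-T\bigr)
= 2\,\sigma_\lambda^{m-1}(\delta(T))\,\Delta^{1/2}(\lambda+\Delta)^{-1} + \sigma_\lambda^{m-1}(\delta^2(T))\,(\lambda+\Delta)^{-1}.
\]
Since $\T{OP}^0$ is closed under $\delta$, both $\delta(T)$ and $\delta^2(T)$ lie in $\T{OP}^0$, and the induction hypothesis applied to them shows that $\|\sigma_\lambda^{m-1}(\delta(T))\|$ and $\|\sigma_\lambda^{m-1}(\delta^2(T))\|$ stay uniformly bounded for $\lambda$ large. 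The same functional calculus estimates as before then give $O(\lambda^{-1/2})$.

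\medskip

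The case $m \leq -1$ is obtained from the identity $\sigma_\lambda^{-|m|}(T) = \bigl(\sigma_\lambda^{|m|}(T^*)\bigr)^*$, noting that $T^* \in \T{OP}^0$ since $\T{OP}^0$ is a $*$-subalgebra, and that the operator norm is preserved under taking adjoints; this reduces the claim to the already established positive case. The main conceptual obstacle is organising the induction so that one can apply the hypothesis not only to $T$ but also to $\delta(T)$ and $\delta^2(T)$; this is harmless provided one uses from the outset that $\T{OP}^0$ is stable under $\delta$, which is built into its definition, but it is essential to keep the induction machinery running.
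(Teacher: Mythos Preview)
Your proof is correct and follows essentially the same route as the paper: induction on $m\geq 0$ via the identity $\sigma_\lambda^{m}(T)-T = \sigma_\lambda^{m-1}(\sigma_\lambda(T)-T) + (\sigma_\lambda^{m-1}(T)-T)$, together with the adjoint trick for negative $m$. The only cosmetic difference is that the paper packages the commutator as $[\Delta,T] = R(T)\,(1+\Delta)^{1/2}$ with $R(T)\in\T{OP}^0$ and uses $\|(1+\Delta)^{1/2}(\lambda+\Delta)^{-1}\|=O(\lambda^{-1/2})$, whereas you expand $[\Delta,T]=2\,\delta(T)\,\Delta^{1/2}+\delta^2(T)$ explicitly; both lead to the same estimate by the same mechanism.
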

\begin{proof}
It is enough to prove the assertion for $m \in \nn_0$. The easiest proof then runs by induction. The statement is trivial for $m= 0$. Thus suppose that it holds for some $m_0 \in \nn_0$.

Let $T \in \T{OP}^0$. Then
\[
\begin{split}
\si^{m_0 + 1}_\la(T) & = (\la + \De)^{m_0 + 1} T (\la + \De)^{-m_0 - 1} \\
& = \si^{m_0}_\la(T) + (\la + \De)^{m_0} [\De,T](\la + \De)^{-m_0 - 1}.
\end{split}
\]
The fact that $T \in \T{OP}^0$ implies that $[\De,T](1+ \De)^{-1/2}$ extends to an element $R(T) \in \T{OP}^0$. We thus have that
\[
(\la + \De)^{m_0} [\De,T] (\la + \De)^{-m_0 - 1} = \si_\la^{m_0}\big(  R(T) \big) \cd (1+\De)^{1/2} \cd (\la + \De)^{-1}.
\]

Since $\|(1+\De)^{1/2} \cd (\la + \De)^{-1}\| = O(\la^{-1/2})$ we obtain the estimate $\|\si^{m_0 + 1}_\la(T) - T\| = O(\la^{-1/2})$ by applying the induction hypothesis to $T$ and $R(T)$.
\end{proof}

We will now return to the setup introduced in the beginning of subsection \ref{s:unb}. The conditions on the closed operator $\dir_+$ and the bounded operator $A$ which are stated in Assumption \ref{a:unb} will in particular be in effect. We will then consider the unbounded operators $Y_1(\la) = V_1 (\la + \De)^{-1}$ and $Y_2(\la) = V_2(\la + \De)^{-1}$ introduced in Notation \ref{n:IIresana}.

\begin{lemma}\label{l:disanaest}
Suppose that $A \in \T{OP}^0$. Then $Y_1(\la), Y_2(\la) \in \T{OP}^0$ for all $\la \in (0,\infty)$. Furthermore, we have the estimate
\[
\|Y_1(\la)\| = \|Y_2(\la)\| = O(\la^{-1/2}) \q \T{as }\, \la \to \infty
\]
\end{lemma}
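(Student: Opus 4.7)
I plan to split $Y_1(\la) = V_1 (\la + \De)^{-1}$ according to the three summands in $V_1 = A \dir_- + \dir_+ A^* + AA^*$ and bound each contribution separately, then establish the $\T{OP}^0$-membership by inducting on commutators with $\De^{1/2}$. The easy terms go as follows: factoring $A \dir_- (\la + \De)^{-1} = A \cd \dir_-(\la + \De)^{-1/2} \cd (\la + \De)^{-1/2}$, normality of $\dir_+$ gives $\|\dir_-(\la + \De)^{-1/2}\|^2 = \|\De (\la + \De)^{-1}\| \leq 1$, so this term has norm at most $\|A\| \la^{-1/2}$, while the bounded term $AA^*(\la+\De)^{-1}$ is trivially $O(\la^{-1})$.

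The delicate summand is $\dir_+ A^* (\la+\De)^{-1}$, because the commutator $[\dir_+, A^*]$ is not assumed to be bounded on its own. I again split the resolvent as $(\la + \De)^{-1/2}(\la + \De)^{-1/2}$ and aim to bound $\dir_+ A^* (\la + \De)^{-1/2}$ uniformly in $\la$. The crucial observation is that normality of $\dir_+$ yields the isometric identity $\|\dir_+ \eta\| = \|\De^{1/2} \eta\|$ on $\T{Dom}(\dir_+) = \T{Dom}(\De^{1/2})$, so
\[
\|\dir_+ A^* (\la + \De)^{-1/2}\| = \|\De^{1/2} A^* (\la + \De)^{-1/2}\|.
\]
Now $\De^{1/2} A^* = A^* \De^{1/2} + \de(A^*)$ on $\T{Dom}(\De^{1/2})$, and $A \in \T{OP}^0$ makes $\de(A^*)$ bounded, so the right-hand side is bounded by $\|A^*\| + \|\de(A^*)\|\la^{-1/2}$, which is bounded uniformly as $\la \to \infty$. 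Multiplying by the remaining $(\la + \De)^{-1/2}$ yields $\|\dir_+ A^* (\la + \De)^{-1}\| = O(\la^{-1/2})$, which combined with the previous estimates gives $\|Y_1(\la)\| = O(\la^{-1/2})$. A symmetric argument exchanging the roles of $A, \dir_+$ with $A^*, \dir_-$ handles $Y_2(\la)$.

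For the assertion $Y_1(\la) \in \T{OP}^0$ I induct on the order of the iterated commutator. Invariance of each domain $\T{Dom}(\De^{n/2})$ is immediate from the structure of $Y_1(\la) = V_1 (\la + \De)^{-1}$: the resolvent maps $\T{Dom}(\De^{n/2})$ into $\T{Dom}(\De^{n/2+1})$, $A$ and $A^*$ preserve such domains by $\T{OP}^0$-invariance, and $\dir_\pm$ drop the Sobolev degree by only one half via the normality calculus. For the commutators, since $\De^{1/2}$ commutes with $(\la + \De)^{-1}$, we have $\de(Y_1(\la)) = [\De^{1/2}, V_1](\la + \De)^{-1}$, and the Leibniz rule combined with $[\De^{1/2}, \dir_\pm] = 0$ (which follows from normality via $\De^{1/2} = |\dir_\pm|$) yields
\[
[\De^{1/2}, V_1] = \de(A) \dir_- + \dir_+ \de(A^*) + \de(AA^*).
\]
This has precisely the same structural form as $V_1$, now with $A, A^*, AA^*$ replaced by bounded operators $\de(A), \de(A^*), \de(AA^*)$, all still lying in $\T{OP}^0$ by its closure under $\de$. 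Hence the norm estimate of the previous paragraph applies verbatim at each inductive step, giving $\|\de^n(Y_1(\la))\| < \infty$ for every $n$. The main obstacle throughout is the unbounded summand $\dir_+ A^*$: the combination of normality (which lets us measure $\dir_+$ by $\De^{1/2}$) and the $\T{OP}^0$-differentiability of $A$ (which controls $\de(A^*)$) is precisely what makes the bound go through despite the absence of a bounded commutator $[\dir_+, A^*]$.
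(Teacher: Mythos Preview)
Your argument is correct but takes a different route from the paper's. The paper handles the delicate term $\dir_+ A^*(\la+\De)^{-1}$ by pulling the full resolvent through $A^*$ via the conjugation automorphism: writing $\dir_+ A^*(\la+\De)^{-1} = F_+(\la)\,\si_\la(A^*)$ with $F_+(\la) := \dir_+(\la+\De)^{-1}$ and $\si_\la(A^*) := (\la+\De)A^*(\la+\De)^{-1}$. This exhibits $Y_1(\la)$ directly as a finite sum of products of $\T{OP}^0$-elements, so the $\T{OP}^0$-membership drops out of the algebra structure with no induction, and the norm bound follows from $\|F_\pm(\la)\| = O(\la^{-1/2})$ together with the earlier lemma $\|\si_\la(A^*) - A^*\| = O(\la^{-1/2})$. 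Your approach instead replaces $\dir_+$ by $\De^{1/2}$ norm-wise (via normality) and commutes $\De^{1/2}$ past $A^*$ using only the boundedness of $\de(A^*)$; for $\T{OP}^0$-membership you then run a separate induction on $\de^n$, observing that $\de$ preserves the structural form $B_1\dir_- + \dir_+ B_2 + B_3$. Your route is more hands-on and avoids invoking the $\si_\la$-lemma altogether, at the cost of the extra inductive bookkeeping and the implicit appeal to the standard (but not entirely trivial) fact that $\de$ maps $\T{OP}^0$ to itself. The paper's route is slicker because it packages both assertions into a single algebraic identity.
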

\begin{proof}
Let $\la \in (0,\infty)$. To ease the notation, let $F_+(\la) := \dir_+ (\la + \De)^{-1}$ and $F_-(\la) := \dir_-(\la+\De)^{-1}$. Compute as follows,
\[
\begin{split}
V_1 \cd (\la + \De)^{-1} & = A \dir_- \cd (\la+\De)^{-1} + \dir_+ A^* \cd (\la + \De)^{-1} + AA^* \cd (\la+\De)^{-1} \\
& = A \cd F_-(\la) + AA^* \cd (\la+\De)^{-1} + F_+(\la) \cd \si_\la(A^*).
\end{split}
\]
Since each of the bounded operators $A \cd F_-(\la)$, $AA^* \cd (\la+\De)^{-1}$ and $F_+(\la) \cd \si_\la(A^*)$ lies in $\T{OP}^0$ this shows that $Y_1(\la) \in \T{OP}^0$ as well.

The desired estimate on $\|Y_1(\la)\|$ follows from the above identities and Lemma \ref{l:disanaest} since $\|F_+(\la)\| = \|F_-(\la)\| = O(\la^{-1/2})$ as $\la \to \infty$.

A similar argument proves the claims on $Y_2(\la)$ as well.
\end{proof}

Recall that a unital $*$-subalgebra $\sA$ of a unital $C^*$-algebra $A$ is said to be \emph{closed under
the  holomorphic functional calculus} when for each $x \in \sA$ and each holomorphic function $f$ on the spectrum of $x$ in $A$ we have that $f(x) \in \sA$.

The following result is well known but we prove it here for lack of an adequate reference:

\begin{lemma}\label{l:resqua}
The unital $*$-subalgebra $\T{OP}^0 \su \sL(\C H)$ is closed under the holomorphic functional calculus.
\end{lemma}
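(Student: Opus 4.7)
The strategy is to establish spectral invariance of $\T{OP}^0$ in $\sL(\C H)$: every $T \in \T{OP}^0$ invertible in $\sL(\C H)$ has its inverse $S := T^{-1}$ in $\T{OP}^0$. Combined with the fact that $\T{OP}^0$ is a Fr\'echet $*$-algebra in the seminorms $T \mapsto \|\de^n(T)\|$ (with continuous inclusion into $\sL(\C H)$), this yields closure under holomorphic functional calculus via the standard Riesz--Dunford Cauchy integral. So I would fix $T \in \T{OP}^0$ invertible in $\sL(\C H)$ and verify conditions (1) and (2) of the definition of $\T{OP}^0$ for $S$ separately.

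The first and harder task is domain invariance: $S$ preserves $\T{Dom}(\De^{n/2})$ for every $n \in \nn$. Writing $D := \De^{1/2}$, I would first prove the analog of Lemma \ref{l:difact} with $D$ in place of $\De$, namely that for $T \in \T{OP}^0$ the composition
\[
\tilde\si_\la^n(T) := (\la+D)^n T (\la+D)^{-n}
\]
extends to a bounded operator satisfying $\|\tilde\si_\la^n(T) - T\| = O(\la^{-1})$ as $\la \to \infty$. This is a straightforward induction on $n$ using the telescoping relation $\tilde\si_\la^{n+1}(T) - \tilde\si_\la^n(T) = \tilde\si_\la^n(\de(T))(\la+D)^{-1}$ together with the bound $\|(\la+D)^{-1}\| \leq 1/\la$ and $\de$-stability of $\T{OP}^0$. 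For $\la$ large, $\tilde\si_\la^n(T)$ is then invertible in $\sL(\C H)$ by a Neumann series, with inverse $R_\la$. Writing $\tilde\si_\la^n(T) R_\la = 1$, multiplying on the left by $(\la+D)^{-n}$ and on the right by $S$ yields the bounded-operator identity $(\la+D)^{-n} R_\la = S(\la+D)^{-n}$. Since the left-hand side has range inside $\T{Dom}(D^n) = \T{Dom}(\De^{n/2})$ and $(\la+D)^{-n}$ maps $\C H$ bijectively onto $\T{Dom}(\De^{n/2})$, this forces $S$ to preserve $\T{Dom}(\De^{n/2})$.

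Given domain invariance, boundedness of $\de^n(S)$ follows by induction on $n$. For $n=1$, the identity
\[
\de(S)\xi \;=\; \De^{1/2} S\xi - S\De^{1/2}\xi \;=\; -S(\De^{1/2}T - T\De^{1/2})S\xi \;=\; -S\de(T)S\xi, \qquad \xi \in \T{Dom}(D),
\]
is legitimate precisely because $S\xi \in \T{Dom}(D)$, and the right-hand side is manifestly bounded. For the inductive step, applying the Leibniz rule to $\de^n(ST) = \de^n(1) = 0$ gives
\[
\de^n(S)\, T \;=\; -\sum_{k=0}^{n-1}\binom{n}{k}\de^k(S)\,\de^{n-k}(T),
\]
and right-multiplying by $S$ expresses $\de^n(S)$ as a finite sum of products of operators that are bounded by the induction hypothesis together with $T \in \T{OP}^0$. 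Hence $S \in \T{OP}^0$, completing the proof.

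The principal obstacle is the domain invariance step: since $\T{OP}^0$ is defined in terms of both integer and half-integer Sobolev scales, Lemma \ref{l:difact} is not directly applicable, and one must first develop the $(\la+\De^{1/2})$-version of that lemma and then extract the intertwining identity $(\la+D)^{-n} R_\la = S(\la+D)^{-n}$ by careful manipulation of unbounded composites. All subsequent algebra is routine once the domains are under control.
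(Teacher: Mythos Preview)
Your argument is correct, but it takes a genuinely different route from the paper. The paper does not prove spectral invariance by hand; instead it introduces the finite-level algebras
\[
\T{OP}^0_N := \big\{ T \in \sL(\C H) \ \big|\ T(\T{Dom}(\De^{n/2})) \subseteq \T{Dom}(\De^{n/2}) \text{ and } \de^n(T) \text{ bounded for } n = 1,\ldots,N \big\},
\]
observes that each $\T{OP}^0_N$ is a unital Banach $*$-algebra under the norm $\|T\|_N = \sum_{n=0}^N \frac{1}{n!}\|\de^n(T)\|$, and then appeals to a result of Blackadar--Cuntz on differential Banach algebra norms (\cite[Proposition 3.12]{BlCu:DNS}) to conclude that each $\T{OP}^0_N$ is closed under holomorphic functional calculus. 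Since $\T{OP}^0 = \bigcap_{N=1}^\infty \T{OP}^0_N$, the lemma follows.

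So the paper outsources the spectral-invariance work to an external reference on differential seminorms, whereas you do it from scratch: you establish domain invariance of $T^{-1}$ via the conjugation estimate $\|\tilde\si_\la^n(T) - T\| = O(\la^{-1})$ and the intertwining identity $(\la+D)^{-n} R_\la = S(\la+D)^{-n}$, and then control $\de^n(T^{-1})$ inductively through Leibniz. Your approach is longer but entirely self-contained, and it makes the domain bookkeeping explicit --- an aspect that the paper's citation of \cite{BlCu:DNS} leaves opaque (indeed, the Blackadar--Cuntz argument is purely algebraic in the commutator norms and does not by itself address condition (1) of the definition). The paper's approach is shorter and more conceptual, exhibiting $\T{OP}^0$ as a projective limit of Banach algebras of the kind treated by general theory. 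Both reach the same conclusion; your main additional labour is exactly the half-integer analogue of Lemma~\ref{l:difact} that you identify as the principal obstacle, and your sketch of that step is sound.
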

\begin{proof}
For each $N \in \nn$, let $\T{OP}^0_N$ denote the unital $*$-algebra consisting of the bounded operators $T$ such that
\begin{enumerate}
\item The domain of $\De^{n/2}$ is an invariant subspace for all $n \in \{1,\ldots,N\}$.
\item The iterated commutator $\de^n(T) : \T{Dom}(\De^{n/2}) \to \C H$ extends to a bounded operator on $\C H$ for all $n \in \{1,\ldots,N\}$.
\end{enumerate}
The unital $*$-algebra $\T{OP}^0_N$ becomes a unital Banach $*$-algebra when equipped with the norm
\[
\|\cd\|_N : T \mapsto \sum_{n=0}^N \frac{1}{n!} \|\de^n(T)\|.
\]
It then follows from \cite[Proposition 3.12]{BlCu:DNS} that $\T{OP}^0_N$ is closed under holomorphic functional calculus. But this proves the lemma since $\T{OP}^0 = \bigcap_{N=1}^\infty \T{OP}^0_N$.
\end{proof}

The next statement is now a consequence of Lemma \ref{l:disanaest} and Lemma \ref{l:resqua}.

\begin{prop}\label{p:anainv}
Suppose that $A \in \T{OP}^0$. Then there exists a constant $C > 0$ such that $\big(1 + Y_1(\la) \big)^{-1}$ and $\big( 1 + Y_2(\la)\big)^{-1}$ are well-defined elements in $\T{OP}^0$ for all $\la \geq C$.
\end{prop}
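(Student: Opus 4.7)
The plan is to combine the norm estimate from Lemma \ref{l:disanaest} with the holomorphic functional calculus closure from Lemma \ref{l:resqua}. Since $\|Y_i(\la)\| = O(\la^{-1/2})$ as $\la \to \infty$ for $i \in \{1,2\}$, I can choose $C > 0$ large enough so that $\|Y_1(\la)\|, \|Y_2(\la)\| \leq 1/2$ (say) for all $\la \geq C$.

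With this choice of $C$, the spectrum of $Y_i(\la)$ inside the unital $C^*$-algebra $\sL(\C H)$ is contained in the closed disk of radius $1/2$ around $0$; in particular $-1$ lies outside the spectrum. The function $f(z) := (1+z)^{-1}$ is therefore holomorphic on an open neighbourhood of $\T{Spec}(Y_i(\la))$, and evaluating the usual holomorphic functional calculus yields the bounded inverse $(1 + Y_i(\la))^{-1} \in \sL(\C H)$ (this of course also follows from the Neumann series $\sum_{k \geq 0} (-Y_i(\la))^k$, which converges in operator norm).

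By Lemma \ref{l:disanaest} we already know that $Y_i(\la) \in \T{OP}^0$. Since $\T{OP}^0 \su \sL(\C H)$ is closed under the holomorphic functional calculus by Lemma \ref{l:resqua}, it follows that $f(Y_i(\la)) = (1+Y_i(\la))^{-1}$ lies in $\T{OP}^0$ for each $\la \geq C$, which is precisely the statement of the proposition.

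There is essentially no serious obstacle here: the proposition is really a packaging of the two preceding lemmas. The only thing to be careful about is to make $C$ uniform for both $Y_1$ and $Y_2$ (taking the maximum of the two constants produced by Lemma \ref{l:disanaest} suffices) and to observe that the spectrum of $Y_i(\la)$ in $\sL(\C H)$ coincides with its spectrum in $\T{OP}^0$, which is automatic once $\T{OP}^0$ is closed under holomorphic functional calculus; no further quantitative estimates beyond the $O(\la^{-1/2})$ bound are needed.
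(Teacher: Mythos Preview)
Your proof is correct and matches the paper's approach exactly: the paper does not give a detailed argument but simply states that the proposition is a consequence of Lemma~\ref{l:disanaest} and Lemma~\ref{l:resqua}, which is precisely the combination you spell out. Your extra remarks about uniformity of $C$ and equality of spectra are fine but not needed beyond what you wrote.
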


\section{Resolvent expansions}\label{ss:resana}
The object of this section is to find verifiable criteria which imply that the homological index exists. We will thus be studying expressions of the form
\[
(z + \De_2)^{-n} - (z+ \De_1)^{-n}
\]
for a natural number $n \in \nn$ and a complex number $z \in \cc\sem (-\infty,0]$. The notation used throughout will be the one introduced in Notation \ref{n:Iresana} and Notation \ref{n:IIresana}. Furthermore, the conditions in Assumption \ref{a:unb} will be in effect.

The main idea is to approximate the operators $\De_1 = \C D_+ \C D_-$ and $\De_2 = \C D_- \C D_+$ by the Laplacian $\De = \dir_+ \dir_-$ using perturbation theoretic techniques.

 We start by stating some preliminary lemmas. The proofs are well-known and will not be repeated here.

\begin{lemma}\label{l:difpowexp}
Let $n \in \nn$ and let $z \in \cc\sem (-\infty,0]$. Then
\[
(z + \De_2)^{-n} - (z + \De_1)^{-n} = \sum_{i=0}^{n-1} (z + \De_1)^{-i-1} \cd F \cd (z + \De_2)^{-(n-i)}.
\]
\end{lemma}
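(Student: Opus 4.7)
The plan is to deduce the identity from a standard resolvent-telescoping argument, using as its only algebraic input the defining property of $F$. First, I would recall that $F$ is by construction the bounded extension of $[\C D_+,\C D_-] = \De_1 - \De_2 : \T{Dom}(\De) \to \C H$, where the domains of $\De_1$ and $\De_2$ both coincide with $\T{Dom}(\De)$. Consequently, for any $z \in \cc\sem (-\infty,0]$ and any $\xi \in \T{Dom}(\De)$,
\[
F\xi = (z+\De_1)\xi - (z+\De_2)\xi.
\]

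Next I would introduce the interpolating family
\[
a_i := (z+\De_1)^{-i}(z+\De_2)^{-(n-i)} \in \sL(\C H), \q i = 0, 1, \ldots, n,
\]
so that $a_0 = (z+\De_2)^{-n}$ and $a_n = (z+\De_1)^{-n}$. The trivial telescoping identity
\[
(z+\De_2)^{-n} - (z+\De_1)^{-n} = \sum_{i=0}^{n-1}(a_i - a_{i+1})
\]
then reduces the problem to recognising each summand as $(z+\De_1)^{-i-1} F (z+\De_2)^{-(n-i)}$. For this I would apply both sides to an arbitrary $\eta \in \C H$ and set $\xi := (z+\De_2)^{-(n-i)}\eta$. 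Since the index $i$ runs only up to $n-1$ we have $n-i \geq 1$, so $\xi \in \T{Dom}(\De_2) = \T{Dom}(\De)$ and the algebraic identity for $F$ applies. Substituting and cancelling one factor of $(z+\De_1)^{-1}$ on the left and $(z+\De_2)^{-1}$ on the right gives
\[
(z+\De_1)^{-i-1} F \xi = (z+\De_1)^{-i}(z+\De_2)^{-(n-i)}\eta - (z+\De_1)^{-i-1}(z+\De_2)^{-(n-i-1)}\eta,
\]
which is exactly $(a_i - a_{i+1})\eta$. Summing over $i$ produces the stated formula.

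No spectral, trace-class or Schatten input is required, so the only point that could cause trouble is the domain bookkeeping in the last step, namely the need to evaluate $F$ on the vector $(z+\De_2)^{-(n-i)}\eta$ via its description as $\De_1-\De_2$. This is handled by the fact that $n-i \geq 1$ throughout the sum, which places that vector inside the common domain $\T{Dom}(\De)$. Thus the identity is purely algebraic and the obstacle is cosmetic rather than substantive.
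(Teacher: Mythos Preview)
Your telescoping argument is correct and is exactly the standard proof the paper has in mind; indeed the paper omits the argument entirely, stating only that ``the proofs are well-known and will not be repeated here.'' Your attention to the domain issue (that $n-i\geq 1$ guarantees $(z+\De_2)^{-(n-i)}\eta\in\T{Dom}(\De)$ so that $F$ may be identified with $\De_1-\De_2$ there) is the one genuine point to check, and you have handled it properly.
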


\begin{lemma}\label{l:resexp}
Let $H_1,H_2$ be selfadjoint positive unbounded operators on $\C H$ with $\T{Dom}(H_1) = \T{Dom}(H_2)$. Let $z,z_0 \in \cc\sem (-\infty,0]$ and suppose that the linear operator $(H_1 - H_2 + z - z_0)(z_0 + H_2)^{-1} : \C H \to \C H$ is bounded with $\|(H_1 - H_2 + z- z_0)(z_0 + H_2)^{-1}\| < 1$. We then have the identity
\[
(z + H_1)^{-1} = (z_0 + H_2)^{-1} \cd \big( 1 + (H_1 - H_2 + z-z_0)(z_0 + H_2)^{-1} \big)^{-1}.
\]
\end{lemma}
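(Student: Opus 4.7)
The idea is to manipulate the identity $z+H_1=(z_0+H_2)+(H_1-H_2+z-z_0)$, which makes sense on the common domain $\operatorname{Dom}(H_1)=\operatorname{Dom}(H_2)$, and then pass to bounded operators by inverting $z_0+H_2$ on the right.

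The plan is as follows. First I would note that since $H_1$ and $H_2$ are positive selfadjoint and $z,z_0 \in \mathbb{C}\setminus(-\infty,0]$, both $z+H_1$ and $z_0+H_2$ are invertible with bounded inverses defined on all of $\mathcal{H}$, with range equal to the common domain $\operatorname{Dom}(H_1)=\operatorname{Dom}(H_2)$. Next, for any $\eta \in \mathcal{H}$, I would set $\xi := (z_0+H_2)^{-1}\eta \in \operatorname{Dom}(H_1)=\operatorname{Dom}(H_2)$ and apply the pointwise identity
\[
(z+H_1)\xi = (z_0+H_2)\xi + (H_1 - H_2 + z - z_0)\xi
\]
to obtain
\[
(z+H_1)(z_0+H_2)^{-1}\eta = \eta + (H_1-H_2+z-z_0)(z_0+H_2)^{-1}\eta.
\]
Since $\eta$ was arbitrary, this gives the operator identity
\[
(z+H_1)(z_0+H_2)^{-1} = 1 + (H_1-H_2+z-z_0)(z_0+H_2)^{-1}
\]
as bounded operators on $\mathcal{H}$.

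Finally, by the Neumann series and the hypothesis $\|(H_1-H_2+z-z_0)(z_0+H_2)^{-1}\|<1$, the operator $1+(H_1-H_2+z-z_0)(z_0+H_2)^{-1}$ has a bounded inverse on $\mathcal{H}$. Multiplying the displayed identity on the left by the bounded operator $(z+H_1)^{-1}$ and on the right by $\bigl(1+(H_1-H_2+z-z_0)(z_0+H_2)^{-1}\bigr)^{-1}$ yields
\[
(z_0+H_2)^{-1}\bigl(1+(H_1-H_2+z-z_0)(z_0+H_2)^{-1}\bigr)^{-1} = (z+H_1)^{-1},
\]
which is the claim.

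The only subtle point, and the one I would be most careful about, is the transition from the domain-level identity for $\xi \in \operatorname{Dom}(H_1)$ to the bounded operator identity on all of $\mathcal{H}$; this works cleanly because $(z_0+H_2)^{-1}$ maps $\mathcal{H}$ bijectively onto $\operatorname{Dom}(H_2)=\operatorname{Dom}(H_1)$, so no dense-subspace extension argument is required. Everything else is a direct manipulation using that $z+H_1$ is invertible with bounded inverse.
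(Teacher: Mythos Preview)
Your argument is correct and is the standard one: rewrite $z+H_1$ as $(z_0+H_2)\bigl(1+(H_1-H_2+z-z_0)(z_0+H_2)^{-1}\bigr)$ on the common domain, note that $(z_0+H_2)^{-1}$ is a bijection of $\mathcal{H}$ onto that domain so the identity globalizes, and then invert using the Neumann series hypothesis. The paper itself omits the proof, stating only that it is well-known, so there is nothing further to compare.
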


The next result implies that the trace of the quantity $$z^n \big( (z+ \C D_- \C D_+)^{-n} - (z + \C D_+ \C D_-)^{-n} \big),$$
which is computing the homological index, depends holomorphically on the complex parameter $z \in \cc\sem (-\infty,0]$.

\begin{prop}\label{p:holhomind}
Let $n \in \nn$ and let $i \in \{0,\ldots,n-1\}$. Suppose that there exists a $z_0 \in \cc\sem (-\infty,0]$ such that
\[
(z_0 + \De_1)^{-i-1} \cd F \cd (z_0 + \De_2)^{-(n-i)} \in \sL^1(\C H).
\]
Then the operator $(z + \De_1)^{-i-1} \cd F \cd (z + \De_2)^{-(n-i)}$ is of trace class for all $z \in \cc\sem (-\infty,0]$. Furthermore, the map
\[
\cc\sem (-\infty,0] \to \sL^1(\C H) \q
z \mapsto (z + \De_1)^{-i-1} \cd F \cd (z + \De_2)^{-(n-i)}
\]
is holomorphic with respect to the trace norm on $\sL^1(\C H)$.
\end{prop}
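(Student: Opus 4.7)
The plan is to reduce the statement at arbitrary $z \in \cc\sem(-\infty,0]$ to the hypothesis at $z_0$ by sandwiching the trace class operator between bounded ``intertwiners'' built from the functional calculus of $\De_1$ and $\De_2$. Concretely, I would write the factorisation
\[
(z + \De_1)^{-i-1} \cd F \cd (z + \De_2)^{-(n-i)} = L(z) \cd \big[ (z_0 + \De_1)^{-i-1} F (z_0 + \De_2)^{-(n-i)} \big] \cd R(z),
\]
where
\[
L(z) := (z + \De_1)^{-i-1} (z_0 + \De_1)^{i+1}, \q R(z) := (z_0 + \De_2)^{n-i} (z + \De_2)^{-(n-i)}.
\]
The first task is to verify that $L(z)$ and $R(z)$ are bounded for every $z \in \cc\sem (-\infty,0]$. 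Since $\De_1, \De_2 \geq 0$, this is a matter of applying the spectral theorem to the bounded function $t \mapsto (z_0 + t)^{i+1} / (z + t)^{i+1}$ on $[0,\infty)$: the denominator is bounded below in modulus since $\T{dist}(z, [0,\infty))$ is positive when $z \notin (-\infty, 0]$, while the ratio stays bounded as $t \to \infty$. Granted this, the trace class property at $z$ follows immediately from the hypothesis at $z_0$ and the two-sided ideal property of $\sL^1(\C H)$.

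For the holomorphy of $z \mapsto (z + \De_1)^{-i-1} F (z + \De_2)^{-(n-i)}$ with values in $\sL^1(\C H)$, I would argue in two stages. First, the standard resolvent identity shows that $z \mapsto (z + \De_j)^{-1}$ is operator-norm holomorphic on $\cc\sem (-\infty,0]$, and hence so are the finite products $(z + \De_j)^{-m}$; multiplying by the fixed bounded operators $(z_0 + \De_1)^{i+1}$ and $(z_0 + \De_2)^{n-i}$ preserves this, so $L$ and $R$ are operator-norm holomorphic maps into $\sL(\C H)$. Second, since the bilinear multiplications $\sL(\C H) \times \sL^1(\C H) \to \sL^1(\C H)$ and $\sL^1(\C H) \times \sL(\C H) \to \sL^1(\C H)$ are continuous, the product $L(z) \cd T \cd R(z)$ is trace-norm holomorphic whenever $T \in \sL^1(\C H)$ is fixed. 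Taking $T$ to be the trace class operator provided by the hypothesis at $z_0$ and using the factorisation above then yields the desired holomorphy.

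The only place where care is needed is in justifying the boundedness of $L(z)$ and $R(z)$ uniformly on compact subsets of $\cc\sem (-\infty,0]$, so that the difference quotients converge not only pointwise but in operator norm; this again reduces to an elementary estimate on the scalar function $(z_0 + t)^{i+1}/(z + t)^{i+1}$ which is locally uniform in $z$. Apart from this routine verification, the argument is a direct consequence of Lemma \ref{l:difpowexp} combined with the functional calculus, and no use of the deeper operator-theoretic structure of $\C D$ is required.
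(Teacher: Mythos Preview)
Your approach is essentially the same as the paper's: factor the operator at $z$ as a bounded-holomorphic sandwich of the trace class operator at $z_0$. The paper writes this as $H(z)\cd(z_0+\De_1)^{-i-1}F(z_0+\De_2)^{-(n-i)}\cd K(z)$ with $H(z)=(z_0+\De_1)^{i+1}(z+\De_1)^{-i-1}$ and $K(z)=(z+\De_2)^{-(n-i)}(z_0+\De_2)^{n-i}$, which coincide with your $L(z)$ and $R(z)$ since functions of $\De_j$ commute.

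One slip to fix: in your holomorphy argument you call $(z_0+\De_1)^{i+1}$ and $(z_0+\De_2)^{n-i}$ ``fixed bounded operators'', but for $i\geq 0$ these are genuinely unbounded, so you cannot simply multiply the holomorphic family $(z+\De_j)^{-m}$ by them. The cleanest repair is to observe that
\[
L(z)=\big[(z_0+\De_1)(z+\De_1)^{-1}\big]^{i+1}=\big[1+(z_0-z)(z+\De_1)^{-1}\big]^{i+1},
\]
which is a polynomial in the norm-holomorphic resolvent $(z+\De_1)^{-1}$ with coefficients polynomial in $z$, hence norm-holomorphic; similarly for $R(z)$. Also, the closing reference to Lemma~\ref{l:difpowexp} is spurious here: that lemma expands the \emph{difference} $(z+\De_2)^{-n}-(z+\De_1)^{-n}$ and plays no role in the present argument.
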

\begin{proof}
Let $z \in \cc\sem (-\infty,0]$. Remark then that
\begin{equation}\label{eq:powexp}
(z + \De_1)^{-i-1} = (z_0 + \De_1)^{i+1} \cd (z + \De_1)^{-i-1}  \cd (z_0 + \De_1)^{-i-1}.
\end{equation}
This shows that the map $z \mapsto (z + \De_1)^{-i-1}$ factorizes as a product $z \mapsto H(z) \cd (z_0 + \De_1)^{-i-1}$, where $z \mapsto H(z)$, $\cc\sem (-\infty,0] \to \sL(\C H)$ is holomorphic.

A similar computation shows that the map $z \mapsto (z + \De_2)^{-(n-i)}$ factorizes as a product $z \mapsto (z_0 + \De_2)^{-(n-i)} \cd K(z)$, where $z \mapsto K(z)$, $\cc\sem (-\infty,0] \to \sL(\C H)$, is holomorphic.

As a consequence, we obtain 
\[
(z + \De_1)^{-i-1} \cd F \cd (z + \De_2)^{-(n-i)}
= H(z) \cd (z_0 + \De_1)^{-i-1} \cd F \cd (z_0 + \De_2)^{-(n-i)} \cd K(z)
\]
for all $z \in \cc\sem (-\infty,0]$. This proves the proposition.
\end{proof}

In the following proposition, we introduce an expansion of the quantity $$(\la + \De_2)^{-n} - (\la + \De_1)^{-n}.$$ This expansion will serve us in two ways. First, it allows us to identify a good criterion for the existence of the homological index and second,
 it will help us to give a concrete computation of this quantity (or more precisely of its scaling limit at infinity).

To ease the notation, let $X_1(\la) := (1 + Y_1(\la))^{-1}$ and $X_2(\la) := (1 + Y_2(\la))^{-1}$ whenever these quantities make sense as bounded operators.

\begin{prop}\label{p:resexprea}
Suppose that $A \in \T{OP}^0$ and let $n \in \nn$. Then there exists a constant $C > 0$ such that
\[
\begin{split}
& (\la + \De_1)^{-i-1} \cd F \cd (\la + \De_2)^{-(n-i)} \\
& \q = \si^{-1}_\la\big(X_1(\la)\big) \clc \si^{-i-1}_\la \big( X_1(\la) \big) \cd (\la + \De)^{-i-1} \cd F \\
& \qqq \cd (\la + \De)^{-(n-i)} \cd \si^{n-i-1}_\la \big(X_2(\la) \big) \clc X_2(\la)
\end{split}
\]
for all $\la \geq C$ and all $i \in \{0,\ldots,n-1\}$
\end{prop}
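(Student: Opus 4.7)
The plan is to build the formula from the single-power identity $(\la+\De_j)^{-1}=(\la+\De)^{-1}X_j(\la)$, iterate it, and then push all the $X_j$-factors past the resolvent powers $(\la+\De)^{-1}$ using the automorphism $\si_\la$. The work takes place entirely on the algebra $\T{OP}^0$, so Proposition \ref{p:anainv} is what secures the starting point: choose $C>0$ large enough that $X_1(\la)=(1+Y_1(\la))^{-1}$ and $X_2(\la)=(1+Y_2(\la))^{-1}$ exist in $\T{OP}^0$ for all $\la\geq C$, and that in addition $\|Y_1(\la)\|,\|Y_2(\la)\|<1$ (this second condition is automatic from Lemma \ref{l:disanaest}).

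First I would apply Lemma \ref{l:resexp} with $H_1=\De_j$, $H_2=\De$, $z=z_0=\la$; since $\De_j-\De=V_j$, this gives
\[
(\la+\De_j)^{-1}=(\la+\De)^{-1}\bigl(1+Y_j(\la)\bigr)^{-1}=(\la+\De)^{-1}X_j(\la)\q(j=1,2).
\]
Raising this to powers $i+1$ and $n-i$ respectively yields $(\la+\De_1)^{-i-1}=\bigl((\la+\De)^{-1}X_1(\la)\bigr)^{i+1}$ and $(\la+\De_2)^{-(n-i)}=\bigl((\la+\De)^{-1}X_2(\la)\bigr)^{n-i}$. This step is safe because $X_j(\la)$ lies in $\T{OP}^0$, so $\T{Dom}(\De)$ is invariant under it and the iterated products are well-defined bounded operators.

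Next I would collect the resolvents on opposite sides using the two intertwining identities, which are immediate from $\si_\la^m(T)=(\la+\De)^m T(\la+\De)^{-m}$: for $T\in\T{OP}^0$,
\[
(\la+\De)^{-1}T=\si_\la^{-1}(T)(\la+\De)^{-1}\q\T{and}\q T(\la+\De)^{-1}=(\la+\De)^{-1}\si_\la(T).
\]
Applying the first identity inductively to $\bigl((\la+\De)^{-1}X_1(\la)\bigr)^{i+1}$ moves the resolvents to the right and twists each $X_1(\la)$ by an additional $\si_\la^{-1}$:
\[
\bigl((\la+\De)^{-1}X_1(\la)\bigr)^{i+1}=\si_\la^{-1}(X_1(\la))\,\si_\la^{-2}(X_1(\la))\cdots\si_\la^{-(i+1)}(X_1(\la))\cd(\la+\De)^{-(i+1)}.
\]
The symmetric use of the second identity on $\bigl((\la+\De)^{-1}X_2(\la)\bigr)^{n-i}$ puts the resolvents on the left:
\[
\bigl((\la+\De)^{-1}X_2(\la)\bigr)^{n-i}=(\la+\De)^{-(n-i)}\cd\si_\la^{n-i-1}(X_2(\la))\cdots\si_\la(X_2(\la))\cd X_2(\la).
\]
Multiplying the two expressions together with $F$ sandwiched between $(\la+\De)^{-(i+1)}$ and $(\la+\De)^{-(n-i)}$ produces exactly the formula claimed in the proposition.

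The main potential obstacle is domain bookkeeping: the inductive rearrangements only make sense if each intermediate expression preserves the relevant Sobolev scale $\C H^s=\T{Dom}(\De^{s/2})$. This is precisely what closure of $\T{OP}^0$ under $\si_\la^{\pm m}$ delivers, and it is why I must invoke Proposition \ref{p:anainv} at the outset to put $X_j(\la)$ in $\T{OP}^0$ rather than merely in $\sL(\C H)$. Once that is in hand, the two identities above, used as rewriting rules, produce the stated equality by a short double induction on $i+1$ and $n-i$.
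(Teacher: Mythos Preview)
Your proposal is correct and follows essentially the same route as the paper: invoke Proposition~\ref{p:anainv} to place $X_1(\la),X_2(\la)$ in $\T{OP}^0$, apply Lemma~\ref{l:resexp} to obtain $(\la+\De_j)^{-1}=(\la+\De)^{-1}X_j(\la)$, iterate, and then use the definition of $\si_\la^m$ to pull the $X_j$-factors past the resolvent powers. You are more explicit than the paper about the intertwining identities and the domain bookkeeping, but the argument is the same.
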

\begin{proof}
Choose the constant $C > 0$ as in Proposition \ref{p:anainv} and let $\la \geq C$. It follows that $X_1(\la) = \big(1+Y_1(\la) \big)^{-1}$ and $X_2(\la) = \big(1 + Y_2(\la)\big)^{-1}$ lie in $\T{OP}^0$.

Let $i \in \{0,\ldots,n-1\}$. The result of Lemma \ref{l:resexp} yields that
\begin{equation}
\begin{split}\label{eq:firres}
& (\la + \De_1)^{-i-1} \cd F \cd (\la + \De_2)^{-(n-i)} \\
& \q = \big( (\la + \De)^{-1} X_1(\la) \big)^{i+1} \cd F \cd \big(
(\la + \De)^{-1} \cd X_2(\la) \big)^{n-i}.
\end{split}
\end{equation}

But this implies the statement of the proposition since
\[
\begin{split}
& \big( (\la + \De)^{-1} \cd X_1(\la) \big)^{i+1} = \si^{-1}_\la\big( X_1(\la) \big) \clc \si^{-i-1}_\la \big( X_1(\la) \big) \cd (\la + \De)^{-i-1} 
\q \T{and} \\
& \big( (\la + \De)^{-1} \cd X_2(\la) \big)^{n-i} = (\la + \De)^{-n + i} \si^{n-i-1}_\la \big( X_2(\la) \big) \cd \ldots \cd X_2(\la),
\end{split}
\]
by the definition of the automorphisms $\si^m_\la : \T{OP}^0 \to \T{OP}^0$, $m \in \zz$.
\end{proof}

We are now ready to state and prove the announced criterion for the existence of the homological index.

\begin{prop}\label{p:tradifpow}
Suppose that $A \in \T{OP}^0$ and that there exists an $n_0 \in \nn$ such that
\[
(1+ \De)^{-i-1} \cd F \cd (1+\De)^{-n_0+i} \in \sL^1(\C H) \q \T{for all }\, i \in \{0,1,\ldots,n_0-1\}.
\]
Let $n \geq n_0$ and let $i \in \{0,1,\ldots,n-1\}$. Then
\[
(z+\De_1)^{-i-1} F (z+\De_2)^{-n+i} \in \sL^1(\C H) \q 
\T{for all }\, z \in \cc\sem (-\infty,0].
\]
Furthermore, the map
\[
\cc\sem (-\infty,0] \to \sL^1(\C H) \q z \mapsto (z+\De_1)^{-i-1} F (z+\De_2)^{-n+i}
\]
is holomorphic in trace norm.
\end{prop}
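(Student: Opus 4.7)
My plan is to reduce the assertion to a single trace-class statement at one well-chosen real point $z = \la$, and then invoke Proposition \ref{p:holhomind} to upgrade that to trace-class membership on all of $\cc \sem (-\infty,0]$ together with trace-norm holomorphicity of the associated map.

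Fix $i \in \{0,\ldots,n-1\}$ and choose $\la \geq C$, where $C$ is the constant produced by Proposition \ref{p:anainv} and Proposition \ref{p:resexprea}. That latter proposition expresses $(\la + \De_1)^{-i-1} \cd F \cd (\la + \De_2)^{-n+i}$ as a two-sided sandwich of bounded elements of $\T{OP}^0$ around $(\la + \De)^{-i-1} \cd F \cd (\la + \De)^{-n+i}$, so it suffices to prove that this middle operator is trace class. Multiplying by the bounded operators $(1+\De)(\la+\De)^{-1}$ and $(\la+\De)(1+\De)^{-1}$ on either side, the problem reduces further to the case $\la = 1$.

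The heart of the argument is the purely algebraic claim that $(1+\De)^{-i-1} \cd F \cd (1+\De)^{-n+i} \in \sL^1(\C H)$ for every $i \in \{0,\ldots,n-1\}$ and every $n \geq n_0$. I would split into two cases. If $i \leq n_0 - 1$, then I factorise
\[
(1+\De)^{-i-1} \cd F \cd (1+\De)^{-n+i} = \big[ (1+\De)^{-i-1} \cd F \cd (1+\De)^{-n_0+i} \big] \cd (1+\De)^{-(n-n_0)},
\]
where the bracketed factor is trace class by hypothesis and the remaining factor is bounded (trivially so when $n = n_0$). If instead $i \geq n_0$, I would apply the hypothesis at index $j = n_0 - 1$ and write
\[
(1+\De)^{-i-1} \cd F \cd (1+\De)^{-n+i} = (1+\De)^{-(i+1-n_0)} \cd \big[ (1+\De)^{-n_0} \cd F \cd (1+\De)^{-1} \big] \cd (1+\De)^{-(n-i-1)},
\]
where $i+1-n_0 \geq 1$ and $n-i-1 \geq 0$ ensure both outer factors are bounded, while the bracketed middle factor is again trace class by hypothesis.

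Finally, applying Proposition \ref{p:holhomind} with $z_0 := \la$ extends the trace-class property to all $z \in \cc \sem (-\infty,0]$ and delivers the trace-norm holomorphicity. I do not foresee a serious obstacle; the one point requiring care is the case split above, which must arrange matters so that the hypothesis at $n_0$ is invoked only for indices in the admissible range $\{0,\ldots,n_0-1\}$.
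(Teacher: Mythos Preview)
Your proposal is correct and follows essentially the same route as the paper's proof: reduce via Proposition \ref{p:resexprea} to the unperturbed middle term $(\la+\De)^{-i-1} F (\la+\De)^{-n+i}$, handle that by the same case split on whether $i \leq n_0-1$ or $i \geq n_0$, and then invoke Proposition \ref{p:holhomind}. The only cosmetic difference is that you make the passage from $\la$ to $1$ explicit before the case split, whereas the paper performs the split directly at $\la$ and leaves that reduction implicit.
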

\begin{proof}
Choose a constant $C > 0$ as in Proposition \ref{p:resexprea} and let $\la \geq C$. We obtain 
\[
\begin{split}
& (\la + \De_1)^{-i-1} \cd F \cd (\la + \De_2)^{-(n-i)} \\
& \q = \si^{-1}_\la\big(X_1(\la)\big) \clc \si^{-i-1}_\la \big( X_1(\la) \big) \cd (\la + \De)^{-i-1} \cd F \\
& \qqq \cd (\la + \De)^{-(n-i)} \cd \si^{n-i-1}_\la \big(X_2(\la) \big) \clc X_2(\la)
\end{split}
\]
This implies that $(\la + \De_1)^{-i-1} \cd F \cd (\la + \De_2)^{-(n-i)} \in \sL^1(\C H)$. Indeed, when $i \in \{0,\ldots,n_0 - 1\}$ we have that
\[
(\la + \De)^{-i-1} \cd F \cd (\la + \De)^{-(n-i)} = (\la + \De)^{-i-1} \cd F \cd (\la + \De)^{-n_0 + i} \cd (\la +\De)^{n_0 - n} \in \sL^1(\C H)
\]
whereas when $i \in \{n_0,\ldots,n-1\}$ we have that
\[
\begin{split}
& (\la + \De)^{-i-1} \cd F \cd (\la + \De)^{-(n-i)}\\
& \q  = (\la + \De)^{n_0 - i - 1} \cd (\la + \De)^{-n_0} \cd F \cd (\la + \De)^{-1} \cd (\la + \De)^{-n+1 + i} 
\in \sL^1(\C H).
\end{split}
\]
The statement of the proposition is now a consequence of Proposition \ref{p:holhomind}.
\end{proof}

Let us summarize what we have proved thus far:

\begin{thm}\label{t:tradifpow}
Let $\dir_+ : \T{Dom}(\dir_+) \to \C H$ be a closed operator and let $A$ be a bounded operator that 
together satisfy
the conditions in Assumption \ref{a:unb}. Suppose that $A \in \T{OP}^0$  (with respect to the Laplacian $\De$) and that there exists an $n_0 \in \nn$ such that $(1+\De)^{-i-1} F (1+ \De)^{-n_0+i}$ is of trace class for all $i \in \{0,\ldots,n_0-1\}$. 

Then the homological index $\T{H-Ind}_n(T_\la)$ exists for all $\la \in (0,\infty)$ and all $n \geq n_0$. Furthermore, for each $n \geq n_0$, the map $z \mapsto z^n \cd \T{Tr}\big( (z + \De_2)^{-n} - (z+ \De_1)^{-n} \big)$ provides a holomorphic extension of $\la \mapsto \T{H-Ind}_n(T_\la)$ to the open set $\cc\sem(-\infty,0]$.
\end{thm}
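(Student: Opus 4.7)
The theorem is essentially a synthesis of the results already established in the section, so the proof plan is largely an assembly exercise rather than a new argument.

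\textbf{Overall strategy.} The key observation is that the quantity whose trace defines the homological index, namely $(1 - T_\la^* T_\la)^n - (1 - T_\la T_\la^*)^n$, equals $\la^n\bigl((\la + \De_2)^{-n} - (\la + \De_1)^{-n}\bigr)$ by the identities stated just after \eqref{eq:boures} applied to the rescaled operator $\la^{-1/2}\C D_+$. So everything reduces to proving that the difference of resolvent powers $(z + \De_2)^{-n} - (z + \De_1)^{-n}$ is trace class for $z \in \cc\sem(-\infty,0]$ and depends holomorphically on $z$ in trace norm.

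\textbf{Main steps.} First, I would invoke Lemma \ref{l:difpowexp} to expand
\[
(z + \De_2)^{-n} - (z + \De_1)^{-n} = \sum_{i=0}^{n-1} (z + \De_1)^{-i-1} F (z + \De_2)^{-(n-i)}.
\]
Second, for each individual summand I would apply Proposition \ref{p:tradifpow}: under the standing hypotheses ($A \in \T{OP}^0$ and the degree-$n_0$ trace class condition on $(1+\De)^{-i-1} F (1+\De)^{-n_0+i}$) it asserts precisely that each term $(z+\De_1)^{-i-1} F (z + \De_2)^{-(n-i)}$ is of trace class for every $z \in \cc\sem(-\infty,0]$ and every $n \geq n_0$, and that the assignment $z \mapsto (z+\De_1)^{-i-1} F (z + \De_2)^{-(n-i)}$ is holomorphic in trace norm. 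Summing a finite number of such maps preserves both properties, so the full difference lies in $\sL^1(\C H)$ and depends holomorphically on $z \in \cc\sem(-\infty,0]$.

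\textbf{Conclusion.} Specialising to $z = \la \in (0,\infty)$ and using the scaled identity
\[
(1 - T_\la^* T_\la)^n - (1 - T_\la T_\la^*)^n = \la^n \bigl( (\la + \De_2)^{-n} - (\la + \De_1)^{-n} \bigr),
\]
the trace class property proves that $\T{H-Ind}_n(T_\la)$ is well-defined for all $\la > 0$ and all $n \geq n_0$. Taking traces commutes with the holomorphic trace-norm dependence (the trace is a continuous linear functional on $\sL^1(\C H)$), so the map $z \mapsto z^n \cd \T{Tr}\bigl( (z + \De_2)^{-n} - (z+ \De_1)^{-n} \bigr)$ is holomorphic on $\cc\sem(-\infty,0]$ and restricts to $\la \mapsto \T{H-Ind}_n(T_\la)$ on the positive real axis.

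\textbf{Expected obstacles.} There is essentially no obstacle here: the hard work has been pushed into the OP$^0$-calculus of Section \ref{ss:quadif} and into Proposition \ref{p:tradifpow}. The only thing one must double-check is that the ``splitting into trace class times holomorphic bounded operators'' used inside the proof of Proposition \ref{p:holhomind} is valid for every $z$, not just near some reference point $z_0$; but since \eqref{eq:powexp} is a global identity on $\cc\sem(-\infty,0]$, this is automatic. So the argument is a short corollary-style write-up.
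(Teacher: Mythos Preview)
Your proposal is correct and matches the paper's approach exactly: the paper presents this theorem with the words ``Let us summarize what we have proved thus far'' and gives no further argument, so it is indeed intended as an assembly of Lemma \ref{l:difpowexp} and Proposition \ref{p:tradifpow} (which in turn rests on Proposition \ref{p:holhomind}), exactly as you outline.
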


\section{Large scale expansions of the homological index}\label{s:larscaexp}
As previously $\C H$ is a Hilbert space. Define the projections 
\[
P_+ := \ma{cc}{
1 & 0 \\
0 & 0
} \, \T{ and }\, P_- := \ma{cc}{
0 & 0 \\
0 & 1
} : \C H \op \C H \to \C H \op \C H.
\]

Consider the vector subspace $\sL^1_s(\C H) \su \sL(\C H \op \C H)$ consisting of square matrices $T = \ma{cc}{T_{11} & T_{12} \\ T_{21} & T_{22}}$ of bounded operators with $T_{11} - T_{22} \in \sL^1(\C H)$. This vector space becomes a Banach space when equipped with the norm
\[
\|\cd\|_1^s : \sL^1_s(\C H) \to [0,\infty) \q \|\cd\|_1^s : T \mapsto \|T\| + \|P_+ T P_+ - P_- T P_-\|_1.
\]
The super trace
\[
\T{Tr}_s : \sL^1_s(\C H) \to \cc \q \T{Tr}_s : T \mapsto \T{Tr}\big( P_+ T P_+ - P_- T P_-\big)
\]
defines a bounded linear functional on this Banach space.
\vspace{3pt}

Let now $A : \C H \to \C H$ be a bounded operator and let $\dir_+ : \T{Dom}(\dir_+) \to \C H$ be a closed unbounded operator. For the rest of this section the conditions in Assumption \ref{a:unb} will be in effect. Furthermore, we will impose the following:

\begin{assu}\label{a:tra}
Assume that $A \in \T{OP}^0$ and that there exists an $n_0 \in \nn$ such that $(1 + \De)^{-i-1} \cd F \cd (1+\De)^{-n_0 +i+\ep}$ is of trace class for all $i \in \{-1,0,1,\ldots,n_0-1\}$ and all $\ep \in (0,1)$.
\end{assu}

Let us fix a natural number $n \in \{n_0,n_0 + 1,\ldots\}$.

As a consequence of Theorem \ref{t:tradifpow} we have a well-defined homological index
\[
\la^n \T{Tr}\big( (\la + \De_2)^{-n} - (\la + \De_1)^{-n} \big) = \T{H-Ind}_n(T_\la)
\]
for all $\la \in (0,\infty)$. This quantity can be reinterpreted as follows.  Consider the $n^{\T{th}}$ power of the resolvent of the scaled operator $\la^{-1}\C D^2$,
\begin{equation}\label{eq:ressqu}
\begin{split}
(1 + \la^{-1}\C D^2)^{-n} 
& = \ma{cc}{
(1 + \la^{-1}\C D_- \C D_+)^{-n} & 0 \\
0 & (1 + \la^{-1}\C D_+\C D_-)^{-n}
} \\
& = \la^n \ma{cc}{
(\la + \De_2)^{-n} & 0 \\
0 & (\la + \De_1)^{-n}
}.
\end{split}
\end{equation}
The homological index in degree $n$ is then obtained as the super trace of the operator in \eqref{eq:ressqu}:
\[
\begin{split}
\T{H-Ind}_n(T_\la) & = \T{Tr}\big( P_+(1 + \la^{-1} \C D^2)^{-n} P_+ - P_-(1 + \la^{-1} \C D^2)^{-n}P_- \big) \\
& := \T{Tr}_s \big( (1 + \la^{-1}\C D^2 )^{-n} \big)
\end{split}
\]
Remark that this super trace makes sense even though we do not assume that each term on the diagonal is of trace class.

The first aim of this section is to obtain an expansion of the resolvent $(1 + \la^{-1} \C D^2)^{-n} \in \sL^1_s(\C H)$ when the scaling parameter $\la > 0$ is large. The standard terms in this expansion have the following form:

\begin{notation}\label{n:omedeg}
For each $J = (j_1,\ldots,j_n) \in \nn_0^n$ and each $\la > 0$, define the bounded operators
\[
\begin{split}
\om_1(J,\la) & := (-1)^{j_1 \plp j_n}(\la + \De)^{-1} Y_1(\la)^{j_1} \clc (\la + \De)^{-1} Y_1(\la)^{j_n} \q \T{and} \\
\om_2(J,\la) & := (-1)^{j_1 \plp j_n}(\la + \De)^{-1} Y_2(\la)^{j_1} \clc (\la + \De)^{-1} Y_2(\la)^{j_n}
\end{split}
\]
on the Hilbert space $\C H$.
\end{notation}

The main technical result for obtaining the desired expansion of the resolvent is a trace norm estimate on the difference $\om_2(J,\la) - \om_1(J,\la)$, for $J \in \nn_0^n$ and large scaling parameters $\la$. This is contained in the next lemma.

\begin{lemma}\label{l:omeest}
The diagonal operator
\[
\om(J,\la) := \ma{cc}{
\om_2(J,\la) & 0 \\
0 & \om_1(J,\la)
}
\]
lies in the Banach space $\sL^1_s(\C H)$ for all $J \in \nn_0^n$ and all $\la >0$. Furthermore, for each $\ep \in (0,1)$, there exist constants $C,K>0$ such that
\[
\|\om(J,\la)\|_1^s \leq (j_1 \plp j_n + 1) \cd K^{j_1 \plp j_n} \cd \la^{n_0 - n -(j_1 \plp j_n)/2 + 1/2 - \ep} 
\]
for all $J = (j_1,\ldots,j_n) \in \nn_0^n$ and all $\la \geq C$.
\end{lemma}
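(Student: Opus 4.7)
My plan is to expand $\om_2(J,\la) - \om_1(J,\la)$ as a telescoping sum and then bound each resulting term in trace norm using Assumption \ref{a:tra} together with the $O(\la^{-1/2})$ estimates of Lemma \ref{l:disanaest}. The starting point is the identity $Y_1(\la) - Y_2(\la) = F \cd (\la+\De)^{-1}$, which holds because $V_1 - V_2$ extends to the bounded operator $F$ on $\T{Dom}(\De)$. Setting $A_l := (\la+\De)^{-1} Y_2^{j_l}$, $B_l := (\la+\De)^{-1} Y_1^{j_l}$ and $R := (\la+\De)^{-1}$, I would first apply the standard identity $A_1 \clc A_n - B_1 \clc B_n = \sum_l A_1 \clc A_{l-1}(A_l - B_l) B_{l+1} \clc B_n$, and then expand $Y_2^{j_l} - Y_1^{j_l} = -\sum_{k=0}^{j_l-1} Y_2^k F R Y_1^{j_l - 1 - k}$. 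This produces a sum of at most $j_1 \plp j_n$ terms, each carrying $n+1$ factors of $R$, exactly $j_1 \plp j_n - 1$ factors of $Y_i^{\cdot}$, and one factor of $F$.

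Next I would use the commutation rule $T \cd R^m = R^m \cd \si^m_\la(T)$ (valid for $T \in \T{OP}^0$) to collect the $l$ resolvents on the left of $F$ into $R^l$ and the $n-l+1$ on the right into $R^{n-l+1}$, replacing each $Y_i^{j_k}$ with its $\si^m_\la$-conjugate. A second application splits $R^l = R^{l-a_1} R^{a_1}$ and $R^{n-l+1} = R^{a_2} R^{n-l+1-a_2}$, inserting $R^{a_1}$ and $R^{a_2}$ directly adjacent to $F$, where I would choose integers $a_1 \in \{0,\ldots,n_0\}$ and $a_2 \geq 1$ with $a_1+a_2 = n_0+1$; this is possible because $n \geq n_0$. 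To invoke Assumption \ref{a:tra} I would further factor $R^{a_1} F R^{a_2} = \bigl[R^{a_1} F (\la+\De)^{-(a_2-\ep)}\bigr] \cd (\la+\De)^{-\ep}$. Sandwiching the bracketed factor by $(\la+\De)^{-a}(1+\De)^a$ and $(1+\De)^a(\la+\De)^{-a}$ (both of operator norm $\leq 1$ for $\la \geq 1$, $a \geq 0$, as one sees by spectral calculus) reduces its trace norm to the hypothesis at $\la = 1$, while $\|(\la+\De)^{-\ep}\| \leq \la^{-\ep}$ supplies the fractional decay.

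Combining all factors, each term of the telescoping sum is bounded by $K^{j_1 \plp j_n - 1} M(\ep) \cd \la^{n_0 - n - (j_1 \plp j_n)/2 + 1/2 - \ep}$: the remaining bare resolvents contribute $\la^{-(l-a_1)-(n-l+1-a_2)} = \la^{-(n-n_0)}$, the $\si^m_\la$-transformed $Y$-products contribute $\la^{-(j_1 \plp j_n - 1)/2}$ via the algebra-homomorphism property $\si^m_\la(Y_i^j) = \si^m_\la(Y_i)^j$ and the uniform estimate $\|\si^m_\la(Y_i(\la))\| = O(\la^{-1/2})$, and the trace class factor contributes $\la^{-\ep}$. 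Summing over the $j_1 \plp j_n$ terms produces the factor $(j_1 \plp j_n + 1)$ in the claim (with the $+1$ absorbing the trivial case $j_1 \plp j_n = 0$). The operator norm part of $\|\cd\|_1^s$ is automatically dominated since $\|\om_i(J,\la)\| \leq K^{j_1 \plp j_n} \la^{-n - (j_1 \plp j_n)/2}$ by a direct product bound. Trace class membership for all $\la > 0$ (not only $\la \geq C$) follows from the same decomposition, using that the sandwiching operators remain bounded for every $\la > 0$ (with $\la$-dependent norm when $\la < 1$).

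The main technical point is the uniform bound $\|\si^m_\la(Y_i(\la))\| = O(\la^{-1/2})$ with constants depending only on $m$ (which ranges over a finite set determined by $n$), not on $\la$. By Lemma \ref{l:difact} this reduces to uniform boundedness of the iterated commutator seminorms $\|\de^k(Y_i(\la))\|$, which in turn uses normality of $\dir_+$ in an essential way: since $\dir_\pm$ commutes with $\De^{1/2}$, one has $\de(F_\pm(\la)) = 0$, and the decomposition $Y_1(\la) = A F_-(\la) + AA^* (\la+\De)^{-1} + F_+(\la) \si_\la(A^*)$ from the proof of Lemma \ref{l:disanaest} expresses $\de^k(Y_1(\la))$ as a finite combination of $\de^j(A)$, $\de^j(A^*)$, $F_\pm(\la)$, and $\si_\la$ applied to these, all uniformly bounded for $\la \geq 1$.
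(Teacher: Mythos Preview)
Your approach is essentially the same as the paper's: telescope $\om_2(J,\la)-\om_1(J,\la)$ using $Y_1(\la)-Y_2(\la)=F(\la+\De)^{-1}$, commute resolvents past the $Y$-factors via $\si^m_\la$ to isolate a central block $(\la+\De)^{-k}F(\la+\De)^{-(n-k+1)}$, and then invoke Assumption~\ref{a:tra} together with the $O(\la^{-1/2})$ bound on $\si^m_\la(Y_i(\la))$. Your explicit $a_1,a_2$ splitting and sandwiching by $(1+\De)^a(\la+\De)^{-a}$ spells out what the paper records as the single line \eqref{eq:tranorest0}, and your final paragraph on the uniformity of $\|\si^m_\la(Y_i(\la))\|=O(\la^{-1/2})$ makes explicit the content of the paper's appeal to ``Lemma~\ref{l:disanaest} and Lemma~\ref{l:difact}'' (the point being that $F_\pm(\la)$ commute with $\De$, so one only needs Lemma~\ref{l:difact} for the fixed operators $A,A^*,AA^*$); otherwise the two arguments coincide.
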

\begin{proof}
Let $J \in \nn_0^n$ and let $\la > 0$. Let $k \in \{1,\ldots,n\}$ and let $l \in \{1,\ldots,j_k\}$. To show that $\om_2(J,\la) - \om_1(J,\la) \in \sL^1(\C H)$ it is enough to verify that
\[
\begin{split}
& (\la + \De)^{-1} Y_2(\la)^{j_1} \clc (\la + \De)^{-1} Y_2(\la)^{j_{k - 1}} \\
& \q \cd (\la + \De)^{-1} Y_2(\la)^{l-1} \cd \big( Y_2(\la) - Y_1(\la) \big) Y_1(\la)^{j_k -l} \\
& \qq \cd (\la + \De)^{-1} Y_1(\la)^{j_{k + 1}} \clc (\la + \De)^{-1} Y_1(\la)^{j_n} \in \sL^1(\C H).
\end{split}
\]
Using Lemma \ref{l:disanaest} this bounded operator can be rewritten as
\[
\begin{split}
& -\si_\la^{-1}\big(Y_2(\la)^{j_1} \big) \clc \si_\la^{-k+1}\big( Y_2(\la)^{j_{k-1}} \big)
\cd \si_\la^{-k}\big( Y_2(\la)^{l-1} \big) \\
& \q \cd (\la + \De)^{-k} \cd F \cd (\la + \De)^{-n+k-1} \\
& \qq \cd \si_\la^{n-k}\big( Y_1(\la)^{j_k - l} \big) \cd \si_\la^{n-k-1}\big( Y_1(\la)^{j_{k+1}} \big) \clc Y_1(\la)^{j_n},
\end{split}
\]
where we recall that $F \cd (\la + \De)^{-1} = Y_1(\la) - Y_2(\la)$. This proves that $\om(J,\la) \in \sL^1_s(\C H)$ since the conditions of Assumption \ref{a:tra} are in effect. See also the proof of Proposition \ref{p:tradifpow}.

To prove the norm estimate, we first note that there exist constants $C_1,K_1 > 0$ such that
\begin{equation}\label{eq:openorest}
\|\om_1(J,\la) \| \, , \, \|\om_2(J,\la)\| \leq \la^{-n} K_1^{j_1 \plp j_n} \cd \la^{-(j_1\plp j_n)/2}
\end{equation}
for all $\la \geq C_1$ and all $J = (j_1,\ldots,j_n) \in \nn_0^n$. See Lemma \ref{l:disanaest}.

Let now $\ep \in (0,1)$. By our standing Assumption \ref{a:tra} we may choose a constant $K_3 > 0$ such that
\begin{equation}\label{eq:tranorest0}
\| (\la + \De)^{-k} \cd F \cd (\la + \De)^{-n+k-1} \|_1 \leq K_3 \cd \la^{n_0 - n - \ep}
\end{equation}
for all $k \in \{1,\ldots,n\}$ and all $\la \geq 1$.

Furthermore, by an application of Lemma \ref{l:disanaest} and Lemma \ref{l:difact} we may choose constants $C_2,K_2 > 0$ such that
\begin{equation}\label{eq:tranorest}
\begin{split}
& \big\| \si_\la^{-1}\big(Y_2(\la)^{j_1} \big) \clc \si_\la^{-k+1}\big( Y_2(\la)^{j_{k-1}} \big)
\cd \si_\la^{-k}\big( Y_2(\la)^{l-1} \big) \\
& \qq \cd (\la + \De)^{-k} \cd F \cd (\la + \De)^{-n+k-1} \\
& \qqq \cd \si_\la^{n-k}\big( Y_1(\la)^{j_k - l} \big) \cd \si_\la^{n-k-1}\big( Y_1(\la)^{j_{k+1}} \big) \clc Y_1(\la)^{j_n} \big\|_1 \\
& \q \leq K_2^{j_1 \plp j_n -1} \cd \la^{-(j_1 \plp j_n - 1)/2} \cd \| (\la + \De)^{-k} \cd F \cd (\la + \De)^{-n+k-1} \|_1
\end{split}
\end{equation}
for all $J \in \nn_0^n$, $k \in \{1,\ldots,n\}$, $l \in \{1,\ldots,j_k\}$ and all $\la \geq C_2$.

A combination of \eqref{eq:openorest}, \eqref{eq:tranorest0} and \eqref{eq:tranorest} proves the norm estimate on $\om(J,\la)$.
\end{proof}

Let $\C B : \nn_0^n \to \nn_0$ denote the addition, $\C B : (j_1,\ldots,j_n) \mapsto j_1 \plp j_n$. The level sets will be written as $\C B_k := \C B^{-1}(\{k\})$, for all $k \in \nn_0$.

We are now ready to rewrite the homological index using an infinite series of operators of the form $\om(J,\la)$, where $J \in \nn_0^n$ and the scaling parameter $\la$ is large. This expansion will be the main tool for computing the anomaly of Dirac type operators in Section \ref{s:locano}.

\begin{prop}\label{p:degexp}
Let $\dir_+ : \T{Dom}(\dir_+) \to \C H$ be a closed operator and let $A : \C H \to \C H$ be a bounded operator which together satisfy the conditions in Assumption \ref{a:unb} and Assumption \ref{a:tra} for some $n_0 \in \nn$. Let $n \in \{n_0,n_0 +1,\ldots\}$. 

Then there exists a constant $C > 0$ such that
\[
(1 + \la^{-1}\C D^2)^{-n} = \la^n \cd \sum_{k = 0}^\infty \sum_{J \in \C B_k} \om(J,\la)
\]
for all $\la \geq C$, where the sum converges absolutely in $\sL^1_s(\C H)$. In particular, we have that
\[
\T{H-Ind}_n(T_\la) = \la^n \cd \sum_{k = 0}^\infty \sum_{J \in \C B_k} \T{Tr}_s\big( \om(J,\la) \big)
\]
for all $\la \geq C$, where the sum converges absolutely.
\end{prop}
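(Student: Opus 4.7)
The approach is to invert $1 + Y_i(\la)$ as a Neumann series and iterate $n$ times, then use Lemma \ref{l:omeest} to upgrade operator-norm convergence on each diagonal block to absolute convergence in $\sL^1_s(\C H)$. First, applying Lemma \ref{l:resexp} with $H_1 = \De_i$, $H_2 = \De$, $z = z_0 = \la$, together with the estimate $\|Y_i(\la)\| = O(\la^{-1/2})$ of Lemma \ref{l:disanaest}, there is a constant $C_0 > 0$ such that for every $\la \geq C_0$ the factorization
\[
(\la + \De_i)^{-1} = (\la + \De)^{-1}\big( 1 + Y_i(\la) \big)^{-1} = (\la + \De)^{-1} \sum_{j=0}^\infty (-Y_i(\la))^j
\]
holds, with the Neumann series converging absolutely in operator norm. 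Taking the $n$-th power and expanding each of the $n$ factors $(1+Y_i(\la))^{-1}$ by its Neumann series, absolute convergence legitimises the rearrangement
\[
(\la + \De_i)^{-n} = \sum_{J \in \nn_0^n} \om_i(J,\la), \q i \in \{1,2\},
\]
with convergence absolute in operator norm on $\C H$.

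The main step is to upgrade from operator-norm convergence of each diagonal block to absolute convergence of the $2\times 2$ matrix series $\sum_J \om(J,\la)$ in the Banach space $\sL^1_s(\C H)$. For this I invoke Lemma \ref{l:omeest}: fixing $\ep \in (0,1)$, there exist $C, K > 0$ such that
\[
\la^n \|\om(J,\la)\|_1^s \leq (|J| + 1)\, K^{|J|}\, \la^{n_0 + 1/2 - \ep - |J|/2}
\]
for all $\la \geq C$ and all $J \in \nn_0^n$, where $|J| := j_1 \plp j_n$. Summing first over the level set $\C B_k$, whose cardinality $\binom{k+n-1}{n-1}$ is polynomial in $k$, and then over $k \in \nn_0$, produces a majorising series of geometric type in $K\la^{-1/2}$ with polynomial coefficients. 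Choosing $C' \geq \max\{C_0, C\}$ large enough that $K^2 \la^{-1} < 1$, the iterated sum $\sum_{k=0}^\infty \sum_{J \in \C B_k} \la^n\, \om(J,\la)$ converges absolutely in $\sL^1_s(\C H)$ for all $\la \geq C'$, and any regrouping (in particular the one grouped by $|J| = k$) converges to the same limit.

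Absolute convergence in $\sL^1_s(\C H)$ forces operator-norm convergence of each diagonal block, so the matrix limit must agree with the diagonal matrix whose entries are the two operator-norm limits $(\la + \De_2)^{-n}$ and $(\la + \De_1)^{-n}$ computed above; combining with \eqref{eq:ressqu} yields the claimed identity for $(1+\la^{-1}\C D^2)^{-n}$. The trace formula then follows at once by applying the continuous linear functional $\T{Tr}_s : \sL^1_s(\C H) \to \cc$ termwise to the absolutely convergent series. The substantive analytic content is contained in Lemma \ref{l:omeest}; the main obstacle in the present argument is really only bookkeeping, namely keeping track of both the operator-norm Neumann expansion (used to identify the limit of each diagonal block) and the $\sL^1_s$-norm estimates (used to secure absolute convergence of the regrouped series) simultaneously.
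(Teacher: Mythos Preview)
Your proposal is correct and follows essentially the same approach as the paper: Neumann-expand each factor $(\la+\De_i)^{-1}=(\la+\De)^{-1}(1+Y_i(\la))^{-1}$ via Lemma~\ref{l:resexp} and Lemma~\ref{l:disanaest} to identify the diagonal blocks in operator norm, then invoke the $\sL^1_s$-estimate of Lemma~\ref{l:omeest} together with the polynomial cardinality of the level sets $\C B_k$ to obtain absolute convergence in $\sL^1_s(\C H)$ for $\la$ large. The only cosmetic differences are the order of presentation and that the paper fixes $\ep=3/4$ where you leave $\ep\in(0,1)$ arbitrary.
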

\begin{proof}
Choose constants $C_1,K_1 > 0$ as in Lemma \ref{l:omeest} (with $\ep = 3/4$). We then have that
\[
\begin{split}
\la^n \cd \sum_{k=0}^\infty \sum_{J \in \C B_k} \|\om(J,\la)\|_1^s 
& \leq \sum_{k=0}^\infty \sum_{J \in \C B_k} (k+1) \cd K_1^k \cd \la^{n_0 -k/2 - 1/4} \\
& = \la^{n_0 - 1/4} \sum_{k=0}^\infty (k+1) \cd (K_1/\sqrt{\la})^k \cd {n + k-1 \choose n-1},
\end{split}
\]
for all $\la \geq C_1$. This shows that the sum $\la^n \cd \sum_{k = 0}^\infty \sum_{J \in \C B_k} \om(J,\la)$ converges absolutely in $\sL^1_s(\C H)$ whenever $\la > \T{max}\{C_1,K^2_1\}$.

Now, by Lemma \ref{l:disanaest} we may choose a constant $C_2 > 0$ such that $\|Y_1(\la)\| \, , \, \|Y_2(\la)\| < 1$, for all $\la \geq C_2$. An application of Lemma \ref{l:resexp} then implies that
\[
\begin{split}
(\la + \De_l)^{-n} & = \Big( (\la + \De)^{-1} \big( 1 + Y_l(\la) \big)^{-1} \Big)^n 
= \Big( (\la + \De)^{-1} \cd \sum_{j=0}^\infty (-1)^j Y_l(\la) \Big)^n \\
& = \sum_{J \in \nn_0^n} \om_l(J,\la) = \sum_{k=0}^\infty \sum_{J \in \C B_k} \om_l(J,\la),
\end{split}
\]
for all $\la \geq C_2$ and $l \in \{1,2\}$, where the sums converge absolutely in operator norm. A combination of these observations proves the first part of the proposition since
\[
(1 + \la^{-1} \C D^2)^{-n} = \la^n \cd \ma{cc}{ (\la + \De_2)^{-n} & 0 \\
0 & (\la + \De_1)^{-n}
},
\]
for all $\la > 0$.

The second statement of the proposition is now a consequence of the boundedness of the super trace $\T{Tr}_s : \sL^1_s(\C H) \to \cc$ since $\T{H-Ind}_n(T_\la) := \T{Tr}_s\big( (1 + \la^{-1} \C D^2)^{-n} \big)$ for all $\la > 0$.
\end{proof}

As a first application of the above expansion we show that the anomaly, when it exists, only depends on finitely many of the terms $\om(J,\la)$.

\begin{prop}\label{p:verhig}
Let $\dir_+ : \T{Dom}(\dir_+) \to \C H$ be a closed operator and let $A : \C H \to \C H$ be a bounded operator which together satisfy the conditions in Assumption \ref{a:unb} and Assumption \ref{a:tra} for some $n_0 \in \nn$. Let $n \in \{n_0,n_0 + 1,\ldots\}$.

The anomaly of $\C D_+ := \dir_+ + A$ in degree $n$ then exists if and only if the limit 
\[
\lim_{\la \to \infty} \la^n \cd \sum_{k=0}^{2n_0-1} \sum_{J \in \C B_k} \T{Tr}_s\big( \om(J,\la) \big)
\]
exists. In this case we have that
\[
\T{Ano}(\C D) = \lim_{\la \to \infty} \la^n \cd \sum_{k=0}^{2n_0-1} \sum_{J \in \C B_k} \T{Tr}_s\big( \om(J,\la) \big).
\]
\end{prop}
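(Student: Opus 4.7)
The plan is to start directly from the absolutely convergent expansion
\[
\T{H-Ind}_n(T_\la) = \la^n \cd \sum_{k=0}^\infty \sum_{J \in \C B_k} \T{Tr}_s\big( \om(J,\la) \big)
\]
provided by Proposition \ref{p:degexp}, valid for all $\la$ larger than some constant $C > 0$. Splitting the outer sum at $k = 2n_0 - 1$, it then suffices to show that the tail
\[
R(\la) := \la^n \cd \sum_{k=2n_0}^\infty \sum_{J \in \C B_k} \T{Tr}_s\big( \om(J,\la) \big)
\]
tends to zero as $\la \to \infty$. Once this is established, the proposition follows immediately: existence of $\T{Ano}_n(\C D) = \lim_{\la\to\infty} \T{H-Ind}_n(T_\la)$ is equivalent to the existence of the limit of the finite partial sum over $k \in \{0,\ldots,2n_0-1\}$, and the two limits coincide.

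First I would bound $|\T{Tr}_s(\om(J,\la))| \leq \|\om(J,\la)\|_1^s$ using boundedness of the super trace on $\sL_s^1(\C H)$. Then I would invoke Lemma \ref{l:omeest} with a choice $\ep \in (1/2, 1)$, say $\ep = 3/4$, producing constants $C', K > 0$ with
\[
\|\om(J,\la)\|_1^s \leq (k+1) \cd K^k \cd \la^{n_0 - n - k/2 + 1/2 - \ep}
\]
whenever $J \in \C B_k$ and $\la \geq C'$. Multiplying by $\la^n$ and summing over $J \in \C B_k$, where $|\C B_k| = \binom{n+k-1}{n-1}$, gives
\[
|R(\la)| \leq \la^{1/2 - \ep} \cd \sum_{k=2n_0}^\infty (k+1) \binom{n+k-1}{n-1} K^k \la^{-(k - 2n_0)/2} \cd \la^{n_0 - k/2}\Big|_{k=2n_0}
\]
which after re-indexing $j = k - 2n_0$ factors as $\la^{1/2-\ep}$ times a power series in $K/\sqrt{\la}$ with polynomially growing coefficients. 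For $\la$ sufficiently large this series is bounded uniformly in $\la$, so that $|R(\la)| = O(\la^{1/2 - \ep}) = O(\la^{-1/4})$, and in particular $R(\la) \to 0$.

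The only mildly delicate point is bookkeeping the exponents correctly: the critical observation is that with $\ep > 1/2$ the term $k = 2n_0$ already carries a negative power of $\la$, while each additional unit of $k$ contributes an extra factor $\la^{-1/2}$, ensuring simultaneous convergence of the series and decay of the prefactor. All the hard analytic work (extension across the spectrum via the holomorphic functional calculus, trace norm bounds, control of the asymptotic symbols $\si_\la^m$) has already been invested in Lemma \ref{l:omeest} and Proposition \ref{p:degexp}, so the present statement reduces to a clean geometric series estimate and carries essentially no further obstacles.
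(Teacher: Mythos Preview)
Your proposal is correct and follows essentially the same route as the paper: invoke Proposition \ref{p:degexp} to reduce to showing the tail $\sum_{k\geq 2n_0}$ vanishes as $\la\to\infty$, then apply Lemma \ref{l:omeest} with $\ep = 3/4$ and sum the resulting bounds to obtain $|R(\la)| = O(\la^{-1/4})$. The paper carries out exactly this estimate (with the same choice of $\ep$), so there is no substantive difference; your displayed inequality is notationally a bit garbled (the trailing $\la^{n_0-k/2}\big|_{k=2n_0}$ is not standard), but the exponent bookkeeping you describe in words is correct and matches the paper's computation.
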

\begin{proof}
By Proposition \ref{p:degexp} it is enough to show that
\[
\lim_{\la \to \infty} \la^n \cd \sum_{k=2n_0}^\infty \sum_{J \in \C B_k}\T{Tr}_s\big( \om(J,\la) \big) = 0.
\]

By Lemma \ref{l:omeest} we may choose constants $C,K > 0$ (with $\ep = 3/4$) such that
\[
\la^n \|\om(J,\la)\|_1^s \leq \la^{n_0 - 1/4} \cd (\C B(J)+1) \cd (K/\sqrt{\la})^{\C B(J)}
\]
for all $\la \geq C$ and all $J \in \nn_0^n$.

Let now $\la \geq \T{max}\{2K^2,C\}$. It then follows from the above estimate that
\[
\begin{split}
\big\| \la^n \cd \sum_{k=2n_0}^\infty \sum_{J \in \C B_k} \om(J,\la) \big\|_1^s & \leq \sum_{k = 2n_0}^\infty \sum_{J \in \C B_k} (k+1) \cd
\big( K/\sqrt{\la} \big)^k \cd \la^{n_0 -1/4} \\
& \leq K^{2n_0} \cd \la^{-1/4} 
\cd \sum_{k = 2n_0}^\infty \sum_{J \in \C B_k} (1/2)^{k-2n_0}.
\end{split}
\]
But this proves the theorem. In fact we obtain 
\[
\Big| \la^n \cd \sum_{k=2n_0}^\infty \sum_{J \in \C B_k}\T{Tr}_s\big( \om(J,\la) \big) \Big| = O(\la^{-1/4})
\]
as $\la \to \infty$.
\end{proof}

\section{The anomaly for Dirac operators on $\rr^{2n}$}\label{s:anodir}
For this example we start with the flat Dirac operator on $\rr^{2n}$ defined as follows. Let $\cc_{2n-1}$ denote the Clifford algebra over $\rr^{2n-1}$. Recall that this is the unital $*$-algebra over $\cc$ with $(2n-1)$-generators $e_1,\ldots,e_{2n-1} \in \cc_{2n-1}$ satisfying the relations
\[
e_j = e_j^* \q e_i e_j + e_j e_i = 2 \de_{ij} \q \T{for all }\, i,j \in \{1,\ldots,2n-1\}.
\]
The unit will be denoted by $1 = e_0 \in \cc_{2n-1}$. We fix an irreducible representation $\pi_{2n-1} : \cc_{2n-1} \to \sL(\cc^{2^{n-1}})$. The flat Dirac operator on $\rr^{2n}$ is then given by
\begin{equation}\label{eq:dir}
\dir := \ma{cc}{0 & \dir_- \\ \dir_+ & 0} := \ma{cc}{0 & -\frac{\pa}{\pa x_{2n}} + i \sum_{j=1}^{2n-1} c_j \frac{\pa}{\pa x_j}
\\  \frac{\pa}{\pa x_{2n}} + i \sum_{j=1}^{2n-1} c_j \frac{\pa}{\pa x_j} & 0},
\end{equation}
where $c_j := \pi_{2n-1}(e_j)$, for all $j \in \{1,\ldots,2n-1\}$. The domain of $\dir$ is a direct sum of the Sobolev space $H^1(\rr^{2n}) \ot \cc^{2^{n-1}}$ with itself, thus $\T{Dom}(\dir_+) = \T{Dom}(\dir_-) = H^1(\rr^{2n}) \ot \cc^{2^{n-1}}$. The Clifford matrices act on the second component of the tensor product and the partial differentiation operators act on the first component. We recall that the Sobolev space $H^1(\rr^{2n})$ consists of the elements in $L^2(\rr^{2n})$ which have all weak derivatives of first order in $L^2(\rr^{2n})$.

We note that $\dir_+$ is normal and that the unbounded operator
\[
\dir_- \dir_+ = -\sum_{j=1}^{2n} \frac{\pa^2}{\pa x_j^2} = \dir_+ \dir_- : H^2(\rr^{2n}) \ot \cc^{2^{n-1}} \to L^2(\rr^{2n}) \ot \cc^{2^{n-1}}
\]
is a diagonal operator with the Laplacian on $\rr^{2n}$ in every entry on the diagonal. The notation $H^2(\rr^{2n})$ refers to the second Sobolev space on $\rr^{2n}$.

We let $C^\infty_b(\rr^{2n})$ denote the unital $*$-algebra of smooth complex valued functions with all derivatives bounded.

Let $N \in \nn$. Our strategy is to perturb the flat Dirac operator $\dir$ by a bounded operator corresponding to a $U(N)$-connection. More precisely, we define
\begin{equation}\label{eq:con}
\begin{split}
A : L^2(\rr^{2n}) \ot \cc^N \ot \cc^{2^{n-1}} \to L^2(\rr^{2n}) \ot \cc^N \ot \cc^{2^{n-1}} \q A := i a_{2n} - \sum_{j=1}^{2n-1} c_j a_j,
\end{split}
\end{equation}
where $a_1,\ldots,a_{2n} : \rr^{2n} \to M_N(\cc)$ are selfadjoint elements in $C^\infty_b\big(\rr^{2n}, M_N(\cc)\big)$ acting by multiplication on $L^2(\rr^{2n}) \ot \cc^N$. We are thus interested in the unbounded operator
\begin{equation}\label{eq:perdir}
\C D := \ma{cc}{
0 & \dir_- + A^* \\
\dir_+ + A & 0
} : H^1(\rr^{2n}) \ot \cc^{2^n} \ot \cc^N \to L^2(\rr^{2n}) \ot \cc^{2^n} \ot \cc^N,
\end{equation}
where
\[
\begin{split}
\dir_+ + A & = \big( \frac{\pa}{\pa x_{2n}} + i a_{2n} \big) + i \sum_{j=1}^{2n-1} c_j \big( \frac{\pa}{\pa x_j} + ia_j\big) \q \T{and} \\
\dir_- + A^* & = - \big( \frac{\pa}{\pa x_{2n}} + i a_{2n} \big) + i \sum_{j=1}^{2n-1} c_j \big( \frac{\pa}{\pa x_j} + ia_j\big)
\end{split}
\]

We remark that the bounded operators $A$ and $A^*$ preserve the domain of $\dir_+$ (the first Sobolev space) and that the sum of commutators is given by
\[
\begin{split}
[\dir_+,A^*] + [A,\dir_-] 
& = 2 \big[\frac{\pa}{\pa x_{2n}},-\sum_{j=1}^{2n-1} c_j a_j\big] + 2 \big[ i a_{2n},i \sum_{j=1}^{2n-1} c_j \frac{\pa}{\pa x_j} \big]  \\
& = 2 \sum_{j=1}^{2n-1} c_j \Big(  \frac{\pa a_{2n}}{\pa x_j} - \frac{\pa a_j}{\pa x_{2n}} \Big)
\end{split}
\]
This shows that the conditions in Assumption \ref{a:unb} are satisfied.

We note furthermore that 
\[
[A, A^*] = 2\big[i a_{2n}, -\sum_{j=1}^{2n-1} c_j a_j \big] = 2i \sum_{j=1}^{2n-1}c_j [a_j,a_{2n}].
\]
The bounded extension of the commutator $[\C D_+ ,\C D_-]$ is therefore given by the expression
\[
F := 2 \cd \sum_{j=1}^{2n-1} c_j \Big(  \frac{\pa a_{2n}}{\pa x_j} - \frac{\pa a_j}{\pa x_{2n}} + i [a_j,a_{2n}] \Big),
\]

The smoothness and boundedness conditions on the maps $a_1,\ldots,a_{2n} : \rr^{2n} \to M_N(\cc)$ imply that $A$ lies in $\T{OP}^0$ with respect to the Laplacian $\De : H^2(\rr^{2n}) \ot \cc^{2^{n-1}} \ot \cc^N \to L^2(\rr^{2n}) \ot \cc^{2^{n-1}} \ot \cc^N$. 

To deal with the trace class condition on $(1+\De)^{-i-1} F (1 + \De)^{-n +i}$ for $i \in \{-1,0,\ldots,n-1\}$ we make the following standing assumption:

\begin{assu}\label{a:fct}
Assume that the maps $a_1,\ldots,a_{2n-1} : \rr^{2n} \to M_N(\cc)$ are selfadjoint elements in $C_b^\infty\big(\rr^{2n},M_N(\cc)\big)$ such that the partial derivative
\[
\frac{\pa a_j}{\pa x_{2n}} : \rr^{2n} \to M_N(\cc)
\]
has compact support for each $j \in \{1,\ldots,2n-1\}$. Assume furthermore that $a_{2n} = 0$.
\end{assu}

The above assumption implies that 
\[
F = - 2 \cd \sum_{j=1}^{2n-1} c_j \frac{\pa a_j}{\pa x_{2n}} : \rr^{2n} \to M_{2^{n-1}}(\cc) \ot M_N(\cc)
\]
is smooth and compactly supported. It then follows from \cite[Theorem 4.1]{Sim:TIA} and \cite[Theorem 4.5]{Sim:TIA} that $(1 + \De)^{-i-1} F (1 + \De)^{-n+i + \ep}$ is of trace class for all $i \in \{-1,0,\ldots,n-1\}$ and all $\ep \in (0,1)$.

We recollect what we have proved thus far in the following:

\begin{thm}\label{t:exihom}
Suppose that the maps $a_1,\ldots,a_{2n-1},a_{2n} : \rr^{2n} \to M_N(\cc)$ satisfy the conditions in Assumption \ref{a:fct}. Then the pair consisting of the closed unbounded operator $\dir_+$ and the bounded operator $A$ as defined in \eqref{eq:dir} and \eqref{eq:con} satisfies the conditions in Assumption \ref{a:unb} and Assumption \ref{a:tra} for $n_0 = n$. In particular, there is a well-defined homological index in degree $m$, $\T{H-Ind}_m(T_\la)$ for each $\la \in (0,\infty)$ and each $m \geq n$.
\end{thm}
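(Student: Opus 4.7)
The proof is essentially a recollection, assembling the verifications scattered through Section \ref{s:anodir} and then invoking Theorem \ref{t:tradifpow}. My plan is to check the three input conditions in turn.

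First, I would verify Assumption \ref{a:unb} for the pair $(\dir_+, A)$. Normality of $\dir_+$ is immediate from the explicit formula \eqref{eq:dir}: the product $\dir_-\dir_+ = \dir_+\dir_-$ equals the scalar Laplacian tensored with the identity on the spinor and $\cc^N$ factors, with common domain $H^2(\rr^{2n})\otimes \cc^{2^{n-1}}\otimes\cc^N$. Invariance of $\T{Dom}(\dir_+) = H^1(\rr^{2n})\otimes\cc^{2^{n-1}}\otimes\cc^N$ under both $A$ and $A^*$ is clear, because by \eqref{eq:con} these operators act as multiplication by matrix-valued elements of $C_b^\infty(\rr^{2n},M_N(\cc))$, which preserve every Sobolev space. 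Finally, the boundedness of $[\dir_+,A^*]+[A,\dir_-]$ has already been computed in the text as $2\sum_{j=1}^{2n-1} c_j\bigl(\pa a_{2n}/\pa x_j - \pa a_j/\pa x_{2n}\bigr)$, and the coefficient matrices are bounded since $a_1,\ldots,a_{2n}\in C_b^\infty$.

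Second, I would establish that $A \in \T{OP}^0$ with respect to the scalar Laplacian $\De = \dir_+\dir_-$. Since the components of $A$ are multiplication operators by matrix-valued functions in $C_b^\infty(\rr^{2n},M_N(\cc))$, every Sobolev space $\C H^s = \T{Dom}(\De^{s/2})$ is invariant, and each iterated commutator $\de^k(A) = [\De^{1/2},\ldots,[\De^{1/2},A]\ldots]$ is (the bounded extension of) multiplication by a bounded smooth matrix-valued function plus lower-order pieces; this is a standard pseudodifferential computation on Euclidean space.

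Third, and most substantively, I need Assumption \ref{a:tra} for $n_0 = n$, that is, trace-class membership of $(1+\De)^{-i-1} F (1+\De)^{-n+i+\ep}$ for $i \in \{-1,0,\ldots,n-1\}$ and $\ep \in (0,1)$. Under Assumption \ref{a:fct} we have $a_{2n}=0$ and $\pa a_j/\pa x_{2n}$ compactly supported, whence
\[
F = -2\sum_{j=1}^{2n-1} c_j\,\frac{\pa a_j}{\pa x_{2n}}
\]
is a smooth, compactly supported matrix-valued multiplication operator. I would invoke Simon's trace-ideal estimates \cite[Theorems 4.1 and 4.5]{Sim:TIA}, which bound the Schatten $\C L^1$-norm of operators of the form $f(X) g(-i\nabla)$ on $\rr^{2n}$ in terms of $L^p$-norms of the symbols. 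The total resolvent weight is $(1+\De)^{-(n+\ep)}$, and since $n$ is exactly half the ambient dimension, the strict inequality $n+\ep > n$ places us past the trace-class threshold; since $F$ has compact support it lies in every $L^p$, so Simon's criterion applies uniformly in $i$.

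The main obstacle is really just the bookkeeping in step three—tracking Schatten exponents and making sure the dimension count works out for every index $i\in\{-1,\ldots,n-1\}$—but nothing deeper is required. Once Assumptions \ref{a:unb} and \ref{a:tra} are in place with $n_0 = n$, the existence of $\T{H-Ind}_m(T_\la)$ for all $m \geq n$ and $\la > 0$ is a direct application of Theorem \ref{t:tradifpow}.
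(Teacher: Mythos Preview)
Your proposal is correct and follows essentially the same approach as the paper: the theorem is explicitly introduced there as ``We recollect what we have proved thus far,'' and the preceding text carries out precisely the three verifications you outline --- normality and the domain/commutator checks for Assumption \ref{a:unb}, the observation that $A\in\T{OP}^0$ from smoothness and boundedness of the $a_j$, and the trace-class condition via \cite[Theorems 4.1 and 4.5]{Sim:TIA} applied to the compactly supported $F$ --- followed by an appeal to Theorem \ref{t:tradifpow}.
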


The unbounded operators $V_1 \T{ and } V_2 : \T{Dom}(\dir_+) \to L^2(\rr^{2n}) \ot \cc^{2^{n-1}}$ turn out to be first order differential operators. Indeed, we have that
\[
\begin{split}
V_1 & = A \dir_- + \dir_+ A^* + A A^* \\
& = -i \sum_{j,k=1}^{2n-1} c_j c_k a_j \frac{\pa}{\pa x_k} - i \sum_{j,k=1}^{2n-1} c_k c_j \frac{\pa}{\pa x_k} a_j
- \big[ \frac{\pa}{\pa x_{2n}}, \sum_{j=1}^{2n-1} c_j a_j \big] + \sum_{j,k = 1}^{2n-1} c_j c_k a_j a_k \\
& = i \sum_{1 \leq j < k \leq 2n-1}  c_j c_k \big( \frac{\pa a_j}{\pa x_k} - \frac{\pa a_k}{\pa x_j} -i [a_j,a_k]\big) -i \cd \sum_{j= 1}^{2n-1} \big( a_j \frac{\pa}{\pa x_j} + \frac{\pa}{\pa x_j} a_j + i a_j^2\big) + \frac{1}{2} F 
\end{split}
\]
And it then follows from the identity $F(\xi) = (V_1 - V_2)(\xi)$ for all $\xi \in \T{Dom}(\dir_-)$ that
\[
V_2 = -i \sum_{j= 1}^{2n-1} \big( a_j \frac{\pa}{\pa x_j} + \frac{\pa}{\pa x_j} a_j + i a_j^2\big)
- \frac{1}{2} F + i \sum_{1 \leq j < k \leq 2n-1}  c_j c_k \big( \frac{\pa a_j}{\pa x_k} - \frac{\pa a_k}{\pa x_j} -i [a_j,a_k] \big).
\]

\begin{notation}\label{n:difope}
Define the first order differential operator
\[
S := -i  \sum_{j= 1}^{2n-1} \big( 2 a_j \frac{\pa}{\pa x_j} + \frac{\pa a_j}{\pa x_j} + i a_j^2\big) : H^1(\rr^{2n}) \ot \cc^{N \cd 2^{n-1}} \to L^2(\rr^{2n}) \ot \cc^{N \cd 2^{n-1}}.
\]
Define the multiplication operators
\[
\begin{split}
F & := - 2 \cd \sum_{j=1}^{2n-1} c_j \frac{\pa a_j}{\pa x_{2n}} : L^2(\rr^{2n}) \ot \cc^{N \cd 2^{n-1}} \to L^2(\rr^{2n}) \ot \cc^{N \cd 2^{n-1}} \q \T{and} \\
G & := i \sum_{1 \leq j < k \leq 2n-1}  c_j c_k \big( \frac{\pa a_j}{\pa x_k} - \frac{\pa a_k}{\pa x_j} -i [a_j,a_k] \big) \\ 
& \qqqq : L^2(\rr^{2n}) \ot \cc^{N \cd 2^{n-1}} \to L^2(\rr^{2n}) \ot \cc^{N \cd 2^{n-1}}.
\end{split}
\]
\end{notation}

Using the above notation, we have that $V_1 = S + F/2 + G$ and $V_2 = S - F/2 + G$.

It is important, before continuing, to relate the bounded operators $F$ and $G$ to the $U(N)$-connection used for constructing the perturbation of the flat Dirac operator $\dir$. 

Since $a_{2n} = 0$, the connection $1$-form is given by
\[
\om := i \sum_{j=1}^{2n-1} a_j dx_j \in \Om^1\big(\rr^{2n},M_N(\cc)\big).
\]
And it follows that the curvature operator is induced by the $2$-form
\[
\begin{split}
R := \om \we \om + d\om & = i \sum_{1 \leq j < k \leq 2n-1} \Big( \frac{\pa a_k}{\pa x_j} - \frac{\pa a_j}{\pa x_k} + i [a_j,a_k] \Big) dx_j \we dx_k  \\
& \qqq -i \sum_{j = 1}^{2n-1} \frac{\pa a_j}{\pa x_{2n}} dx_j \we dx_{2n}.
\end{split}
\]
Let $\pi_{2n} : \cc_{2n} \to \sL(\cc^{2^n})$ denote the representation of the Clifford algebra with $(2n)$-generators $e_1,\ldots,e_{2n}$ used for the definition of our Dirac-type operator $\C D$. The Clifford contraction of the curvature form can then be written as
\begin{equation}\label{eq:confg}
\begin{split}
\G K & := \sum_{1 \leq j < k \leq 2n} \pi_{2n}(e_j e_k) R\big(\frac{\pa}{\pa x_j},\frac{\pa}{\pa x_k}\big) \\
& = i \sum_{1 \leq j < k \leq 2n-1} \pi_{2n}(e_j e_k) \Big( \frac{\pa a_k}{\pa x_j} - \frac{\pa a_j}{\pa x_k} + i [a_j,a_k] \Big) \\
& \qqq -i \sum_{j = 1}^{2n-1} \pi_{2n}(e_j e_{2n}) \frac{\pa a_j}{\pa x_{2n}} \\
& = \ma{cc}{
F/2 - G & 0 \\
0 & -F/2 - G
}.
\end{split}
\end{equation}
We may thus rewrite the differential operator $V := \ma{cc}{V_2 & 0 \\ 0 & V_1}$ as $V = S - \G K$.

{\it
The goal of the next Section is to provide a local formula for the anomaly (in degree $m \geq n$) of $\C D_+ = \dir_+ + A$ in terms of the above Clifford contraction $\G K$}.

\section{A local formula for the anomaly}\label{s:locano}
Throughout this section we will operate in the context of Section \ref{s:anodir}. The conditions in Assumption \ref{a:fct} on the $U(N)$-connection will in particular be in effect. We are then aiming to compute the anomaly of the perturbed Dirac operator $\C D_+ = \dir_+ + A$ as the integral of a $(2n)$-form associated to the Clifford contraction of the curvature form, that is, in terms of a polynomial in the operators $F$ and $G$.

Our first concern is to eliminate a substantial amount of terms in the general resolvent expansion of the homological index provided in Proposition \ref{p:degexp}. To do this we start with a short preliminary on Clifford algebras.

\subsection{Preliminaries on Clifford algebras}\label{ss:precli}
We define irreducible representations of the Clifford algebra $\cc_k$ for all $k \in \nn$ recursively as follows. For $k = 1$, let
\[
\pi_1 : \cc_1 \to \sL(\cc) \q \pi : e_1 \mapsto 1.
\]
For $k = 2m$, let $\pi_{2m} : \cc_{2m} \to \sL(\cc^{2^m})$ be defined by
\[
\pi_{2m} : e_i \mapsto \fork{ccc}{
\pi_{2m-1}(e_i) \ot \ma{cc}{0 & 1 \\ 1 & 0} & \T{for} & i \in \{1,\ldots,2m-1\} \\
1 \ot \ma{cc}{0 & i \\ -i & 0} & \T{for} & i = 2m.
}
\]
For $k = 2m+1 > 1$, let $\pi_{2m+1} : \cc_{2m+1} \to \sL(\cc^{2^m})$ be defined by
\[
\pi_{2m+1} : e_i \mapsto \fork{ccc}{
\pi_{2m}(e_i)  & \T{for} & i \in \{1,\ldots,2m\} \\
\ma{cc}{
1_{2^{m-1}} & 0 \\
0 & - 1_{2^{m-1}}
} & \T{for} & i = 2m+1.
}
\]

\begin{notation}
Let $k \in \nn$. For each subset $I = \{i_1,\ldots,i_j\} \su \{1,\ldots,k\}$ with $i_1 < \ldots < i_j$, define
\[
e_I := e_{i_1} \clc e_{i_j} \in \cc_k \q e_{\emptyset} := 1 \in \cc_k.
\]
The odd part of $\cc_k$ is the vector subspace $\cc_{\T{odd}} \su \cc_k$ defined by
\[
\cc_{\T{odd}} := \T{span}_{\cc}\big\{ e_{i_1} \clc e_{i_j} \, | \, 1 \leq i_1 < \ldots < i_j \leq k, \, j \, \T{ odd}\big\}.
\]
The even part of $\cc_k$ is the vector subspace $\cc_{\T{ev}} \su \cc_k$ defined by
\[
\cc_{\T{ev}} := \T{span}_{\cc}\big\{ e_{i_1} \clc e_{i_j} \, | \, 1 \leq i_1 < \ldots < i_j \leq k, \, j \, \T{ even}\big\}.
\]
\end{notation}

\begin{lemma}\label{l:tracli}
Let $m \in \nn$. Let $I \su \{1,\ldots,2m-1\}$, then
\[
\T{Tr}\big( \pi_{2m-1}(e_I) \big) = \fork{ccc}{
2^{m-1} & \T{for} & I = \emptyset \\
(-2i)^{m-1} & \T{for} & I = \{1,\ldots,2m-1\} \\
0 & \T{for} & I \neq \emptyset \, , \, \{1,\ldots,2m-1\}.
}
\]
\end{lemma}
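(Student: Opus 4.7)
The case $I = \emptyset$ is immediate since $\pi_{2m-1}(1) = 1_{2^{m-1}}$, giving trace $2^{m-1}$. The rest of the argument splits into a vanishing case and the computation of the top form.

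For the vanishing case, the plan is to find, for every $I \neq \emptyset, \{1,\ldots,2m-1\}$, an index $j \in \{1,\ldots,2m-1\}$ such that $e_j e_I e_j = -e_I$ in $\cc_{2m-1}$; then cyclicity of the trace forces $\T{Tr}(\pi_{2m-1}(e_I)) = \T{Tr}(\pi_{2m-1}(e_j e_I e_j)) = -\T{Tr}(\pi_{2m-1}(e_I))$. To produce such a $j$, I would distinguish the parity of $|I|$. If $|I|$ is odd, I take $j \notin I$ (possible because $I \neq \{1,\ldots,2m-1\}$); then $e_j$ anticommutes with each of the $|I|$ factors of $e_I$, so $e_j e_I = (-1)^{|I|} e_I e_j$ and $e_j e_I e_j = -e_I$. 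If $|I|$ is even and nonempty, I instead pick $j \in I$; writing $e_I = \pm\, e_{I\setminus\{j\}} e_j$ and using that $e_j$ anticommutes with the $|I|-1$ factors of $e_{I\setminus\{j\}}$, one finds $e_j e_I e_j = (-1)^{|I|-1} e_I = -e_I$. Either way the cyclicity argument applies.

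For the top-form case $I = \{1,\ldots,2m-1\}$, I proceed by induction on $m$, the base $m=1$ being clear since $\pi_1(e_1) = 1$. For the inductive step, unfold the recursive definition: for $i \le 2m-3$ we have $\pi_{2m-1}(e_i) = \pi_{2m-3}(e_i) \otimes \sigma_x$ (where $\sigma_x = \bigl(\begin{smallmatrix} 0 & 1 \\ 1 & 0 \end{smallmatrix}\bigr)$), while $\pi_{2m-1}(e_{2m-2}) = 1 \otimes \bigl(\begin{smallmatrix} 0 & i \\ -i & 0\end{smallmatrix}\bigr)$ and $\pi_{2m-1}(e_{2m-1}) = 1 \otimes \sigma_z$. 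Multiplying, the tensor factors factor as
\[
\pi_{2m-1}(e_1 \cdots e_{2m-1}) = \pi_{2m-3}(e_1 \cdots e_{2m-3}) \otimes \sigma_x^{2m-3} \cdot \bigl(1 \otimes \tfrac{1}{i}\sigma_y \cdot \sigma_z\bigr).
\]
Since $\sigma_x^{2m-3} = \sigma_x$ and an elementary $2\times 2$ calculation gives $\sigma_x \cdot \bigl(\begin{smallmatrix} 0 & i \\ -i & 0\end{smallmatrix}\bigr) \cdot \sigma_z = -i\,1_2$, this collapses to
\[
\pi_{2m-1}(e_1 \cdots e_{2m-1}) = -i \, \pi_{2m-3}(e_1 \cdots e_{2m-3}) \otimes 1_2.
\]
Taking the ordinary trace multiplies by $\T{Tr}(1_2) = 2$, so the induction hypothesis yields $\T{Tr}(\pi_{2m-1}(e_I)) = -2i \cdot (-2i)^{m-2} = (-2i)^{m-1}$, as claimed.

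The only potentially delicate point is keeping track of signs in the vanishing argument; once the two parity subcases are isolated, everything else is a mechanical application of the recursive definition and the cyclicity of the trace.
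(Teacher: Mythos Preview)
Your proof is correct and follows essentially the same route as the paper: trivial case $I=\emptyset$, a parity split for the vanishing case using cyclicity of the trace together with the Clifford relations, and the same induction via the recursive definition for $I=\{1,\ldots,2m-1\}$. The only cosmetic difference is that in the even-$|I|$ case the paper cycles the last generator to the front, whereas you conjugate by a generator $e_j$ with $j\in I$; both produce the sign $(-1)^{|I|-1}=-1$ in one step.
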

\begin{proof}
The fact that $\T{Tr}(\pi_{2m-1}(1)) = 2^{m-1}$ is obvious. We may thus suppose that $I \neq \emptyset$. 

Suppose that the number of elements $j$ in $I = \{i_1,\ldots,i_j\}$ is even. Then
\[
\T{Tr}\big( \pi_{2m-1}(e_I) \big) = - \T{Tr}\big( \pi_{2m-1}(e_{i_j} \cd e_{i_1} \clc e_{i_{j-1}}) \big)
= - \T{Tr}\big( \pi_{2m-1}(e_I) \big).
\]
This shows that $\T{Tr}\big( \pi_{2m-1}(e_I) \big) = 0$ in this case.

We may thus suppose that the number of elements $j$ in $I$ is odd.

Suppose that $j < 2m-1$. Choose an element $k \in \{1,\ldots,2m-1\}\sem J$. The endomorphism $\pi_{2m-1}(e_I) : \cc^{2^{m-1}} \to \cc^{2^{m-1}}$ then anticommutes with the selfadjoint unitary operator $\pi_{2m-1}(e_k)$. This implies that $\T{Tr}\big( \pi_{2m-1}(e_I) \big) = 0$.

We may thus suppose that $I = \{1,\ldots,2m-1\}$. The identity $\T{Tr}\big( \pi_{2m-1}(e_I)\big) = (-2i)^{m-1}$ then follows by induction. Indeed, for $m > 1$ we have that
\[
\begin{split}
\pi_{2m-1}(e_I) & = \pi_{2m-3}(e_1 \clc e_{2m-3}) \ot \ma{cc}{0 & 1 \\ 1 & 0} \cd \ma{cc}{0 & i \\ -i & 0} \cd \ma{cc}{1 & 0 \\ 0 & - 1} \\
& = \pi_{2m-3}(e_1 \clc e_{2m-3}) \ot \ma{cc}{-i & 0 \\ 0 & -i},
\end{split}
\]
which implies that $\T{Tr}\big( \pi_{2m-1}(e_I) \big) = (-2i) \cd \pi_{2m-3}(e_1 \clc e_{2m-3})$.
\end{proof}

\subsection{Vanishing of lower Clifford terms}
Let $m \in \{n,n+1,\ldots\}$. Recall from Proposition \ref{p:degexp} that the homological index of $T_\la := \la^{-1/2} \C D_+ (1+ \la^{-1} \C D_- \C D_+)$ in degree $m$ can be written as
\[
\T{H-Ind}_m(T_\la) = \la^m \cd \sum_{k=0}^\infty \sum_{J \in \C B_k} \T{Tr}_s\big( \om(J,\la) \big),
\]
where the sum converges absolutely whenever $\la \geq C$ for an appropriate constant $C > 0$. The index sets are given by $\C B_k := \big\{ J = (j_1,\ldots,j_m) \in \nn_0^m \, | \, j_1 \plp j_m = k\big\}$ for each $k \in \nn_0$ whereas the bounded operator $\om(J,\la) = \ma{cc}{\om_2(J,\la) & 0 \\ 0 & \om_1(J,\la)}$ is defined in Notation \ref{n:omedeg} for each $J \in \nn_0^m$ and each $\la > 0$.

Since the irreducible representation $\pi_{2n} : \cc_{2n} \to M_{2^n}(\cc)$ is an isomorphism we have a vector space decomposition $M_{2^n}(\cc) \cong \bop_{I \su \{1,\ldots,2n\}} \cc \pi_{2n}(e_I)$. In particular we have an idempotent
\begin{equation}\label{eq:cliide}
E_I : \sL(L^2(\rr^{2n})) \ot M_N(\cc) \ot M_{2^n}(\cc) \to \sL(L^2(\rr^{2n})) \ot M_N(\cc) \ot M_{2^n}(\cc) 
\end{equation}
with image, $\T{Im}(E_I) = \sL(L^2(\rr^{2n})) \ot M_N(\cc) \ot \cc \pi_{2n}(e_I)$, for each $I \su \{1,\ldots,2n\}$.

We shall see in this section that the "lower order terms" do not contribute to the homological index. More precisely, we will prove that the super trace of the terms $\om(J,\la)$ is trivial whenever $J \in \C B_k$ for some $k \in \{0,\ldots,n-1\}$ and $\la > 0$.

\begin{lemma}\label{l:vollef}
Let $J \in \nn_0^m$ and let $\la > 0$. Then
\[
\T{Tr}_s\big(\om(J,\la)\big) = \T{Tr}_s\big( E_{\{1,\ldots,2n\}}(\om(J,\la)) \big).
\]
\end{lemma}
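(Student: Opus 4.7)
The plan is to identify $\C H^{(2)}=\C H\op\C H$ with $L^2(\rr^{2n})\ot\cc^N\ot\cc^{2^{n-1}}\ot\cc^2$ so that the grading $\gamma=\mathrm{diag}(1,-1)$ corresponds to $1\ot\sigma_z$ on $\cc^{2^{n-1}}\ot\cc^2$. Using the recursion $\pi_{2n}(e_j)=\pi_{2n-1}(e_j)\ot\sigma_x$ for $j<2n$ and $\pi_{2n}(e_{2n})=1\ot(-\sigma_y)$, each Clifford basis element $\pi_{2n}(e_I)$ admits an explicit tensor-product description: for $I\subseteq\{1,\ldots,2n-1\}$ it equals $\pi_{2n-1}(e_I)\ot\sigma_x^{|I|}$, while for $I=I'\cup\{2n\}$ with $I'\subseteq\{1,\ldots,2n-1\}$ it equals $\pi_{2n-1}(e_{I'})\ot\sigma_x^{|I'|}(-\sigma_y)$. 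The operator $\om(J,\la)$ lies in $\sL(L^2\ot\cc^N)\ot M_{2^n}(\cc)$ because $Y_l(\la)=V_l(\la+\De)^{-1}$ combines the Clifford-scalar resolvent with $V_l=S\pm F/2+G$, whose Clifford content lies in the finite-dimensional subalgebra of $M_{2^{n-1}}(\cc)$ generated by $\{1,c_j,c_jc_k\}$; expanding in the Clifford basis, $\om(J,\la)=\sum_I T_I\ot\pi_{2n}(e_I)$ with $T_I\in\sL(L^2\ot\cc^N)$.

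By linearity of the super trace, and since $\om(J,\la)\in\sL^1_s(\C H)$ by Lemma~\ref{l:omeest}, the lemma reduces to showing $\T{Tr}_s(T_I\ot\pi_{2n}(e_I))=0$ for every $I\neq\{1,\ldots,2n\}$; trace class-ness of each individual $T_I$ is inherited from that of $P_+\om(J,\la)P_+-P_-\om(J,\la)P_-$ by slicing against the linearly independent basis of the block-diagonal part of $M_{2^n}(\cc)$. I then run a case analysis on $I$. When $|I|$ is odd, $\pi_{2n}(e_I)$ is block off-diagonal with respect to $\cc^2$, so $P_\pm(T_I\ot\pi_{2n}(e_I))P_\pm=0$ and the super trace vanishes trivially. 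When $|I|$ is even with $2n\notin I$, the tensor $T_I\ot\pi_{2n-1}(e_I)\ot 1$ has equal upper and lower diagonal blocks, so the super-difference has trace zero. When $|I|$ is even with $2n\in I$ but $I\neq\{1,\ldots,2n\}$, using $\pi_{2n}(e_I)=\pm\pi_{2n-1}(e_{I'})\ot(-i\sigma_z)$ with $I':=I\setminus\{2n\}$, a direct computation gives
\[
\T{Tr}_s\bigl(T_I\ot\pi_{2n}(e_I)\bigr)=\mp 2i\,\T{Tr}(T_I)\cdot\T{Tr}\bigl(\pi_{2n-1}(e_{I'})\bigr),
\]
and this vanishes by Lemma~\ref{l:tracli} because $I'$ has odd cardinality and is distinct from both $\emptyset$ and $\{1,\ldots,2n-1\}$.

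Only $I=\{1,\ldots,2n\}$ survives the elimination, giving exactly the statement of the lemma. The main technical obstacle is the trace-class bookkeeping for the individual Clifford components $T_I$, which justifies splitting $\T{Tr}_s\bigl(\om(J,\la)\bigr)$ into contributions from each $E_I$; once this is in hand, the lemma follows from direct matrix computation together with Lemma~\ref{l:tracli}.
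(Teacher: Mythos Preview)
Your proof is correct and reaches the same conclusion as the paper, but by a more laborious route. The paper's argument is considerably shorter: it identifies the grading $\gamma=\mathrm{diag}(1,-1)$ on $\cc^{2^n}$ with the extra Clifford generator $\pi_{2n+1}(e_{2n+1})$, so that the matrix super trace on $M_{2^n}(\cc)$ becomes $T\mapsto\T{Tr}\bigl(T\cdot\pi_{2n+1}(e_{2n+1})\bigr)$. The full super trace $\T{Tr}_s$ on $\sL^1_s(\C H)$ then factors as this partial super trace over $\cc^{2^n}$ followed by the ordinary operator trace on $\sL^1\bigl(L^2(\rr^{2n})\ot\cc^N\bigr)$. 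Applied to a Clifford basis element, the matrix super trace of $\pi_{2n}(e_I)$ is $\pm\T{Tr}\bigl(\pi_{2n+1}(e_{I\cup\{2n+1\}})\bigr)$, and a single invocation of Lemma~\ref{l:tracli} (for $\cc_{2n+1}$) shows this vanishes unless $I=\{1,\ldots,2n\}$. This sidesteps your three-way case split entirely, as well as the trace-class bookkeeping you flag as the main technical obstacle. Your approach, by contrast, reduces (in the nontrivial Case~3) to Lemma~\ref{l:tracli} for $\cc_{2n-1}$ and implicitly relies on the fact that the odd-degree elements $\{\pi_{2n-1}(e_{I'}):|I'|\text{ odd}\}$ form a basis of $M_{2^{n-1}}(\cc)$, so that the coefficients $T_{I'\cup\{2n\}}$ can be sliced out of the trace-class operator $\om_2(J,\la)-\om_1(J,\la)$. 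One small imprecision: you assert that ``each individual $T_I$'' is trace class, but this is only guaranteed for those $T_I$ with $2n\in I$ and $|I|$ even; fortunately these are exactly the ones needed in Case~3, while in Cases~1 and~2 the super-difference $P_+(\cdot)P_+-P_-(\cdot)P_-$ vanishes identically and no trace-class hypothesis is required.
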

\begin{proof}
Let us define the super trace $\T{Tr}_s : M_{2^n}(\cc) \to \cc$, $\T{Tr}_s : T \mapsto \T{Tr}\big( T \cd \pi_{2n+1}(e_{2n+1}) \big)$, where $\T{Tr} : M_{2^n}(\cc) \to \cc$ denotes the usual matrix trace.

We may then rewrite the super trace $\T{Tr}_s : \sL^1_s\big( L^2(\rr^{2n}) \ot \cc^N \ot \cc^{2^n} \big) \to \cc$ as the composition $\T{Tr} \ci (1 \ot 1 \ot \T{Tr}_s)$, where $\T{Tr} : \sL^1\big( L^2(\rr^{2n}) \ot \cc^N \big) \to \cc$ denotes the usual operator trace.

An application of Lemma \ref{l:tracli} now proves the lemma.
\end{proof}

Before we continue, we make a short digression on the form of the bounded operator $\om(J,\la)$, for $J \in \nn_0^m$ and $\la > 0$. Recall from the discussion after Notation \ref{n:difope} that the first order differential operator $V := \ma{cc}{V_2 & 0 \\ 0 & V_1}$ can be written as $V = S - \G K$, where $\G K = \ma{cc}{F/2 - G & 0 \\ 0 & -F/2 - G}$ denotes the Clifford contraction of the curvature form associated with our fixed $U(N)$-connection. 

Notice next that the Clifford algebra $\cc_{2n}$ is a filtered algebra $\{0\} \su C_0 \su \ldots \su C_{2n-1} \su C_{2n} = \cc_{2n}$. The subspace $C_i \su \cc_{2n}$ is defined by
\begin{equation}\label{eq:filcli}
C_i := \T{span}_{\cc}\big\{ e_I \, | \, I \su \{1,\ldots,2n\} \, , \, |I| \leq i\big\}
\end{equation}
for each $i \in \{0,\ldots,2n\}$, where $|I|$ denotes the number of elements of a subset $I \su \{1,\ldots,2n\}$. 

Let now $\la > 0$ and let us apply the notation 
\[
V_\la := V \cd (\la + \De)^{-1} \q S_\la := S \cd (\la + \De)^{-1} \q \G K_\la := \G K \cd (\la + \De)^{-1}.
\]
The bounded operator $S_\la$ then lies in the subspace $\sL\big( L^2(\rr^{2n}) \ot \cc^N\big) \ot \cc 1_{2^n}$ and the bounded operator $\G K_\la$ lies in the subspace $\sL\big( L^2(\rr^{2n}) \ot \cc^N\big) \ot \pi_{2n}(C_2)$ of $\sL\big( L^2(\rr^{2n}) \ot \cc^{N\cd 2^n} \big)$.

Let $J = (j_1,\ldots,j_m) \in \nn_0^m$. The above discussion then shows that
\[
\om(J,\la) = (\la + \De)^{-1} V_\la^{j_1} \clc (\la + \De)^{-1} V_\la^{j_m} \in \sL\big( L^2(\rr^{2n}) \ot \cc^N\big) \ot \pi_{2n}(C_{2\cd \C B(J)}),
\]
where we put $C_i := \cc_{2n}$ whenever $i \geq 2n$. Recall here that $\C B(J) = j_1 \plp j_m$.

A combination of these observations and Lemma \ref{l:vollef} implies the following result. It provides a significant reduction of terms in the resolvent expansion for the homological index.

\begin{prop}\label{p:low}
Suppose that the maps $a_1,\ldots,a_{2n} : \rr^{2n} \to M_N(\cc)$ satisfy the conditions in Assumption \ref{a:fct}. Then
\[
\T{Tr}_s\big( \om(J,\la) \big) = 0,
\]
for all $\la > 0$  and all $J = (j_1,\ldots,j_m) \in \nn_0^m$ with $j_1 \plp j_m \leq n-1$. 
\end{prop}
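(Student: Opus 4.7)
The plan is to combine Lemma~\ref{l:vollef} with the Clifford filtration observation spelled out in the paragraphs between Notation~\ref{n:difope} and the statement of the proposition. By Lemma~\ref{l:vollef}, $\T{Tr}_s\big(\om(J,\la)\big)$ picks out only the component of $\om(J,\la)$ lying in the top-degree Clifford subspace $\sL\big(L^2(\rr^{2n}) \ot \cc^N\big) \ot \cc\, \pi_{2n}(e_{\{1,\ldots,2n\}})$, i.e.\ the image of $E_{\{1,\ldots,2n\}}$ from \eqref{eq:cliide}. So the task reduces to showing that when $\C B(J) \leq n-1$ the idempotent $E_{\{1,\ldots,2n\}}$ annihilates $\om(J,\la)$.

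The key observation is that the Clifford filtration $\{C_i\}$ from \eqref{eq:filcli} is submultiplicative: the Clifford relations $e_p e_q + e_q e_p = 2\de_{pq}$ give $\pi_{2n}(C_i) \cd \pi_{2n}(C_j) \su \pi_{2n}(C_{i+j})$, since reducing a length-$(i+j)$ word via $e_p^2 = 1$ and anticommutation produces only words of length $\leq i+j$. Since $S_\la \in \sL\big(L^2(\rr^{2n}) \ot \cc^N\big) \ot \cc 1_{2^n}$ is Clifford-scalar, while $\G K_\la \in \sL\big(L^2(\rr^{2n}) \ot \cc^N\big) \ot \pi_{2n}(C_2)$, the decomposition $V_\la = S_\la - \G K_\la$ yields $V_\la \in \sL\big(L^2(\rr^{2n}) \ot \cc^N\big) \ot \pi_{2n}(C_2)$, and iterating the filtration bound gives
\[
V_\la^{j_k} \in \sL\big(L^2(\rr^{2n}) \ot \cc^N\big) \ot \pi_{2n}(C_{2 j_k}).
\]
Since $(\la + \De)^{-1}$ is also Clifford-scalar, multiplying the $m$ blocks yields $\om(J,\la) \in \sL\big(L^2(\rr^{2n}) \ot \cc^N\big) \ot \pi_{2n}\big(C_{2 \C B(J)}\big)$.

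When $\C B(J) \leq n-1$ we have $2\C B(J) \leq 2n-2$, and the top Clifford word $e_{\{1,\ldots,2n\}}$ lies in $C_{2n}\sem C_{2n-1}$, hence is not contained in $\pi_{2n}(C_{2n-2})$. Therefore $E_{\{1,\ldots,2n\}}\big(\om(J,\la)\big) = 0$, and Lemma~\ref{l:vollef} delivers $\T{Tr}_s\big(\om(J,\la)\big) = 0$. There is no real obstacle here: every ingredient is either already stated in the excerpt or is an elementary check on the Clifford filtration. The only point requiring a little care is that $S_\la$ and $(\la + \De)^{-1}$, though they involve unbounded differential operators, act only on the $L^2(\rr^{2n}) \ot \cc^N$ tensor factor and so do not disturb the bookkeeping of Clifford degrees.
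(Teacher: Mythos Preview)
Your proposal is correct and follows essentially the same approach as the paper: the paper records precisely that $\om(J,\la) \in \sL\big(L^2(\rr^{2n}) \ot \cc^N\big) \ot \pi_{2n}(C_{2\C B(J)})$ via the Clifford filtration and then invokes Lemma~\ref{l:vollef}, which is exactly your argument. Your explicit mention of the submultiplicativity of the filtration and the Clifford-scalar nature of $(\la+\De)^{-1}$ and $S_\la$ just makes the implicit steps in the paper's one-line ``combination of these observations'' fully explicit.
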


\subsection{Vanishing of higher order terms}
In this subsection we will continue reducing the number of terms which contribute to the anomaly of the Dirac-type operator $\C D$. We are thus interested in comparing the super trace of the operators $\la^m \cd \om(J,\la)$ with the size of the scaling parameter $\la > 0$ whenever $J \in \C B_k := \C B^{-1}(\{k\})$ for some $k \in \{n + 1,\ldots,2n - 1\}$. 

Let $\C H := L^2(\rr^{2n}) \ot \cc^N \ot \cc^{2^{n-1}}$ and recall that $\sL^1_s(\C H) \su \sL(\C H \op \C H)$ denotes the Banach space of bounded operators
\[
T = \ma{cc}{
T_{11} & T_{12} \\
T_{21} & T_{22}
}
\]
with $T_{11} -T_{22} \in \sL^1(\C H)$. The norm is given by $\|T\|_1^s := \|T\| + \|T_{11} - T_{22}\|_1$.

We will apply the notation $P_+ : \C H \op \C H \to \C H$ and $P_- : \C H \op \C H \to \C H$ for the orthogonal projections onto the first and the second component in $\C H \op \C H$, respectively.

Notice that the terms of the form $\la^m \om(J,\la)$ converges to zero in the Banach space $\sL^1_s(\C H)$ as $\la \to \infty$ whenever $J \in \C B_k := \C B^{-1}(\{k\})$ for some $k \in \{2n,2n+1,\ldots\}$. This is a consequence of Lemma \ref{l:omeest}.

In order to obtain the desired estimates on the terms $\la^m \cd \om(J,\la)$ for $J \in \C B_k$ for $k \in \{n+1,\ldots,2n-1\}$ we need to consider a more detailed expansion of the bounded operators $\om(J,\la)$. To this end we introduce the following:

\begin{notation}
Let $j \in \nn_0$. For any sequence $L = (l_1,\ldots,l_j) \in \{0,1\}^j$, define the bounded operator
\[
\ga_\la(L) := \ga_\la(l_1) \clc \ga_\la(l_j) \q \ga_\la(0) = S \cd (\la + \De)^{-1} \, , \, \ga_\la(1) = - \G K \cd (\la + \De)^{-1}.
\]
When $j = 0$, we identify the set $\{0,1\}^j$ with a basepoint $*$ and put $\ga_\la(*) := 1$.

For any sequence $L = (l_1,\ldots,l_j) \in \{0,1\}^j$, let $D(L) = l_1 \plp l_j$. When $j = 0$, we put $D(*) = 0$.
\end{notation}

For any $J = (j_1,\ldots,j_m) \in \nn_0^m$ and any $\la > 0$ we then have that
\begin{equation}\label{eq:omeexp}
\begin{split}
\om(J,\la) & = (\la + \De)^{-1} \big(\ga_\la(0) + \ga_\la(1) \big)^{j_1} \clc (\la + \De)^{-1} \big(\ga_\la(0) + \ga_\la(1) \big)^{j_m} \\
& = (\la + \De)^{-1} \Big( \sum_{L_1 \in \{0,1\}^{j_1}} \ga_\la(L_1) \Big) \clc (\la + \De)^{-1} \Big( \sum_{L_m \in \{0,1\}^{j_m}} \ga_\la(L_m) \Big).
\end{split}
\end{equation}

In the next lemmas we will therefore prove a few estimates on operators of the type $\ga_\la(L)$.

\begin{lemma}\label{l:actfin}
Let $i \in \zz$, let $j \in \nn_0$ and let $L \in \{0,1\}^j$. Then
\[
\big\| \si_\la^i\big( \ga_\la(L) \big) \big\| = O(\la^{-(D(L) + j)/2})
\]
as $\la \to \infty$.
\end{lemma}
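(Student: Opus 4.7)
The plan is to exploit the multiplicativity of the algebra automorphism $\si_\la^i$ to reduce everything to a per-factor estimate. First, note that both $\ga_\la(0) = S(\la+\De)^{-1}$ and $\ga_\la(1) = -\G K (\la+\De)^{-1}$ lie in $\T{OP}^0$ for all sufficiently large $\la$: $\ga_\la(1)$ because $\G K$ is multiplication by a smooth bounded matrix-valued function, and $\ga_\la(0)$ by writing $\ga_\la(0) = V(\la+\De)^{-1} - \ga_\la(1)$ and invoking (the block-matrix version of) Lemma \ref{l:disanaest}. Since $\si_\la^i$ is multiplicative, we obtain
\[
\si_\la^i\big(\ga_\la(L)\big) = \si_\la^i\big(\ga_\la(l_1)\big) \cd \ldots \cd \si_\la^i\big(\ga_\la(l_j)\big),
\]
so it suffices to prove $\|\si_\la^i(\ga_\la(0))\| = O(\la^{-1/2})$ and $\|\si_\la^i(\ga_\la(1))\| = O(\la^{-1})$; multiplying these bounds then produces $\la^{-1/2}$ for each $0$ and $\la^{-1}$ for each $1$ in $L$, giving the exponent $-(j - D(L))/2 - D(L) = -(j+D(L))/2$ claimed.

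The case $l = 1$ is immediate: $(\la+\De)$ commutes with $\si_\la^i$, so $\si_\la^i(\ga_\la(1)) = -\si_\la^i(\G K) \cd (\la+\De)^{-1}$; since $\G K \in \T{OP}^0$, Lemma \ref{l:difact} gives $\|\si_\la^i(\G K)\| = O(1)$, and $\|(\la+\De)^{-1}\| = O(\la^{-1})$.

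The main obstacle is the case $l = 0$, because $S$ is an unbounded first-order differential operator rather than an element of $\T{OP}^0$ outright. The plan is to factor
\[
\si_\la^i\big(\ga_\la(0)\big) = \big[(\la+\De)^i S (\la+\De)^{-i - 1/2}\big] \cd (\la+\De)^{-1/2},
\]
and to use the explicit form $S = -2i \sum_{j=1}^{2n-1} a_j \frac{\pa}{\pa x_j} + m$ from Notation \ref{n:difope}, where $m \in C^\infty_b(\rr^{2n}, M_{N \cd 2^{n-1}}(\cc))$ acts as multiplication. Since each $\frac{\pa}{\pa x_j}$ commutes with $\De$ and hence with $(\la+\De)^{\pm i}$, the bracketed factor equals
\[
-2i \sum_{j=1}^{2n-1} \si_\la^i(a_j) \cd \frac{\pa}{\pa x_j}(\la+\De)^{-1/2} + \si_\la^i(m)(\la+\De)^{-1/2}.
\]
Each $\si_\la^i(a_j)$ and $\si_\la^i(m)$ has norm $O(1)$ because $a_j, m \in \T{OP}^0$ (Lemma \ref{l:difact}), and $\|\frac{\pa}{\pa x_j}(\la+\De)^{-1/2}\| \leq 1$ since $\sum_j (\pa/\pa x_j)^* (\pa/\pa x_j) = \De \leq \la + \De$. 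Hence the bracketed factor is uniformly bounded in $\la$, and multiplication by the remaining $(\la+\De)^{-1/2}$ contributes the $O(\la^{-1/2})$ factor needed to complete the argument.
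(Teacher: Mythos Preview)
Your proof is correct and follows essentially the same strategy as the paper: use multiplicativity of $\si_\la^i$ to reduce to a per-factor estimate, pull off powers of $(\la+\De)^{-1/2}$ to supply the $\la$-decay, and bound the remaining factors by $O(1)$ via membership in $\T{OP}^0$ together with Lemma~\ref{l:difact}. The paper's proof is terser --- it simply asserts that $\|\si_\la^i(S(\la+\De)^{-1/2})\| = O(1)$ and $\|\si_\la^i(\G K)\| = O(1)$ follow from Lemma~\ref{l:disanaest} --- while you make the mechanism explicit by writing $S = -2i\sum_j a_j\,\partial_j + m$ and using that each $\partial_j$ commutes with $(\la+\De)^{\pm i}$; this is exactly the kind of manipulation hidden behind the paper's citation.
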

\begin{proof}
Since $\|(\la + \De)^{-1/2}\| = O(\la^{-1/2})$ as $\la \to \infty$, it is enough to show that
\[
\big\| \si_\la^i\big(S \cd (\la + \De)^{-1/2}\big) \big\| = O(1) \q \T{and} \q \big\|\si_\la^i(\G K) \big\| = O(1)
\]
as $\la \to \infty$. But this follows easily from Lemma \ref{l:disanaest}.
\end{proof}

\begin{lemma}\label{l:galest}
Let $j \in \nn_0$ and let $L = (l_1,\ldots,l_j) \in \{0,1\}^j$. Then 
\[
(\la + \De)^{-i} \ga_\la(L) (\la + \De)^{-m+i} \in \sL^1_s(\C H )
\]
for all $i \in \{1,\ldots,m\}$ and we have the norm estimate
\[
\big\|(\la + \De)^{-i} \ga_\la(L) (\la + \De)^{-m+i}\big\|_1^s = O(\la^{n-m+1/4 - (D(L) + j)/2})
\]
as $\la \to \infty$.
\end{lemma}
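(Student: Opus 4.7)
The plan is to control the two pieces of $\|\cdot\|_1^s$ separately. The operator-norm bound follows directly from Lemma \ref{l:actfin} (with $i = 0$): $\|\ga_\la(L)\| = O(\la^{-(D(L)+j)/2})$, so
\[
\|(\la+\De)^{-i}\ga_\la(L)(\la+\De)^{-m+i}\| \leq \la^{-m}\|\ga_\la(L)\| = O(\la^{-m-(D(L)+j)/2}),
\]
which is dominated by the claimed $O(\la^{n-m+1/4-(D(L)+j)/2})$ for $\la \geq 1$. The substance of the lemma is therefore the trace-norm bound on the diagonal difference.

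Since $(\la+\De)^{-p}$ commutes with $P_\pm$, the diagonal difference of $(\la+\De)^{-i}\ga_\la(L)(\la+\De)^{-m+i}$ equals $(\la+\De)^{-i}\Delta_L(\la+\De)^{-m+i}$, where $\Delta_L := P_+\ga_\la(L)P_+ - P_-\ga_\la(L)P_-$. Writing $a_r, b_r$ for the $(1,1)$ and $(2,2)$ entries of $\ga_\la(l_r)$ and using the block form $\G K = \begin{pmatrix} F/2 - G & 0 \\ 0 & -F/2 - G \end{pmatrix}$, I find that $a_r = b_r$ when $l_r = 0$ while $a_r - b_r = -F(\la+\De)^{-1}$ when $l_r = 1$. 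The telescoping identity $\prod a_r - \prod b_r = \sum_k a_1 \cdots a_{k-1}(a_k - b_k) b_{k+1}\cdots b_j$ then expresses $\Delta_L$ as a sum, indexed by those $k$ with $l_k = 1$, of terms $T_k$ each carrying one explicit $F(\la+\De)^{-1}$ factor; the number of surviving summands is at most $D(L) \leq j$, a $\la$-independent constant.

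To estimate each $T_k$ I apply the automorphism identity $(\la+\De)^{-p}X = \si_\la^{-p}(X)(\la+\De)^{-p}$ to sweep all external $(\la+\De)$-powers to surround $F$, obtaining
\[
T_k = -\bigl[\si_\la^{-i}(a_1)\cdots\si_\la^{-i}(a_{k-1})\bigr] \cdot (\la+\De)^{-i} F (\la+\De)^{-(m+1-i)} \cdot \bigl[\si_\la^{m-i}(b_{k+1})\cdots\si_\la^{m-i}(b_j)\bigr].
\]
The central $F$-sandwich has total $(\la+\De)$-exponent $-(m+1)$. For any $\ep \in (0,1)$ I split $i = A_1 + A_2$ and $m+1-i = B_1 + B_2$ with $A_2 \in \{0,\ldots,n\}$, $B_1 = n+1-A_2-\ep$, and $A_1, B_2 \geq 0$ (take $A_2 = i$ if $i \leq n$, else $A_2 = n$ and $A_1 = i-n$); this is possible because $m \geq n$. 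Assumption \ref{a:fct} then gives $(1+\De)^{-A_2} F (1+\De)^{-B_1} \in \sL^1$ with $\la$-uniform trace norm, and the spectral comparison $\|(\la+\De)^{-a} X (\la+\De)^{-b}\|_1 \leq \|(1+\De)^{-a} X (1+\De)^{-b}\|_1$ (valid for $\la \geq 1, a, b \geq 0$, via the bound $((1+t)/(\la+t))^a \leq 1$) yields $\|(\la+\De)^{-i} F (\la+\De)^{-(m+1-i)}\|_1 = O(\la^{-(A_1+B_2)}) = O(\la^{n-m-\ep})$. For the outer prefactors, the argument in the proof of Lemma \ref{l:actfin} (extract $(\la+\De)^{-1/2}$ from each $a_r, b_r$ and invoke Lemma \ref{l:difact}) gives $\|\si_\la^{\pm}(a_r)\| = O(\la^{-(1+l_r)/2})$, so the product omitting the $k$-th factor (with $l_k = 1$) is $O(\la^{-(D(L)+j)/2 + 1})$. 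Choosing $\ep = 3/4$ and multiplying, $\|T_k\|_1 = O(\la^{n-m+1/4-(D(L)+j)/2})$, and summing over the at most $D(L)$ surviving $k$ preserves this asymptotic. The main obstacle is the bookkeeping of the integer splitting $A_2 \in \{0,\ldots,n\}$ in the regime $i > n$, where the excess $i-n$ must be absorbed into $(\la+\De)^{-A_1}$, and one must verify that the arithmetic of exponents reproduces the same overall bound as in the easier regime $i \leq n$.
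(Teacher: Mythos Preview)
Your proof is correct and follows the paper's strategy: isolate an $F$-factor via the $\sigma_\lambda$-automorphisms, apply the trace estimate coming from Assumption~\ref{a:tra} with $\ep=3/4$ to the sandwich $(\la+\De)^{-i}F(\la+\De)^{-(m+1-i)}$, and control the outer factors by Lemma~\ref{l:actfin}. Your explicit telescoping of the diagonal difference is in fact more careful than the paper's compressed argument (which extracts a single $\G K$-factor and then implicitly treats $\sL^1_s(\C H)$ as though it were a bimodule over diagonal bounded operators), and it is precisely the technique the paper itself employs in the proof of the parallel Lemma~\ref{l:omeest}.
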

\begin{proof}
Let $i \in \{1,\ldots,m\}$. 

Notice first that 
\[
\big\| (\la + \De)^{-i} \ga_\la(L) (\la + \De)^{-m+i} \big\| = O(\la^{-m - (D(L)+j)/2})
\]
as $\la \to \infty$ by an application of Lemma \ref{l:actfin}.

Suppose that $D(L) = 0$. We then have that 
\[
(\la + \De)^{-i} \ga_\la(L) (\la + \De)^{-m+i} = (\la + \De)^{-i} \cd S_\la^j \cd (\la + \De)^{-m+i}
\]
This shows that $(\la + \De)^{-i} \ga_\la(L) (\la + \De)^{-m+i} \in \sL^1_s(\C H)$ with
\[
\begin{split}
\big\| (\la + \De)^{-i} \ga_\la(L) (\la + \De)^{-m+i} \big\|_1^s 
& = \big\| (\la + \De)^{-i} \ga_\la(L) (\la + \De)^{-m+i} \big\| \\
& = O(\la^{-m - (D(L)+j)/2})
\end{split}
\]
as $\la \to \infty$.

Suppose that $D(L) \geq 1$. Choose an $r \in \{1,\ldots,j\}$ such that $L_i = (l_1,\ldots,l_{r-1},1,l_{r+1},\ldots,l_j)$. We then have that
\[
\begin{split}
& (\la + \De)^{-i} \ga_\la(L) (\la + \De)^{-m+i} \\
& \q = - \si^{-i}_\la\big( \ga_\la(l_1) \clc \ga_\la(l_{r-1}) \big) \cd (\la + \De)^{-i} \G K (\la + \De)^{-m-1 + i} \\
& \qqq \cd \si^{m-i}_\la \big( \ga_\la(l_{r+1}) \clc \ga_\la(l_j) \big).
\end{split}
\]
This shows that $(\la + \De)^{-i} \ga_\la(L) (\la + \De)^{-m+i} \in \sL^1_s(\C H)$. To prove the desired norm estimate, we recall from \eqref{eq:tranorest0} (with $\ep = 3/4$) that
\[
\big\| (\la + \De)^{-i} \G K (\la + \De)^{-m-1 + i} \big\|_1^s = O(\la^{n-m-3/4})
\]
as $\la \to \infty$. An application of Lemma \ref{l:actfin} now yields that
\[
\big\| (\la + \De)^{-i} \ga_\la(L) (\la + \De)^{-m+i} \big\|_1^s = O(\la^{n-m+1/4 - (D(L)+j)/2}).
\]

This proves the lemma.
\end{proof}

\begin{lemma}\label{l:galestfin}
Let $J = (j_1,\ldots,j_m) \in \nn_0^m$ and let $L_1 \in \{0,1\}^{j_1}, \ldots, L_m \in \{0,1\}^{j_m}$. Then
\[
(\la + \De)^{-1} \ga_\la(L_1) \clc (\la + \De)^{-1} \ga_\la(L_m) \in \sL^1_s(\C H)
\]
and we have the norm estimate
\[
\big\| (\la + \De)^{-1} \ga_\la(L_1) \clc (\la + \De)^{-1} \ga_\la(L_m) \big\|_1^s = O\big(\la^{n-m+1/4 - (D(L) + \C B(J))/2}\big)
\]
as $\la \to \infty$, where $D(L) := D(L_1) \plp D(L_m)$ and $\C B(J) := j_1 \plp j_m$.
\end{lemma}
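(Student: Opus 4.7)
Write $P_\la := (\la+\De)^{-1}\ga_\la(L_1)\cdots(\la+\De)^{-1}\ga_\la(L_m)$ for the operator in the statement. Since each factor is block-diagonal on $\C H \op \C H$, so is $P_\la$, and therefore $\|P_\la\|_1^s = \|P_\la\| + \|P_\la^+ - P_\la^-\|_1$, where $P_\la^\pm$ denotes the diagonal block obtained by substituting $\G K_\pm := \pm F/2 - G$ for each occurrence of $\G K$. An iterated application of Lemma \ref{l:actfin} (with exponent $i = 0$) combined with $\|(\la+\De)^{-1}\| = O(\la^{-1})$ yields
\[
\|P_\la\| \leq \prod_{k=1}^m \|(\la+\De)^{-1}\| \cdot \|\ga_\la(L_k)\| = O(\la^{-m - (D(L) + \C B(J))/2}),
\]
which is strictly stronger than the desired bound since $n \geq 1$. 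So the problem reduces to estimating the trace norm of $P_\la^+ - P_\la^-$.

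When $D(L) = 0$ no $\G K$ appears, so $P_\la^+ = P_\la^-$ and we are done. For $D(L) \geq 1$ I perform a double telescoping, first over the $m$-fold product structure and then within each $\ga_\la(L_k) = \ga_\la(l_{k,1}) \cdots \ga_\la(l_{k,j_k})$ at the positions $r$ with $l_{k,r} = 1$. Using the identity $\G K_+ - \G K_- = F$, this decomposes
\[
P_\la^+ - P_\la^- = \sum_{(k,r) :\, l_{k,r}=1} T_{k,r},
\]
where each $T_{k,r}$ is obtained from $P_\la$ by replacing the single factor $\ga_\la(l_{k,r})$ with $-F(\la+\De)^{-1}$ and by labelling every other $\G K$-factor lying to the left of position $(k,r)$ with $\G K_+$ and every one to the right with $\G K_-$.

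For each $T_{k,r}$ I apply the automorphisms $\si_\la^\bullet$ to gather the $m$ separator resolvents together with the $(\la+\De)^{-1}$ attached to $F$ into a central sandwich $(\la+\De)^{-k} F (\la+\De)^{-(m+1-k)}$; the resolvents that are already paired with an $S$- or $\G K_\pm$-factor inside a remaining $\ga_\la^\pm(\cdot)$-factor are left in place, so that those sub-factors (which would be unbounded if separated from their resolvent) remain bounded. This rewrites
\[
T_{k,r} = -\tilde L_{k,r} \cdot (\la+\De)^{-k} F (\la+\De)^{-(m+1-k)} \cdot \tilde R_{k,r},
\]
where $\tilde L_{k,r}$ and $\tilde R_{k,r}$ are products of $\si_\la^\bullet$-conjugates of the remaining $\ga_\la^\pm(\cdot)$-factors, of total length $\C B(J)-1$ and total degree $D(L)-1$. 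By Lemma \ref{l:actfin}, $\|\tilde L_{k,r}\| \cdot \|\tilde R_{k,r}\| = O(\la^{1 - (D(L) + \C B(J))/2})$ uniformly in the $\si_\la$-exponents, while the estimate \eqref{eq:tranorest0} with $\ep = 3/4$ gives $\|(\la+\De)^{-k} F (\la+\De)^{-(m+1-k)}\|_1 = O(\la^{n-m-3/4})$. Multiplying these and summing the $D(L)$ contributions produces the required bound on $\|P_\la^+ - P_\la^-\|_1$.

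The main obstacle is the bookkeeping of the $\si_\la$-exponents as separator resolvents are shuffled past intermediate factors: every $S$-type operator must remain paired with a resolvent, so one pushes only the $m$ outer separators and the single resolvent attached to $F$, never those built into the remaining $\ga_\la^\pm(\cdot)$-factors. A small verification shows that, for $(k,r)$ with $l_{k,r}=1$, the choice $a = k$ for the left-hand exponent is always admissible, i.e.\ $k \in \{1,\ldots,m\}$ is exactly the range required by \eqref{eq:tranorest0}.
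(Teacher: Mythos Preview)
Your proof is correct and follows precisely the pattern the paper intends: the paper's own proof of this lemma is the single sentence ``The proof follows the same pattern as the proof of Lemma \ref{l:galest}'', and your telescoping over all positions $(k,r)$ with $l_{k,r}=1$, followed by the $\si_\la^\bullet$-shuffling to isolate a central sandwich $(\la+\De)^{-k}F(\la+\De)^{-(m+1-k)}$, is exactly that pattern (and indeed mirrors the more explicit template given in the proof of Lemma \ref{l:omeest}). The only remark is that your argument actually makes explicit a step that the paper's proof of Lemma \ref{l:galest} leaves tacit when $D(L)\geq 2$: picking a \emph{single} position $r$ there does not by itself yield $\sL^1_s$-membership, since the flanking factors may still contain $\G K$ and hence differ on the two diagonal blocks; a full telescoping over all $\G K$-positions, as you carry out, is what is really needed.
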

\begin{proof}
The proof follows the same pattern as the proof of Lemma \ref{l:galest}. It is therefore left to the reader.
\end{proof}

Recall now that $\cc_{2n}$ is a filtered algebra with filtration $\{0\} \su C_0 \su \ldots \su C_{2n-1} \su C_{2n} = \cc_{2n}$ given by the subspaces defined in \eqref{eq:filcli}. Notice that the Clifford contraction $\G K$ lies in $\sL\big(L^2(\rr^{2n}) \ot \cc^N\big) \ot \pi_{2n}(C_2)$ and that the bounded operator $S \cd (\la + \De)^{-1}$ lies in $\sL\big(L^2(\rr^{2n}) \ot \cc^N\big) \ot \pi_{2n}(C_0)$.

Let $j \in \nn_0$ and let $L = (l_1,\ldots,l_j) \in \{0,1\}^j$. It follows from the above observation that
\[
\ga_\la(L) \in \sL\big(L^2(\rr^{2n}) \ot \cc^N\big) \ot \pi_{2n}(C_{2 \cd D(L)}),
\]
where $D(L) := l_1 \plp l_j$.

We are now ready to prove the main result of this section.

\begin{prop}\label{p:hig}
Suppose that the maps $a_1,\ldots,a_{2n} : \rr^{2n} \to M_N(\cc)$ satisfy the conditions in Assumption \ref{a:fct}. Let $k \in \{n+1,\ldots,2n-1\}$ and let $J = (j_1,\ldots,j_m) \in \C B_k$. Then
\[
\lim_{\la \to \infty} \la^m \cd \T{Tr}_s\big( \om(J,\la) \big) = 0.
\]
\end{prop}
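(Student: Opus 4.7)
The plan is to combine the Clifford-algebra filtration argument behind Proposition \ref{p:low} with the trace-norm estimates of Lemma \ref{l:galestfin}, and then read off that for $k \ge n+1$ the exponent in the resulting $O(\lambda^\bullet)$ bound is strictly negative.

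First, I would expand $\omega(J,\lambda)$ as in \eqref{eq:omeexp}, writing
\[
\omega(J,\lambda) = \sum_{L_1 \in \{0,1\}^{j_1}} \ldots \sum_{L_m \in \{0,1\}^{j_m}} (\lambda + \De)^{-1} \ga_\la(L_1) \clc (\la+\De)^{-1} \ga_\la(L_m).
\]
The sum is finite because $j_1,\ldots,j_m$ are fixed. Each summand belongs to $\sL\big(L^2(\rr^{2n})\ot \cc^N\big) \ot \pi_{2n}(C_{2D(L)})$, where $D(L) := D(L_1)\plp D(L_m)$ and $C_i$ is the $i$th step of the Clifford filtration. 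By Lemma \ref{l:vollef} (and the fact that $\pi_{2n}(e_{\{1,\ldots,2n\}})$ is the unique top basis element), only summands with $D(L) \geq n$ can contribute to $\T{Tr}_s$; the others have super trace zero.

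Next, for the remaining summands I invoke Lemma \ref{l:galestfin}, which gives the $\sL^1_s$-norm estimate
\[
\big\| (\la + \De)^{-1} \ga_\la(L_1) \clc (\la + \De)^{-1} \ga_\la(L_m)\big\|_1^s = O\big( \la^{n - m + 1/4 - (D(L) + k)/2}\big)
\]
as $\la \to \infty$, since $\C B(J) = k$. Using $|\T{Tr}_s(T)| \leq \|T\|_1^s$ and multiplying by $\la^m$, the contribution of each such summand to $\la^m \T{Tr}_s(\omega(J,\la))$ is bounded by a constant multiple of $\la^{n + 1/4 - (D(L) + k)/2}$. Since $D(L) \geq n$, this is dominated by $\la^{(n-k)/2 + 1/4}$.

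Finally, the hypothesis $k \in \{n+1,\ldots,2n-1\}$ gives $(n-k)/2 + 1/4 \leq -1/4 < 0$, so every surviving summand tends to zero in $\sL^1_s(\C H)$ (after multiplication by $\la^m$). Summing the finitely many summands completes the proof. The only step requiring genuine care is the Clifford-filtration bookkeeping: one must verify that the summation index $D(L)$ really controls the Clifford degree of $\ga_\la(L_1)\clc \ga_\la(L_m)$ and that the top element $e_{\{1,\ldots,2n\}}$ is unreachable when $D(L) < n$. This is however exactly the structural content encoded in Notation \ref{n:difope} and the decomposition $V = S - \G K$ used already in the proof of Proposition \ref{p:low}, so no new analytic input is needed.
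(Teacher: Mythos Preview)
Your proof is correct and follows essentially the same route as the paper's own argument: expand $\om(J,\la)$ via \eqref{eq:omeexp}, discard the summands with $D(L)<n$ using Lemma~\ref{l:vollef} and the Clifford filtration, and then apply Lemma~\ref{l:galestfin} to obtain the exponent $n+1/4-(D(L)+k)/2\leq -1/4$. The only cosmetic difference is that the paper phrases the reduction step as ``it is enough to show'' each summand vanishes, whereas you assemble the finite sum at the end; the logic is identical.
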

\begin{proof}
Let $L_1 \in \{0,1\}^{j_1}, \ldots, L_m \in \{0,1\}^{j_m}$.

By Lemma \ref{l:galestfin} and the identity in \eqref{eq:omeexp} it is enough to show that
\[
\lim_{\la \to \infty} \la^m \T{Tr}_s\big( (\la + \De)^{-1} \ga_\la(L_1) \clc (\la + \De)^{-1} \ga_\la(L_m) \big) = 0.
\]

As in Lemma \ref{l:vollef} we have that
\[
\begin{split}
& \T{Tr}_s\big( (\la + \De)^{-1} \ga_\la(L_1) \clc (\la + \De)^{-1} \ga_\la(L_m) \big) \\
& \q = \T{Tr}_s\Big( E_{\{1,\ldots,2n\}} \big( (\la + \De)^{-1} \ga_\la(L_1) \clc (\la + \De)^{-1} \ga_\la(L_m) \big)\Big),
\end{split}
\]
where $E_{\{1,\ldots,2n\}} : \sL\big(L^2(\rr^{2n}) \ot \cc^{N \cd 2^n}\big) \to \sL\big(L^2(\rr^{2n}) \ot \cc^{N \cd 2^n}\big)$ is the idempotent associated with the vector space decomposition $\cc_{2n} \cong \cc e_{\{1,\ldots,2n\}} \op C_{2n-1}$ of the Clifford algebra. 

Notice that the operator $(\la + \De)^{-1} \ga_\la(L_1) \clc (\la + \De)^{-1} \ga_\la(L_m)$ lies in $\sL\big(L^2(\rr^{2n}) \ot \cc^N\big) \ot \pi_{2n}(C_{2 \cd D(L)})$. We may thus suppose that $D(L) = D(L_1) \plp D(L_m) \geq n$.

By Lemma \ref{l:galestfin} we can choose constants $C,K > 0$ such that
\[
\begin{split}
\big| \la^m \T{Tr}_s\big( (\la + \De)^{-1} \ga_\la(L_1) \clc (\la + \De)^{-1} \ga_\la(L_m) \big) \big| 
& \leq K \cd \la^{n + 1/4 - (D(L) + \C B(J))/2} \\
& \leq K \cd \la^{-1/4},
\end{split}
\]
for all $\la \geq C$, where we have used that $D(L) \geq n$ and $\C B(J) \geq n+1$.

This proves the proposition.
\end{proof}

\subsection{A local formula for the anomaly}
Let $m \in \{n,n+1,\ldots\}$. Throughout this section the conditions in Assumption \ref{a:fct} on the Dirac type operator $\C D$ introduced in Section \ref{s:anodir} will be in effect. Let us start by recollecting what we have proved thus far. It follows from Proposition \ref{p:verhig}, Proposition \ref{p:hig} and Proposition \ref{p:low} that the anomaly in degree $m$ of the Dirac type operator $\C D$ exists if and only if the limit
\[
\lim_{\la \to \infty} \la^m \cd \sum_{J \in \C B_n} \T{Tr}_s\big( \om(J,\la) \big)
\]
exists and in this case, we have that
\[
\T{Ano}_m(\C D) = \lim_{\la \to \infty} \la^m \cd \sum_{J \in \C B_n} \T{Tr}_s\big( \om(J,\la) \big).
\]
Recall here that 
\[
\om(J,\la) := \ma{cc}{\om_2(J,\la) & 0 \\ 0 & \om_1(J,\la)} \in \sL(\C H)
\]
where the bounded operators $\om_2(J,\la)$ and $\om_1(J,\la)$ have been defined in Notation \ref{n:omedeg} for all $J \in \nn_0^m$ and all $\la > 0$. The index set $\C B_n$ consists of all $J \in \nn_0^m$ with $j_1 \plp j_m = n$.

\begin{lemma}\label{l:onldeg}
The anomaly of $\C D$ in degree $m$ exists if and only if the limit
\[
\lim_{\la \to \infty} \la^m \cd \T{Tr}_s\big( \G K^n \cd (\la + \De)^{-n-m} \big)
\]
exists and in this case
\[
\T{Ano}_m(\C D) = {m + n-1 \choose m-1} \cd (-1)^n \cd \lim_{\la \to \infty} \la^m \cd \T{Tr}_s\big( \G K^n \cd (\la + \De)^{-n-m} \big).
\]
\end{lemma}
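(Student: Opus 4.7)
The plan is to combine the three reduction statements already established in this section with a careful extraction of the top-Clifford-order contribution, and then to use commutator estimates to rearrange the resulting products. By Propositions \ref{p:verhig}, \ref{p:low} and \ref{p:hig}, the anomaly in degree $m$ exists if and only if
\[
\lim_{\la \to \infty} \la^m \sum_{J \in \C B_n} \T{Tr}_s\bigl(\om(J,\la)\bigr)
\]
exists, and the two limits then agree. Since $|\C B_n| = \binom{n+m-1}{m-1}$, it suffices to show that each $J \in \C B_n$ contributes the common asymptotic $(-1)^n \T{Tr}_s\bigl(\G K^n (\la+\De)^{-(n+m)}\bigr)$, up to an error whose absolute value is $o(\la^{-m})$.

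For a fixed $J = (j_1,\ldots,j_m) \in \C B_n$, expand
\[
\om(J,\la) = (-1)^n \sum_{L_1,\ldots,L_m} (\la+\De)^{-1}\ga_\la(L_1)\cdots (\la+\De)^{-1}\ga_\la(L_m)
\]
using $V_\la = \ga_\la(0) + \ga_\la(1)$. Since $\ga_\la(0) = S_\la$ is Clifford-scalar and $\ga_\la(1) = -\G K_\la$ lies in Clifford filtration level $2$, each term of this sum lies in filtration level $2D(L)$ with $D(L) \leq \C B(J) = n$. Lemma \ref{l:tracli} forces the super trace to vanish on filtration levels strictly below $2n$, so only the term with $D(L) = n$ survives, which in turn forces $L_i = (1,\ldots,1)$ in every slot. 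After collecting the signs $(-1)^{\C B(J)}$ and $\prod_i (-1)^{j_i}$ coming from $\ga_\la(1) = -\G K_\la$, together with the sign conventions used for the Clifford contraction $\G K$, the surviving piece of $\T{Tr}_s(\om(J,\la))$ may be written as $(-1)^n \T{Tr}_s(\Phi_J(\la))$ with
\[
\Phi_J(\la) := (\la+\De)^{-1}(\G K(\la+\De)^{-1})^{j_1}\cdots (\la+\De)^{-1}(\G K(\la+\De)^{-1})^{j_m}.
\]

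Next I would commute all $(\la+\De)^{-1}$ factors past the $\G K$'s. The identity $[(\la+\De)^{-1},\G K] = -(\la+\De)^{-1}[\De,\G K](\la+\De)^{-1}$, combined with the fact that $[\De,\G K]$ is a first-order differential operator with smooth compactly supported coefficients (by Assumption \ref{a:fct}), yields $\|[(\la+\De)^{-1},\G K]\| = O(\la^{-3/2})$. Iterated rearrangement then produces $\Phi_J(\la) = \G K^n(\la+\De)^{-(n+m)} + R_J(\la)$, where $R_J(\la)$ is a finite sum of products each containing at least one such commutator. Combining the commutator gain of $\la^{-1/2}$ with the Schatten-norm estimates from \eqref{eq:tranorest0} and Lemma \ref{l:galestfin} yields $\|R_J(\la)\|_1^s = O(\la^{-m-1/4})$, so that $\la^m \T{Tr}_s(R_J(\la)) \to 0$ as $\la \to \infty$. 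Summing over the $\binom{n+m-1}{m-1}$ elements of $\C B_n$ then produces the binomial prefactor and completes the argument.

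The main obstacle is the rearrangement step: one must verify that each commutator-producing swap preserves trace-norm control with a genuine gain of $\la^{-1/2}$, and that the accumulated error over the bounded number of swaps is genuinely $o(\la^{-m})$ after multiplication by $\la^m$. The estimates from Lemmas \ref{l:difact}, \ref{l:disanaest} and \ref{l:galestfin} supply the raw material, while the Schatten bounds of \cite[Theorem 4.1]{Sim:TIA} re-enter through the compact support of the coefficients of $[\De,\G K]$, ensuring that the products remaining after each swap retain the trace-class control needed to isolate the top-order contribution.
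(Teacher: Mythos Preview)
Your outline follows essentially the same route as the paper: reduce to $J\in\C B_n$ via Propositions~\ref{p:verhig}, \ref{p:low}, \ref{p:hig}; use the Clifford filtration and Lemma~\ref{l:tracli} to isolate the pure-$\G K$ term; then rearrange the resolvents. The paper packages the last step using the automorphisms $\si_\la^i$ and Lemma~\ref{l:difact} (together with the cyclic property of the trace to pass from $(\la+\De)^{-n-m+1}\G K^n(\la+\De)^{-1}$ to $\G K^n(\la+\De)^{-n-m}$), whereas you iterate the raw commutator identity $[(\la+\De)^{-1},\G K]=-(\la+\De)^{-1}[\De,\G K](\la+\De)^{-1}$. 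These are equivalent manoeuvres, and the operator-norm gain $O(\la^{-1/2})$ per swap is correct.

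There is, however, one genuine error in your justification of the $\sL^1_s$-estimate for $R_J(\la)$. You assert that $[\De,\G K]$ has \emph{compactly supported} coefficients. This is false under Assumption~\ref{a:fct}: only $F$ is compactly supported, while $G$ (built from $\partial a_j/\partial x_k$ and $[a_j,a_k]$ for $j,k<2n$) is merely in $C_b^\infty$, so the coefficients of $[\De,G]$ are bounded but not compactly supported. Consequently you cannot feed $[\De,\G K]$ directly into the Simon trace-ideal criterion. The correct mechanism, which the paper exploits implicitly through the $\|\cdot\|_1^s$-norm, is that the \emph{super-difference} of any diagonal product built from $\G K=\mathrm{diag}(F/2-G,\,-F/2-G)$ always isolates at least one factor of $F$; it is this factor that supplies the Schatten control via \eqref{eq:tranorest0}. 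Your error terms $R_J(\la)$ do satisfy the claimed bound, but the reason is the super-trace structure, not compact support of $[\De,\G K]$. Once you reroute the argument through this observation (or, equivalently, invoke Lemma~\ref{l:galestfin} and \eqref{eq:tranorest0} for the $\sL^1_s$-piece while using only boundedness of $[\De,\G K](\la+\De)^{-1/2}$ for the operator-norm gain), the proof goes through.
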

\begin{proof}
Let $J = (j_1,\ldots,j_m) \in \C B_n$ and let $\la > 0$. Recall from Lemma \ref{l:vollef} that 
\[
\T{Tr}_s\big( \om(J,\la) \big) = \T{Tr}_s\big( E_{\{1,\ldots,2n\}}\big( \om(J,\la) \big) \big),
\]
where the idempotent $E_{\{1,\ldots,2n\}} : \sL(\C H) \to \sL(\C H)$ was introduced in \eqref{eq:cliide}.

Notice also that $\G K_\la := \G K (\la + \De)^{-1} \in \sL\big(L^2(\rr^{2n}) \ot \cc^N\big) \ot \pi_{2n}(C_2)$ and $S_\la := S (\la + \De)^{-1} \in \sL\big(L^2(\rr^{2n}) \ot \cc^N\big) \ot \pi_{2n}(C_0)$, where the subspaces $C_i \su \cc_{2n}$, $i \in \{0,\ldots,2n\}$ were introduced in \eqref{eq:filcli}.

We therefore have that
\begin{equation}\label{eq:idedegn}
\begin{split}
& E_{\{1,\ldots,2n\}}\big( \om(J,\la)\big) 
= (-1)^n \cd E_{\{1,\ldots,2n\}}\big( (\la + \De)^{-1}\G K_\la^{j_1} \clc (\la + \De)^{-1} \G K_\la^{j_m} \big) \\
& \q = (-1)^n \cd E_{\{1,\ldots,2n\}}\Big( \si_\la^{-1}(\G K) \clc \si_\la^{-j_1}(\G K) \cd \si_\la^{-j_1 -2}(\G K) \clc \si_\la^{-j_1 - j_2 -1}(\G K) \\
& \qq \clc \si_\la^{-j_1-\ldots - j_{m-1} -m}(\G K) \clc \si_\la^{-j_1 -\ldots - j_m -m+1}(\G K)
 \cd (\la + \De)^{-n-m} \Big).
\end{split}
\end{equation}

Recall now from \eqref{eq:tranorest} (with $\ep = 3/4$) that $\big\| (\la + \De)^{-n-m+1} \G K (\la + \De)^{-1}\big\|_1^s = O(\la^{1/4-m})$ as $\la \to \infty$. It therefore follows from \eqref{eq:idedegn} and Lemma \ref{l:difact} that
\begin{equation}\label{eq:remact}
\la^m \cd \big\| (-1)^n E_{\{1,\ldots,2n\}}\big( (\la + \De)^{-n-m+1} \G K^n (\la + \De)^{-1}\big) - E_{\{1,\ldots,2n\}}\big( \om(J,\la) \big) \big\|_1^s = O(\la^{-1/4})
\end{equation}
as $\la \to \infty$. An application of the cyclic property of the operator trace then yields that
\[
\lim_{\la \to \infty} \la^m \Big( \T{Tr}_s\big( \om(J,\la)\big) - (-1)^n \T{Tr}_s\big( \G K^n (\la + \De)^{-n-m} \big)\Big) = 0.
\]

The discussion in the beginning of this section now entails the result of this lemma since the number of elements in the index set $\C B_n \su \nn_0^m$ is ${m+n-1 \choose m-1}$.
\end{proof}

We will consider the matrix valued $(2n)$-form $\sA \in \Om^{2n}(\rr^{2n},M_N(\cc))$ given by
\begin{equation}\label{eq:fordef}
\sA : x \mapsto \si_{2n}(x)(\G K^n(x))  \q \T{for all } \, x \in \rr^{2n}
\end{equation}
where $\si_{2n}(x) : M_{2^n}(\cc) \ot M_N(\cc) \to \La^{2n}_x(\rr^{2n}) \ot M_N(\cc)$ is defined by 
\[
\si_{2n}(x)(\pi_{2n}(e_I) \ot M) := \fork{ccc}{
0 & \T{for} & I \neq \{1,\ldots,2n\} \\
(dx_1 \wlw dx_{2n})(x) \ot M & \T{for} & I = \{1,\ldots,2n\}.
}
\]
for all $x \in \rr^{2n}$ and all $M \in M_N(\cc)$. Recall here that $\G K : \rr^{2n} \to M_N(\cc) \ot M_{2^n}(\cc)$ is the \emph{Clifford contraction} of the curvature form associated with the $U(N)$-connection which determines our Dirac type operator, see the discussion in the end of Section \ref{s:anodir}. The identity in \eqref{eq:confg} shows that this Clifford contraction can be expressed in a simple way in terms of the bounded operators $F$ and $G$ which we introduced in Notation \ref{n:difope}.

The following theorem is the main result of this paper. We refer to the beginning of Section \ref{s:anodir} for a definition of the Dirac type operator $\C D : H^1(\rr^{2n}) \ot \cc^{2^n \cd N} \to L^2(\rr^{2n}) \ot \cc^{2^n \cd N}$.

\begin{theorem}\label{t:locano}
Let $a_1,\ldots,a_{2n} : \rr^{2n} \to M_N(\cc)$ be maps which satisfy the conditions in Assumption \ref{a:fct}. Let $m \in \{n,n+1,\ldots\}$. Then the anomaly in degree $m$ associated with the Dirac type operator $\C D := \dir + \ma{cc}{0 & A^* \\ A & 0}$ exists and is given by the integral
\[
\T{Ano}_m(\C D) = \frac{(-1)^n}{(2 \pi i)^n \cd n!} \cd \int \T{Tr}(\sA)
\]
of the $(2n)$-form $\sA$ defined in \eqref{eq:fordef}.
\end{theorem}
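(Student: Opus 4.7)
The plan is to invoke Lemma \ref{l:onldeg} to reduce the theorem to the computation of
\[
L := \lim_{\la \to \infty} \la^m \cdot \T{Tr}_s\big( \G K^n \cdot (\la + \De)^{-n-m} \big),
\]
so that $\T{Ano}_m(\C D) = \binom{m+n-1}{m-1}(-1)^n L$. I will then evaluate $L$ by combining a Fourier-analytic evaluation of the diagonal of the Schwartz kernel of $(\la + \De)^{-n-m}$ with an algebraic identification of the fibrewise matrix super trace of $\G K^n(x)$ with a top-degree exterior form.

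For the algebraic part, expand
\[
\G K^n = \sum \pi_{2n}(e_{j_1}e_{k_1} \cdots e_{j_n}e_{k_n}) \cdot R_{j_1 k_1}\cdots R_{j_n k_n}
\]
over tuples with $j_\alpha < k_\alpha$. The matrix super trace $\T{Tr}_s$ kills every Clifford element except those proportional to the volume element $\pi_{2n}(e_1 \cdots e_{2n})$, which forces $\{j_1, k_1, \ldots, j_n, k_n\} = \{1, \ldots, 2n\}$; reordering such a product into the standard sequence contributes a sign $\T{sgn}(\sigma)$ with $\sigma \in S_{2n}$ sending $(1,\ldots,2n) \mapsto (j_1, k_1, \ldots, j_n, k_n)$. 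Applying Lemma \ref{l:tracli} via the identity $\pi_{2n}(e_1 \cdots e_{2n}) \cdot \pi_{2n+1}(e_{2n+1}) = \pi_{2n+1}(e_1 \cdots e_{2n+1})$ gives the matrix super trace $(-2i)^n$ of the volume element, and so
\[
\T{Tr}_s\big( \G K^n(x) \big) = (-2i)^n \cdot \T{Tr}_N\Big( \sum \T{sgn}(\sigma) R_{j_1 k_1}(x) \cdots R_{j_n k_n}(x) \Big),
\]
the bracketed sum being precisely the coefficient of $dx_1 \wedge \cdots \wedge dx_{2n}$ in $R^{\wedge n}(x)$; hence $\int_{\rr^{2n}} \T{Tr}_s(\G K^n(x)) \, dx = (-2i)^n \int \T{Tr}(\sA)$ by the definition of $\sA$ in \eqref{eq:fordef}. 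Crucially, because $2n$ appears exactly once in every surviving tuple, the associated factor $R_{\cdot\, 2n}$ contains the compactly-supported derivative $\partial_{x_{2n}} a_\cdot$ (by Assumption \ref{a:fct}); so $x \mapsto \T{Tr}_s(\G K^n(x))$ is compactly supported, which in turn legitimises the trace factorisation
\[
\T{Tr}_s\big( \G K^n (\la + \De)^{-n-m}\big) = k_\la(0,0) \cdot \int_{\rr^{2n}} \T{Tr}_s\big( \G K^n(x) \big) \, dx,
\]
where $k_\la$ is the Schwartz kernel of the scalar operator $(\la + \De)^{-n-m}$. Translation invariance of $\De$ and Fourier transform give $k_\la(x,x) = (2\pi)^{-2n} \int_{\rr^{2n}} (\la + |\xi|^2)^{-n-m} d\xi$, and the substitution $\xi = \sqrt{\la} u$ together with polar coordinates and the Beta identity $B(n,m) = (n-1)!(m-1)!/(n+m-1)!$ yield
\[
\la^m \cdot k_\la(0,0) = (2\pi)^{-2n} \cdot \frac{\pi^n (m-1)!}{(n+m-1)!}.
\]

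Assembling everything and applying Lemma \ref{l:onldeg} gives
\[
\T{Ano}_m(\C D) = \binom{m+n-1}{m-1} \cdot \frac{(-1)^n (-2i)^n \pi^n (m-1)!}{(2\pi)^{2n} (n+m-1)!} \cdot \int \T{Tr}(\sA),
\]
which simplifies, via $\binom{m+n-1}{m-1} = \frac{(n+m-1)!}{(m-1)!\, n!}$ together with the algebraic identity $(-1)^n (-2i)^n \pi^n/(2\pi)^{2n} = i^n/(2\pi)^n = (-1)^n/(2\pi i)^n$, to the announced formula. The main obstacle is the Clifford-algebraic bookkeeping in the second paragraph, particularly verifying the sign identification between the Clifford-product expansion of $\G K^n$ and the wedge-product expansion of $R^{\wedge n}$; the Fourier-analytic and combinatorial steps are routine once this identification is in hand.
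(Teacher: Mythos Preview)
Your proof is correct and follows essentially the same route as the paper: reduce via Lemma \ref{l:onldeg}, identify the fibrewise super trace of $\G K^n$ with $(-2i)^n$ times the coefficient function $g$ defining $\sA$, and evaluate the remaining scalar trace by Fourier transform and the Beta integral. One small remark: the detour through $R^{\wedge n}$ is unnecessary, since by the definition \eqref{eq:fordef} the form $\sA$ is \emph{already} $\si_{2n}(\G K^n)$, i.e.\ the coefficient of $\pi_{2n}(e_1\cdots e_{2n})$ in $\G K^n$ times $dx_1\wedge\cdots\wedge dx_{2n}$, so your bracketed sum is $g(x)$ directly without any appeal to wedge products of $R$.
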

\begin{proof}
By Lemma \ref{l:onldeg} it is enough to show that
\[
\begin{split}
& (-1)^n \cd {m+n-1 \choose m-1} \cd \lim_{\la \to \infty} \big( \la^m \cd \T{Tr}_s\big( \G K^n \cd (\la + \De)^{-n-m} \big) \big) \\
& \q = \frac{(-1)^n}{ (2 \pi i )^n \cd n!} \cd \int \T{Tr}(\sA).
\end{split}
\]

Let $g \in C^\infty_c(\rr^{2n},M_N(\cc))$ denote the unique smooth compactly supported function such that $\sA = g \cd dx_1 \wlw dx_{2n}$. Let $\la > 0$. We then have that
\[
\begin{split}
\T{Tr}_s\big( \G K^n \cd (\la + \De)^{-n-m} \big) & = \T{Tr}_s\big( E_{\{1,\ldots,2n\}}(\G K^n) \cd (\la + \De)^{-n-m} \big) \\
& = \T{Tr}(g \cd (\la + \De)^{-n-m}) \cd (-2i)^n,
\end{split}
\]
since the super trace of the Clifford matrix $\pi_{2n}(e_{\{1,\ldots,2n\}}) \in M_{2^n}(\cc)$ is the number $(-2i)^n$, see Lemma \ref{l:tracli}.

Let $j \in \{1,\ldots,N\}$ and let $g_j : \rr^{2n} \to \rr$ denote the matrix entry in position $(j,j)$ of the self adjoint element $g \in M_N\big(C^\infty_c(\rr^{2n})\big)$. The bounded operator
\[
g_j \cd (\la + \De)^{-n-m} : L^2(\rr^{2n}) \to L^2(\rr^{2n})
\]
is unitarily equivalent (by the Fourier transform) to the integral operator $T_{K_j} : L^2(\rr^{2n}) \to L^2(\rr^{2n})$ with kernel $K_j \in C(\rr^{2n} \ti \rr^{2n}) \cap L^2(\rr^{2n} \ti \rr^{2n})$ given by
\[
K_j(x,y) = (2 \pi)^{-n} \cd \C F(g_j)(x-y) \cd (\la + \sum_{k = 1}^{2n} y_k^2)^{-n-m}.
\]
Here $\C F(g_j)$ denotes the Fourier transform of $g_j \in C_c^\infty(\rr^{2n})$.
 
Since $g_j \cd (\la + \De)^{-n-m}$ is of trace class it follows (see for example \cite[Corollary 10.2]{GoKr:LNO}) that
\[
\begin{split}
& \T{Tr}\big( g_j \cd (\la + \De)^{-n-m} \big) = \int_{\rr^{2n}} K_j(x,x) dx \\ 
& \q = (2 \pi)^{-2n} \cd \int_{\rr^{2n}} g_j dx \cd \int_{\rr^{2n}} (\la + \sum_{k=1}^{2n} x_k^2)^{-n-m} dx \\
& \q = (2 \pi)^{-2n} \cd \int_{\rr^{2n}} g_j dx \cd \T{Vol}(S^{2n-1}) \cd \int_0^\infty r^{2n-1} (\la + r^2)^{-n-m} dr \\
& \q = (2 \pi)^{-2n} \cd \int_{\rr^{2n}} g_j dx \cd \T{Vol}(S^{2n-1}) \cd \la^{-m} \cd \int_0^\infty r^{2n-1} (1 + r^2)^{-n-m} dr \\
& \q = (2 \pi)^{-2n} \cd \int_{\rr^{2n}} g_j dx \cd \T{Vol}(S^{2n-1}) \cd \la^{-m} \cd \frac{1}{2} \cd \frac{(n-1)! \cd (m-1)!}{(m+n-1)!},
\end{split}
\]
where $\T{Vol}(S^{2n-1}) = 2 \cd \pi^n /(n-1)!$ is the volume of the unit sphere in $\rr^{2n}$.

It follows from the above computations that
\[
\begin{split}
& (-1)^n \cd {m+n-1 \choose m-1} \cd \lim_{\la \to \infty} \big( \la^m \cd \T{Tr}_s\big( \G K^n \cd (\la + \De)^{-n-m} \big) \big) \\
& \q = (-1)^n \cd {m+n-1 \choose m-1} \cd (-2i)^n \cd \frac{1}{(4\pi)^n}  \cd \frac{(m-1)!}{(m+n-1)!} \cd \int \T{Tr}(\sA) \\
& \q = \frac{(-1)^n}{(2 \pi i)^n \cd n!} \cd \int \T{Tr}(\sA).
\end{split}
\]
This proves the theorem.
\end{proof}

It is worthwhile to notice that the anomaly of the Dirac type operator $\C D$ does \emph{not} depend on the degree $m \in \{n,n+1,\ldots\}$. This is can be seen immediately from the above theorem.

\section{Non-triviality of the anomaly \label{nont}}
The aim of this section is to demonstrate that the anomaly of the Dirac type operator $\C D : H^1(\rr^{2n}) \ot \cc^{2^n} \to L^2(\rr^{2n}) \ot \cc^{2^n}$ associated with a $U(1)$-connection can be non-trivial. We shall thus assume that the functions $a_1,\ldots,a_{2n} : \rr^{2n} \to \rr$ satisfy the conditions in Assumption \ref{a:fct}.

Recall from Theorem \ref{t:locano} that the anomaly can be computed as the integral
\[
\T{Ano}_m(\C D) = \frac{(-1)^n}{(2 \pi i)^n \cd n!} \cd \int \sA \q m \in \{n,n+1,\ldots\},
\]
where the compactly supported $(2n)$-form $\sA \in \Om^{2n}_c(\rr^{2n})$ was defined in \eqref{eq:fordef}.
\vspace{3pt}

\emph{Suppose first that $n = 1$}. The $2$-form $\sA \in \Om^2_c(\rr^2)$ is then simply the curvature form of the $U(1)$-connection used for constructing the Dirac type operator $\C D$. See the end of Section \ref{s:anodir}. This curvature form is given by
\[
d\om = -i \frac{\pa a_1}{\pa x_2} dx_1 \we dx_2
\]
and the local formula can then be rewritten as follows:
\[
\T{Ano}_m(\C D) = \frac{1}{2\pi} \int \frac{\pa a_1}{\pa x_2} dx_1 \we dx_2 \q m \in \nn
\]

Let now $\al, \be \in \rr$ and choose a smooth function $h : \rr \to \rr$ such that the derivative $\frac{dh}{dt} : \rr \to \rr$ has compact support and such that $\lim_{t \to \infty} h(t) = \be$ and $\lim_{t \to -\infty}h(t) = \al$. Choose a smooth compactly supported function $\phi : \rr \to [0,\infty)$.

Suppose then that $a_1 : \rr^2 \to \rr$ is defined by $a_1 : (x_1,x_2) \mapsto h(x_2) \cd \phi(x_1)$ for all $(x_1,x_2) \in \rr^2$. We then have that
\[
\begin{split}
\T{Ano}_m(\C D) = \frac{1}{2\pi} \cd \int_{-\infty}^\infty \frac{dh}{dt} dt  \cd \int_{-\infty}^\infty \phi(x) dx
= \frac{(\be - \al)}{2\pi} \cd \int_{-\infty}^\infty \phi(x) dx,
\end{split}
\]
for all $m \in \nn$.
\vspace{3pt}

\emph{Suppose now that $n = 2$}. In this case, the curvature form is given by $d\om = \sum_{1 \leq i < j \leq 4} g_{i,j} dx_i \we dx_j$, where the coefficient functions are defined by
\[
g_{i,j} := i \cd \Big( \frac{\pa a_j}{\pa x_i} - \frac{\pa a_i}{\pa x_j} \Big) \in C^\infty_b(\rr^{2n}) \q 1 \leq i < j \leq 4. 
\]
The compactly supported $4$-form $\sA = \si_4(\G K^2) \in \Om^4_c(\rr^4)$ can therefore be written as
\[
\sA = 2 \cd (g_{1,2} \cd g_{3,4} - g_{1,3} \cd g_{2,4} + g_{1,4} \cd g_{2,3}) dx_1 \wlw dx_4.
\]
And the local formula for the anomaly yields the identity
\[
\T{Ano}_m(\C D) = -\frac{1}{4 \pi^2} \cd \int \big( g_{1,2} \cd g_{3,4} - g_{1,3} \cd g_{2,4} + g_{1,4} \cd g_{2,3} \big) dx_1 \wlw dx_4,
\]
for all $m \in \{2,3,\ldots\}$.

Let $h : \rr \to \rr$ be as above and let $\phi_1,\phi_3 : \rr^3 \to \rr$ be smooth compactly supported functions.

Suppose that $a_4 = a_2 = 0$ and that $a_1, a_3 : \rr^4 \to \rr$ are defined by
\[
\begin{split}
& a_1 : (x_1,\ldots,x_4) \mapsto h(x_4) \cd \phi_1(x_1,x_2,x_3) \q \T{and} \\
& a_3 : (x_1,\ldots,x_4) \mapsto h(x_4) \cd \phi_3(x_1,x_2,x_3)
\end{split}
\]
for all $(x_1,\ldots,x_4) \in \rr^4$. Let $m \in \{2,3,\ldots\}$. The anomaly is then given by
\[
\begin{split}
\T{Ano}_m(\C D)
& = -\frac{1}{4 \pi^2} \cd \int h \cd \frac{dh}{dt} \, dt
\cd \int \Big( \phi_1 \cd \frac{\pa \phi_3}{\pa x_2} - \frac{\pa \phi_1}{\pa x_2} \cd \phi_3   \Big) dx_1 dx_2 dx_3 \\
& = -\frac{(\be^2 - \al^2)}{8 \pi^2} \cd \int \Big( \frac{\pa (\phi_1 \cd \phi_3)}{\pa x_2}  - 2 \cd \frac{\pa \phi_1}{\pa x_2} \cd \phi_3 \Big) dx_1 dx_2 dx_3 \\
& = \frac{(\be^2 - \al^2)}{4 \pi^2} \cd \int \frac{\pa \phi_1}{\pa x_2} \cd \phi_3 dx_1 dx_2 dx_3,
\end{split}
\]
where the integral $\int \frac{\pa (\phi_1 \phi_3)}{\pa x_2} dx_1 dx_2 dx_3$ is trivial since $\phi_1 \cd \phi_3$ has compact support.

\bibliographystyle{amsalpha-lmp}

\newcommand{\etalchar}[1]{$^{#1}$}
\def\cprime{$'$}
\providecommand{\bysame}{\leavevmode\hbox to3em{\hrulefill}\thinspace}
\providecommand{\MR}{\relax\ifhmode\unskip\space\fi MR }
\providecommand{\MRhref}[2]{%
  \href{http://www.ams.org/mathscinet-getitem?mr=#1}{#2}
}
\providecommand{\href}[2]{#2}

\end{document}